\documentclass[reqno,12pt]{amsart}
\usepackage{amssymb}

\newtheorem{lemma}{Lemma}[section]
\newtheorem{theorem}[lemma]{Theorem}
\newtheorem{corollary}[lemma]{Corollary}
\newtheorem{remark}[lemma]{Remark}
\newtheorem{proposition}[lemma]{Proposition}
\newtheorem{definition}[lemma]{Definition}

\newcommand{\mN}{\mathbb{N}}

\newcommand{\mC}{\mathbb{C}}
\newcommand{\mR}{\mathbb{R}}

\def\Hom{\mathrm{Hom}}
\def\End{\mathrm{End}}
\def\grad{\mathrm{grad}}
\def\scal{\mathrm{scal}}
\def\im{\mathrm{im}\,}
\def\Re{\mathrm{Re}}
\def\SO{\mathrm{SO}}
\def\Spin{\mathrm{Spin}}
\def\rP{\mathrm{P}}

\def\dv{\mathrm{dv}}
\def\id{\mathrm{Id}}
\def\spann{\mathrm{span}}

\def\supp{\mathrm{supp}}

\def\geucl{g_{\mathrm{eucl}}}



\begin{document}
\title{Zero sets of eigenspinors for generic metrics}
\author{Andreas Hermann}
\date{}
\address{Institut f\"ur Mathematik \\
   Universit\"at Potsdam\\
   Am Neuen Palais 10\\
   14469 Potsdam \\
   Germany} 
\email{hermanna@uni-potsdam.de}
\keywords{Dirac operator; eigenspinors; 
harmonic spinors; zero set}
\subjclass[2010]{53C27, 58J05, 35J08}

\begin{abstract}
Let $M$ be a closed connected spin manifold of dimension 
$2$ or $3$ with a fixed orientation and a fixed spin structure. 
We prove that for a generic Riemannian metric on $M$ the non-harmonic 
eigenspinors of the Dirac operator are nowhere zero. 
The proof is based on a transversality theorem and the unique continuation 
property of the Dirac operator. 
\end{abstract}

\maketitle

\tableofcontents

\section{Introduction}

The spectrum of the Dirac operator can be 
computed explicitly for some closed Riemannian 
spin manifolds. 
In these examples one very often has eigenvalues 
of high multiplicities 
(see e.\,g.\,\cite{baer:96}, \cite{friedrich:84}). 
However it is also known that for a generic 
choice of metric on a closed spin manifold 
of dimension~$2$ or~$3$ 
the eigenvalues are simple (see \cite{dahl:03}). 
In the present article we extend this result by 
showing that for a generic metric the non-harmonic 
eigenspinors are nowhere zero. 
More precisely let~$M$ be a closed spin manifold 
and denote by~$R(M)$ the space of all 
smooth Riemannian metrics on~$M$
equipped with the $C^1$-topology.  
For every~$g\in R(M)$ denote by~$D^g$ the Dirac 
operator acting on spinors for the metric~$g$. 
Let~$N(M)$ be the subset of all~$g\in R(M)$ 
such that all the eigenspinors 
of~$D^g$ are nowhere zero on~$M$ 
and let~$N^*(M)$ be the subset of all 
$g\in R(M)$ such that all the non-harmonic 
eigenspinors of~$D^g$ are nowhere zero on~$M$. 
Then we prove the following. 

\begin{theorem}
\label{main_theorem}
Let $M$ be a closed connected spin manifold 
with a fixed orientation and a fixed spin 
structure. 
If $\dim M=2$ then~$N^*(M)$ is residual in~$R(M)$. 
If $\dim M=3$ then~$N(M)$ is residual in~$R(M)$. 
\end{theorem}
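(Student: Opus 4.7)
The plan is to combine Dahl's theorem \cite{dahl:03} on the generic simplicity of the Dirac spectrum with a parametric transversality argument whose non-degeneracy is supplied by the unique continuation property of the Dirac operator. Let $R_s(M) \subset R(M)$ denote the residual set of metrics for which every eigenvalue of $D^g$ is simple. Around any $g_0 \in R_s(M)$ one can choose an open neighborhood $U \subset R(M)$ and, for each simple eigenvalue $\lambda_0$ of $D^{g_0}$, smoothly follow $\lambda_0$ and a normalized eigenspinor as functions $\lambda(g)$ and $\psi^g$ on $U$, after identifying the spinor bundles for distinct metrics via the Bourguignon--Gauduchon construction.

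The heart of the proof is a transversality statement for the evaluation map
\[
F \colon U \times M \longrightarrow \Sigma M, \qquad F(g,x) = \psi^g(x),
\]
where $\Sigma M$ denotes the (canonically identified) spinor bundle, of complex rank~$2$ and hence real rank~$4$. Once $F$ is known to be transverse to the zero section of $\Sigma M$, the parametric transversality theorem (equivalently, the Sard--Smale theorem applied to the projection from $F^{-1}(\text{zero section})$ to $U$) yields that for $g$ in a residual subset of $U$ the section $x \mapsto \psi^g(x)$ is itself transverse to the zero section. Since the real rank of $\Sigma M$ is $4$ and $\dim M \in \{2,3\}$, transversality to zero forces the section to be nowhere zero. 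Covering $R_s(M)$ by countably many such neighborhoods and intersecting over all eigenvalue indices produces a residual subset of $R(M)$ with the asserted property.

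The main obstacle is the pointwise verification of transversality, and this is where unique continuation enters. Suppose $\psi^{g_0}(x_0) = 0$ and fix $\phi \in (\Sigma M)_{x_0}$; one must produce a symmetric $2$-tensor $h$ supported near $x_0$ with $\tfrac{d}{dt}\big|_{t=0} \psi^{g_0 + th}(x_0) = \phi$. Differentiating the equation $D^{g_0 + th}\psi^{g_0 + th} = \lambda(g_0 + th)\psi^{g_0 + th}$ at $t=0$ gives a linear inhomogeneous equation for $\dot\psi$ in terms of $h$, $\psi^{g_0}$, $\lambda_0$ and $\dot\lambda$, which is solvable using a Green's operator for $D^{g_0} - \lambda_0$ on the $L^2$-orthogonal complement of $\psi^{g_0}$. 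If some $\phi$ were unreachable, dualising would produce a non-trivial spinor field $\Phi$ satisfying $\int_M \langle \Phi,\dot D\,\psi^{g_0}\rangle\,\dv_{g_0} = 0$ for every admissible $h$; integrating by parts in $h$ yields a local algebraic identity between $\Phi$, $\psi^{g_0}$ and their first derivatives, and combined with the unique continuation property of $D^{g_0}$ this forces $\psi^{g_0}$ to vanish on an open set and hence identically, contradicting its normalization.

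The asymmetry between dimensions~$2$ and~$3$ arises only at the eigenvalue $\lambda = 0$. In dimension~$2$, a harmonic spinor on a closed surface of genus $\ge 2$ corresponds to a holomorphic section of $K^{1/2}$, whose positive degree forces it to vanish somewhere; hence harmonic eigenspinors cannot in general be made nowhere zero, so the statement must be restricted to $\lambda \ne 0$, giving only $N^*(M)$. In dimension~$3$ there is no such topological obstruction and the same transversality-plus-unique-continuation argument applies uniformly across all eigenvalues, yielding the stronger conclusion that $N(M)$ is residual.
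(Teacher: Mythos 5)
Your overall skeleton (Dahl's generic simplicity, a parametrized family of eigenspinors, parametric transversality, the dimension count forcing empty zero sets, and unique continuation as the source of non-degeneracy) matches the paper, but the step that carries essentially all of the difficulty is not proved and is misdescribed. If transversality fails at a zero $p$ of $\psi$, the obstruction is a single vector $\varphi\in\Sigma^g_pM$, not a spinor field on $M$, and the way to turn the condition $\Re\langle\frac{d}{dt}\psi_t|_{t=0}(p),\varphi\rangle=0$ (for all admissible deformations) into usable information is to pair the linearized eigenvalue equation with the Green's function $G^g_{\lambda}(\cdot,p)\varphi$ of $D^g-\lambda$ with pole at $p$. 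For conformal deformations this yields $\Re\langle\psi,G^g_{\lambda}(\cdot,p)\varphi\rangle\equiv 0$ on $M\setminus\{p\}$, and crucially the coefficient in front of this identity is $\lambda/2$, so the argument already dies at $\lambda=0$. The paper then has to construct $G^g_{\lambda}$ and compute its expansion at the singularity (all of Section~\ref{green_chapter}), and to run a long inductive Taylor-coefficient analysis at $p$ (Lemma~\ref{lemma_transverse}) showing that the identity forces $\nabla^rA\psi(0)=0$ for all $r$; only then does Aronszajn's theorem, applied to $(D^g)^2-\lambda^2$, give $\psi\equiv 0$. Your sketch replaces all of this by ``integrating by parts in $h$ yields a local algebraic identity \dots unique continuation forces $\psi$ to vanish on an open set'', which is not how unique continuation can be invoked here (one needs vanishing to infinite order at the point $p$, and producing that is the hard part); moreover this inductive step is exactly where $\dim M\in\{2,3\}$ enters (Remark~\ref{remark_higher_dim} shows the same algebra fails for $n\geq 4$), a restriction your argument never uses.

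Two further points would need repair. First, your claim that in dimension~$3$ ``the same argument applies uniformly across all eigenvalues'' including $\lambda=0$ is unsupported: the transversality mechanism genuinely requires $\lambda\neq 0$, and the paper instead handles dimension~$3$ by quoting that generically $\ker(D^g)=\{0\}$ (Maier, Ammann--Dahl--Humbert), so only non-harmonic eigenspinors need to be controlled in both dimensions. Second, you work with arbitrary symmetric $2$-tensor perturbations and implicitly an infinite-dimensional parameter space; the paper deliberately restricts to finite-dimensional families of smooth conformal factors $V_{f_1\dots f_s}$, both because the $C^k$-metric Banach-space version would produce eigenspinors of insufficient regularity for the unique continuation argument, and because the conformal variation formula is what produces the clean $\lambda\int f\,\Re\langle\psi,G^g_{\lambda}(\cdot,p)\varphi\rangle\,\dv^g$ identity. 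If you want to keep general (non-conformal) deformations you must redo the dualization and the Green's-function analysis for that larger class, which your proposal does not do.
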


Recall that a subset is residual, if it contains 
a countable intersection of open and dense sets. 
The proof will show that for every~$g\in R(M)$ 
the intersection of~$N^*(M)$ 
with the conformal class~$[g]$ of~$g$ is 
residual in~$[g]$ 
(see Theorem \ref{theorem_no_zeros}). 
In the case~$\dim M=3$ we use the fact that 
the subset of all~$g\in R(M)$ 
with~$\ker(D^g)=\{0\}$ is open and dense in~$R(M)$ 
(see \cite{maier:97}, \cite{ammann.dahl.humbert:11}). 
Furthermore for~$\dim M=2$ the subset~$N(M)$ is 
in general not residual in~$R(M)$. 
Namely in Section~\ref{surfaces_section} 
we will give examples 
of closed surfaces~$M$ such that for 
every metric~$g$ on~$M$ there exist harmonic 
spinors of~$D^g$ with non-empty zero set. 
The proof will also show that 
Theorem~\ref{main_theorem} is true with 
respect to every $C^k$-topology, $k\geq1$, 
on~$R(M)$. 
In order to simplify the notation we will 
state all results using the $C^1$-topology.

In analysis, geometry and mathematical physics 
the study of zero sets of solutions to 
generalized Dirac equations is very important.   
On closed manifolds these zero sets have 
codimension~$2$ at least by a result of B\"ar 
(see \cite{baer:97}). 
There are many open questions in the literature 
concerning these zero sets. 
For example Nester and his co-authors 
ask whether on a generic 
asymptotically Euclidean manifold of dimension~$3$ 
one can find a Witten spinor without zeros 
(see e.\,g.\,\cite{frauendiener.nester.szabados:11}). 
As another example we mention the question 
raised by Ammann, whether for a generic Riemannian 
metric on a closed spin manifold 
of dimension~$n\geq2$ one can find 
a nowhere vanishing spinor~$\psi$ satisfying 
a non-linear equation of the form 
\begin{displaymath}
D^g\psi=C|\psi|^{2/(n-1)}\psi,\quad C>0
\textrm{ constant},
\end{displaymath}
(see \cite{ammann:09}, \cite{ammann.moroianu.moroianu:13}). 
We hope that some of the techniques developed in 
this article can be applied to these questions. 
However we note that in both cases one has 
to consider non-conformal deformations of the 
metric, while in this article we consider mainly 
conformal deformations.

This article is organized as follows. 
In Section~\ref{identification_section} we 
give a short review of spin geometry in order 
to fix the notation and we recall a method 
to compare spinors for different metrics. 
In Section~\ref{further_prelim_section} 
we state a transversality theorem from differential 
topology and Aronszajn's unique 
continuation theorem. 
These are the main tools for the proof of 
Theorem~\ref{main_theorem}. 
We also need the expansion of Green's function 
for the operator~$D^g-\lambda$ at the singularity 
with~$\lambda\in\mR$. 
This expansion is derived in 
Section~\ref{green_chapter}. 
Then in Section~\ref{gen_eigenspinor_section}
we prove Theorem \ref{main_theorem}. 
We first construct a continuous 
map~$F$ which assigns to 
every Riemannian metric~$h$ in an open neighborhood 
of a fixed metric~$g$ an eigenspinor 
of~$D^h$ viewed as a section of~$\Sigma^gM$. 
Theorem \ref{main_theorem} follows from 
the transversality theorem if we can prove that 
the evaluation map corresponding to~$F$ 
is transverse to the zero section of~$\Sigma^gM$. 
Assuming that this is not the case 
we obtain an equation involving Green's function 
for the operator~$D^g-\lambda$ with $\lambda\in\mR$. 
Using the expansion of Green's function 
and using that~$\lambda\neq0$ and 
that~$\dim M\in\{2,3\}$ 
we obtain a contradiction from the 
unique continuation theorem. 
Finally, in Section \ref{surfaces_section} 
we give an example showing that Theorem 
\ref{main_theorem} 
does not hold for harmonic spinors on 
closed surfaces.

\textbf{Acknowledgments}
The author thanks Bernd Ammann and Mattias 
Dahl for many interesting discussions.

\section{Preliminaries}

\subsection{Review of spin geometry}
\label{identification_section}

Let $(M,g)$ be an oriented Riemannian spin manifold 
of dimension~$n$. We denote 
by~$\rP_{\SO}(M,g)$ the principal~$\SO(n)$-bundle of 
positively oriented 
$g$-orthonormal frames. 
A spin structure on~$(M,g)$ consists of a principal 
$\Spin(n)$-bundle 
$\rP_{\Spin}(M,g)$ and a two-fold covering 
\begin{displaymath}
\Theta:\quad\rP_{\Spin}(M,g)\to\rP_{\SO}(M,g),
\end{displaymath} 
which is compatible with the group actions 
of~$\SO(n)$ on~$\rP_{\SO}(M,g)$ 
and of~$\Spin(n)$ on~$\rP_{\Spin}(M,g)$. 
We will always assume that a spin manifold has a 
fixed orientation and a fixed 
spin structure. 
If in addition a Riemannian metric~$g$ on~$M$ is 
chosen, we denote the 
Riemannian spin manifold by~$(M,g,\Theta)$. 
The spinor bundle is defined as the associated 
vector bundle 
$\Sigma^gM:=\rP_{\Spin}(M,g)\times_{\rho}\Sigma_n$, 
where~$\rho$ is 
the spinor representation on the complex vector 
space~$\Sigma_n$ 
of dimension~$2^{[n/2]}$. 
We will denote by~$\langle.,.\rangle$ 
the usual Hermitian inner product on~$\Sigma^gM$. 
The Levi-Civita connection on~$(M,g)$ induces 
a connection on~$\Sigma^gM$ 
denoted by~$\nabla^g$. 
For every~$p\in M$ the Clifford multiplication by 
tangent 
vectors in~$p$ will be denoted by 
\begin{displaymath}
T_pM\otimes\Sigma^g_pM\to\Sigma^g_pM,
\quad X\otimes\psi\mapsto X\cdot\psi.
\end{displaymath}
The Dirac operator~$D^g$ is a linear elliptic 
differential 
operator of first order acting on smooth sections 
of~$\Sigma^gM$: 
\begin{displaymath}
  D^g:\quad C^{\infty}(\Sigma^gM)
  \to C^{\infty}(\Sigma^gM).
\end{displaymath}
If $(e_i)_{i=1}^{n}$ is a local $g$-orthonormal 
frame of~$TM$,
then the Dirac operator is locally given by 
\begin{displaymath}
D^g\psi=\sum_{i=1}^{n}e_i\cdot\nabla^g_{e_i}\psi. 
\end{displaymath}
Furthermore for all 
$\psi\in C^{\infty}(\Sigma^gM)$ and for all 
$f\in C^{\infty}(M,\mC)$ we have 
\begin{displaymath}
D^g(f\psi)=\grad^g(f)\cdot\psi+fD^g\psi
\end{displaymath}
(see \cite{lawson.michelsohn:89}, p.\,116). 
For a detailed introduction to the concepts of 
spin geometry which we use here 
see \cite{lawson.michelsohn:89} or \cite{friedrich:00}. 

Let $(M,\Theta)$ be a closed spin manifold 
and let~$g,h\in R(M)$. 
Then the spinor bundles~$\Sigma^gM$ and~$\Sigma^hM$ 
are two different vector bundles. 
The problem of identifying spinors 
and the Dirac operators~$D^g$ and~$D^h$ 
for the metrics~$g$ and~$h$ has been treated 
in the literature (see \cite{hitchin:74}, \cite{hijazi:86}, 
\cite{bourguignon.gauduchon:92}, \cite{maier:97}) with the following result: 
There exists an isomorphism of vector bundles 
\begin{displaymath}
\beta_{g,h}:\quad 
\Sigma^gM\to\Sigma^hM
\end{displaymath}
which is a fiberwise isometry with respect to 
the inner products on~$\Sigma^gM$ and~$\Sigma^hM$. 
There also exists an isomorphism of vector bundles 
\begin{displaymath}
\overline{\beta}_{g,h}:\quad 
\Sigma^gM\to\Sigma^hM
\end{displaymath}
which induces an isometry of Hilbert spaces 
$L^2(\Sigma^gM)\to L^2(\Sigma^hM)$. 
It is obtained from~$\beta_{g,h}$ by pointwise 
multiplication with a positive function which takes 
into account the change of the volume form. 
We have~$\overline{\beta}_{h,g}
=\overline{\beta}_{g,h}^{-1}$. 
By this isomorphism the Dirac operator~$D^h$ 
can be regarded as a differential operator 
acting on spinors for the metric~$g$. 
More precisely one defines 
\begin{displaymath}
D^{g,h}:=\overline{\beta}_{h,g} D^h 
\overline{\beta}_{g,h}:\quad 
C^{\infty}(\Sigma^gM)\to
C^{\infty}(\Sigma^gM).
\end{displaymath}
This operator has a self-adjoint closure 
in~$L^2(\Sigma^gM)$. 

For all $g,h\in R(M)$ 
there exists an open interval~$I$ 
containing $[0,1]$ such that for every~$t\in I$ 
the tensor field~$g_t:=g+t(h-g)$ is a 
Riemannian metric on~$M$. 
This family of Riemannian metrics on~$M$ 
induces real analytic families of eigenvalues and 
eigenspinors of 
the family of operators~$(D^{g,g_t})_{t\in I}$ 
(see \cite{bourguignon.gauduchon:92}). 
Namely by a theorem of Rellich (see Thm. VII.3.9 in 
\cite{kato:95}) 
we have the following lemma. 

\begin{lemma}
\label{rellich_lemma}
Let $\lambda$ be an eigenvalue of $D^g$ with 
$d:=\dim_{\mC}\ker(D^g-\lambda)$. 
Then there exist real analytic functions 
\begin{displaymath}
I\ni t\mapsto\lambda_{j,t}\in\mR,\quad 
1\leq j\leq d,
\end{displaymath}
such that for every~$j$ and every~$t$ the 
number~$\lambda_{j,t}$ is an eigenvalue 
of~$D^{g,g_t}$ and such 
that~$\lambda_{j,0}=\lambda$ for every~$j$. 
Furthermore there are 
spinors~$\psi_{j,t}\in C^{\infty}(\Sigma^gM)$, 
$1\leq j\leq d$, $t\in I$, 
which are real analytic in~$t$, such that for 
every~$t\in I$ 
the spinors~$\psi_{j,t}$, $1\leq j\leq d$, form 
an $L^2$-orthonormal system 
and for every~$t\in I$ and for every~$j$ the 
spinor~$\psi_{j,t}$ 
is an eigenspinor of~$D^{g,g_t}$ corresponding 
to~$\lambda_{j,t}$. 
\end{lemma}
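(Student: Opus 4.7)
The plan is to reduce Lemma~\ref{rellich_lemma} to a direct application of the Kato-Rellich theorem (Thm.~VII.3.9 in \cite{kato:95}) for analytic families of type~(A) of self-adjoint operators on the fixed Hilbert space $L^2(\Sigma^gM)$. That theorem asserts that, for such a family, the eigenvalues can be labelled by globally defined real-analytic functions of the parameter and the associated eigenvectors can be chosen analytically as well; what needs to be checked is that $(D^{g,g_t})_{t\in I}$ really fits into this framework.

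First I would verify that $t\mapsto D^{g,g_t}$ is an analytic family of type~(A) on $L^2(\Sigma^gM)$. The metric $g_t=g+t(h-g)$ is polynomial in $t$ and stays in $R(M)$ for $t\in I$, so the endomorphism of $TM$ obtained by raising one index of $g_t$ by means of $g$ is positive definite and polynomial in $t$. Its positive symmetric square root therefore depends real-analytically on~$t$, and the Bourguignon-Gauduchon construction of $\overline{\beta}_{g,g_t}$ together with the volume-form rescaling yields a family of smooth bundle isomorphisms $\Sigma^gM\to\Sigma^{g_t}M$ which is real-analytic in~$t$ (see \cite{bourguignon.gauduchon:92}). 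Writing $D^{g,g_t}=\overline{\beta}_{g_t,g}D^{g_t}\overline{\beta}_{g,g_t}$ in a local $g$-orthonormal frame and using that the Christoffel symbols of $g_t$ are real-analytic in $t$, one obtains a first-order elliptic differential operator on $\Sigma^gM$ whose coefficients depend real-analytically on~$t$. Each $D^{g,g_t}$ has a self-adjoint closure in $L^2(\Sigma^gM)$ on the common domain $H^1(\Sigma^gM)$, which is precisely the type~(A) hypothesis.

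Next I would apply the Kato-Rellich theorem to the isolated eigenvalue $\lambda$ of $D^{g,g_0}=D^g$ of multiplicity~$d$. This yields $d$ real-analytic branches $t\mapsto\lambda_{j,t}$ with $\lambda_{j,0}=\lambda$, together with an $L^2$-orthonormal system $(\psi_{j,t})_{j=1}^{d}$ of corresponding eigenspinors depending real-analytically on~$t$ as $L^2$-sections of $\Sigma^gM$. Elliptic regularity applied to $(D^{g,g_t}-\lambda_{j,t})\psi_{j,t}=0$ gives $\psi_{j,t}\in C^\infty(\Sigma^gM)$ for each~$t$; a standard bootstrap using the parameter-dependent elliptic a priori estimates together with the real-analytic dependence of the coefficients and of the eigenvalues on~$t$ promotes the $L^2$-analyticity of $t\mapsto\psi_{j,t}$ to analyticity in every $C^k$-norm, hence in $C^\infty(\Sigma^gM)$.

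The main technical point is the verification that the identification $\overline{\beta}_{g,g_t}$ depends real-analytically on~$t$, since once this is granted the analyticity of $D^{g,g_t}$ and the type~(A) structure follow by composition and local coordinate computations. This is the substance of the statement attributed to Bourguignon-Gauduchon; the rest of the lemma is then a direct citation of Rellich's theorem in the form given by Kato.
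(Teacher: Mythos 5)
Your proposal follows essentially the same route as the paper, which likewise obtains the lemma by noting (via Bourguignon--Gauduchon) that $t\mapsto D^{g,g_t}$ is a self-adjoint family with coefficients depending real-analytically on $t$ on the fixed space $L^2(\Sigma^gM)$ and then citing Rellich's analytic perturbation theorem in the form of Thm.~VII.3.9 in Kato. Your additional remarks on the type~(A) verification and the elliptic-regularity bootstrap for smoothness and $C^k$-analyticity of the eigenspinors are correct fillings-in of details the paper leaves to the references.
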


For all $j\in\{1,...,d\}$ and for all $t\in I$ 
we have $\lambda_{j,t}
=(\psi_{j,t},D^{g,g_t}\psi_{j,t})_{L^2}$. 
By taking the derivative at~$t=0$ and using the fact 
that~$D^g$ is self-adjoint and that for every~$t$ 
we have~$\|\psi_{j,t}\|_{L^2}=1$ we obtain 
\begin{equation}
\label{lambda_deriv_general}
\frac{d\lambda_{j,t}}{dt}\big|_{t=0}
=\int_M\langle\psi_{j,0},
\frac{d}{dt}D^{g,g_t}\big|_{t=0}
\psi_{j,0}\rangle\,\dv^g,\quad 1\leq j\leq d.
\end{equation}

If $g$ and $h$ are conformally related, 
i.\,e.\,$h=e^{2u}g$ 
with~$u\in C^{\infty}(M,\mR)$, 
then for all~$\psi\in C^{\infty}(\Sigma^gM)$ 
we have 
\begin{displaymath}
  D^h(e^{-(n-1)u/2}\beta_{g,h}\psi)=e^{-(n+1)u/2}\beta_{g,h}D^g\psi
\end{displaymath}
(see e.\,g.\,\cite{hijazi:86}). 
Since 
$\overline{\beta}_{g,h}=e^{-nu/2}\beta_{g,h}$ 
we have for all 
$\psi\in C^{\infty}(\Sigma^gM)$ 
\begin{displaymath}
  D^{g,h}\psi=e^{-u/2}D^g(e^{-u/2}\psi).
\end{displaymath}

Let $f\in C^{\infty}(M,\mR)$ and let~$I\subset\mR$ 
be an open interval 
containing~$0$ such that for every~$t\in I$ 
the tensor field~$g_t:=g+tfg$ 
is a Riemannian metric on~$M$. 
Then for all~$\psi\in C^{\infty}(\Sigma^gM)$ we have 
\begin{displaymath}
  D^{g,g_t}\psi=(1+tf)^{-1/4}D^g((1+tf)^{-1/4}\psi)
\end{displaymath}
and therefore 
\begin{equation}
  \label{dirac_derivative_conform}
  \frac{d}{dt}D^{g,g_t}\big|_{t=0}\psi=
  -\frac{1}{2}fD^g\psi-\frac{1}{4}\grad^g(f)
  \cdot\psi,
\end{equation}
where the Clifford multiplication is taken with 
respect to the metric~$g$. 
Using this formula in (\ref{lambda_deriv_general}) 
and taking the real part we obtain 
\begin{equation}
\label{lambda_derivative_conform}
\frac{d\lambda_{j,t}}{dt}\big|_{t=0}
=-\frac{\lambda}{2}\int_M f|\psi_{j,0}|^2\,\dv^g,
\quad 1\leq j\leq d.
\end{equation}

\subsection{Further preliminaries}
\label{further_prelim_section}

In this section we briefly recall a 
trans\-ver\-sa\-lity 
theorem from differential topology and a 
unique continuation theorem 
for generalized Laplace operators acting on 
sections of a vector bundle. 

\begin{definition}
Let $f$: $Q\to N$ be a $C^1$ map between two 
manifolds. 
Let~$A\subset N$ be a submanifold. 
$f$ is called transverse to~$A$, if for 
all~$x\in Q$ with~$f(x)\in A$ 
we have
\begin{displaymath}
  T_{f(x)}A+\im(df|_x)=T_{f(x)}N.
\end{displaymath}
\end{definition}

We quote the following transversality theorem 
from \cite{hirsch:76}, \cite{uhlenbeck:76}. 

\begin{theorem}
\label{param_transvers}
Let $V$, $M$, $\Sigma$ be smooth manifolds and 
let~$A\subset\Sigma$ be a smooth submanifold. 
Let $F$: $V\to C^r(M,\Sigma)$ be a map, 
such that the evaluation map 
\begin{displaymath}
F^{ev}:\quad 
V\times M\to\Sigma,\quad 
(v,m)\mapsto F(v)(m)
\end{displaymath}
is $C^r$ and transverse to $A$. 
If 
\begin{displaymath}
  r>\max\{0,\dim M+\dim A-\dim\Sigma\},
\end{displaymath}
then the set of all $v\in V$, 
such that the map~$F(v)$ is transverse to~$A$, 
is residual and therefore dense in~$V$.
\end{theorem}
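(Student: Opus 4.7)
The plan is to prove the theorem by the classical Sard-theoretic argument that reduces parametric transversality to regularity of a projection map off an incidence submanifold.

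First, I would form the incidence set $W := (F^{ev})^{-1}(A) \subset V \times M$. Since $F^{ev}$ is $C^r$ and transverse to $A$, the preimage theorem realizes $W$ as a $C^r$ submanifold of $V \times M$ of codimension $\dim\Sigma - \dim A$, so in particular
\[
\dim W - \dim V = \dim M + \dim A - \dim\Sigma.
\]
Let $\pi: W \to V$ denote the restriction of the first-factor projection. The heart of the argument is the following characterization: under the standing hypothesis that $F^{ev}$ is transverse to $A$, a point $v \in V$ is a regular value of $\pi$ if and only if $F(v):M\to\Sigma$ is transverse to $A$.

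To establish this, fix $(v,m)\in W$ and decompose $dF^{ev}|_{(v,m)}(X,Y) = L(X) + dF(v)|_m(Y)$, where $L:T_vV \to T_{F(v)(m)}\Sigma$ is the partial derivative in the $V$-direction. The tangent space $T_{(v,m)}W$ is the set of pairs $(X,Y)$ with $L(X) + dF(v)|_m(Y) \in T_{F(v)(m)}A$, and $d\pi|_{(v,m)}$ is surjective exactly when every $X \in T_vV$ admits some $Y\in T_mM$ with $L(X) \in T_{F(v)(m)}A + \im(dF(v)|_m)$. On the other hand, transversality of $F^{ev}$ at $(v,m)$ reads $\im(L) + \im(dF(v)|_m) + T_{F(v)(m)}A = T_{F(v)(m)}\Sigma$. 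A short linear-algebraic computation (passing to the quotient $T_{F(v)(m)}\Sigma / T_{F(v)(m)}A$, on which $L$ and $dF(v)|_m$ jointly span everything) shows that surjectivity of $d\pi|_{(v,m)}$ at every $m\in\pi^{-1}(v)$ is equivalent to $\im(dF(v)|_m) + T_{F(v)(m)}A = T_{F(v)(m)}\Sigma$ at every such $m$, i.e. to transversality of $F(v)$ to $A$.

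Finally, I would apply Sard's theorem to the $C^r$ map $\pi:W\to V$. The hypothesis $r > \max\{0,\dim M + \dim A - \dim\Sigma\} = \max\{0,\dim W - \dim V\}$ is precisely the regularity condition required by the $C^r$-version of Sard's theorem, yielding that the set of regular values of $\pi$ is the complement of a meager set in $V$, hence residual. Combined with the characterization above, this residual set coincides with $\{v\in V : F(v)\text{ is transverse to }A\}$, and density follows because $V$ is a Baire space.

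The main obstacle is the linear-algebraic equivalence in the second step: one has to control three subspaces of $T_{F(v)(m)}\Sigma$ simultaneously and exploit the global transversality of $F^{ev}$ to extract pointwise transversality of $F(v)$ at every $m$ in the fiber over $v$. Once this is in hand, Step 1 and Sard's theorem combine mechanically to yield the statement.
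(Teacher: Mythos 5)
The paper does not prove Theorem~\ref{param_transvers}; it is quoted verbatim from Hirsch's \emph{Differential Topology} and Uhlenbeck's \emph{Generic Properties of Eigenfunctions}, so there is no in-paper argument to compare against. Your proposal is the standard proof of the parametric transversality theorem and is correct in all essentials: forming the incidence manifold $W=(F^{ev})^{-1}(A)$ via the preimage theorem, the linear-algebraic characterization of regular values of the projection $\pi\colon W\to V$ as exactly those $v$ for which $F(v)$ is transverse to $A$, and the application of the $C^r$ Sard theorem under the stated differentiability hypothesis. One small point worth making explicit if you were to write this out in full: Sard's theorem directly gives that the critical values have measure zero, which yields density but not immediately residuality; to get residuality you exhaust $W$ by countably many compact sets $K_n$, observe that the regular values of $\pi|_{K_n}$ form an open set (the critical values of $\pi|_{K_n}$ form a compact, hence closed, set) which is dense by Sard, and then take the countable intersection. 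You gesture at this with ``complement of a meager set,'' but the compact-exhaustion step is the mechanism and deserves a sentence. With that addition the argument matches what one finds in the cited sources.
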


Let $(M,g)$ be a Riemannian manifold and 
let~$\Sigma$ be a vector bundle over~$M$ 
with a connection~$\nabla$. 
Then the connection Laplacian 
\begin{displaymath}
  \nabla^*\nabla:\quad C^\infty(\Sigma)\to C^\infty(\Sigma)
\end{displaymath}
is a linear elliptic differential operator of 
second order. 
In terms of a local~$g$-orthonormal 
frame~$(e_i)_{i=1}^n$ of~$TM$ 
it is given by 
\begin{equation}
  \label{laplacian_local}
  \nabla^*\nabla\psi=-\sum_{i=1}^n \nabla_{e_i}\nabla_{e_i}\psi
  +\sum_{i=1}^n \nabla_{\nabla_{e_i}e_i}\psi.
\end{equation}
We will use the following unique continuation theorem 
due to Aronszajn (\cite{aronszajn:57}, quoted from \cite{baer:97}).

\begin{theorem}
\label{unique_cont}
Let $(M,g)$ be a connected Riemannian manifold and 
let~$\Sigma$ be a vector 
bundle over~$M$ with a connection~$\nabla$. 
Let~$P$ be an operator of 
the form~$P=\nabla^*\nabla+P_1+P_0$ acting on sections 
of~$\Sigma$, where~$P_1$,~$P_0$ are differential 
operators of order~$1$ and~$0$ respectively. 
Let~$\psi$ be a solution to~$P\psi=0$. 
If there exists a point, at 
which~$\psi$ and all derivatives of~$\psi$ of 
any order vanish, then~$\psi$ vanishes identically.
\end{theorem}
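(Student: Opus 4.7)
The plan is to reduce the global statement to a local Carleman estimate and then invoke connectedness. Let $Z \subset M$ denote the set of points at which $\psi$ vanishes to infinite order. By hypothesis $Z$ is nonempty, and it is automatically closed since for each $k$ the condition ``$\nabla^k\psi$ vanishes at $p$'' is closed. Since $M$ is connected, it suffices to prove that $Z$ is open; then $Z=M$, which together with the smoothness of $\psi$ and its vanishing to infinite order everywhere forces $\psi \equiv 0$.

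To establish openness of $Z$, fix $x_0 \in Z$, choose normal coordinates centered at $x_0$ on a small ball, and trivialize $\Sigma$ over this chart. In these coordinates the operator takes the form
\[
Pu = -\delta^{ij}\partial_i\partial_j u + (\text{smooth }O(|x|)\text{ second-order terms}) + B^j(x)\partial_j u + C(x)u,
\]
with $B^j, C$ smooth matrix-valued. The heart of the argument is a weighted Carleman-type inequality: for a suitable radial weight $\phi$ (e.g.\ $\phi(x) = -\log|x|$, or $\phi(x)=|x|^{-\alpha}$ with small $\alpha>0$), there exist $r_0 > 0$, $C > 0$ and $\tau_0$ such that
\[
\tau^3 \int e^{2\tau\phi}|u|^2 + \tau \int e^{2\tau\phi}|\nabla u|^2 \leq C \int e^{2\tau\phi}|Pu|^2
\]
for all $u \in C^\infty_c(B_{r_0}\setminus\{0\})$ and all $\tau \geq \tau_0$. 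One proves this by substituting $v = e^{\tau\phi}u$, splitting the conjugated operator into its formally symmetric and antisymmetric parts, and computing the commutator; the powers of $\tau$ come from pseudoconvexity of $\phi$, and the lower-order terms $P_1$ and $P_0$ are absorbed on the left by choosing $\tau$ large.

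The final step applies this to $\chi\psi$, where $\chi$ is a cutoff that equals $1$ on $B_{r_0/2}$ and is supported in $B_{r_0}$, after first truncating away from the origin using the infinite-order vanishing of $\psi$ at $x_0$ to justify the inclusion in $C^\infty_c(B_{r_0}\setminus\{0\})$. Since $P\psi = 0$, one has $P(\chi\psi) = [P,\chi]\psi$, a first-order expression supported in the annulus $\{r_0/2 \leq |x| \leq r_0\}$ where $\phi$ is bounded above by some constant $M_0$. The Carleman inequality therefore yields
\[
\tau^3 \int_{B_{r_0/2}} e^{2\tau\phi}|\psi|^2 \leq C' e^{2\tau M_0},
\]
and since $\phi(x) \to \infty$ as $x \to x_0$, letting $\tau\to\infty$ forces $\psi \equiv 0$ on some smaller ball around $x_0$. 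Thus $x_0$ lies in the interior of $Z$, and openness of $Z$ is proved.

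The main obstacle is the Carleman estimate itself: choosing the weight $\phi$ with the right pseudoconvexity properties relative to the principal symbol $|\xi|^2$, and carrying out the integration by parts carefully enough to extract a clean gain of $\tau^3$ (respectively $\tau$) for the $L^2$ norm of $u$ (respectively of $\nabla u$). Once that estimate is in hand, the rest of the argument is a standard cutoff-and-limit procedure that uses only the infinite-order vanishing of $\psi$ at $x_0$ and the fact that the perturbations $P_1$, $P_0$ are of strictly lower order than the principal part $\nabla^*\nabla$.
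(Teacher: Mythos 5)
The paper does not prove this statement; it is imported verbatim from the literature, attributed to Aronszajn's 1957 paper and quoted in the form given by B\"ar (1997). So there is no ``paper's proof'' to compare against.

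Your sketch follows the now-standard Carleman-weight route to strong unique continuation, and that is indeed the technique underlying Aronszajn's theorem, so the strategy is correct. The topological reduction is sound: the set $Z$ of infinite-order zeros is closed as an intersection of closed conditions, and on a connected manifold it suffices to show $Z$ is open, since $Z = M$ then trivially forces $\psi \equiv 0$. The openness step via a weighted estimate plus the cutoff-and-limit argument is the right blueprint, and you correctly identify the Carleman estimate itself as the crux.

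That said, the proposal is a reduction rather than a proof, and the one piece you do not supply is precisely the hard part. For strong unique continuation from a \emph{point} (as opposed to across a hypersurface), the weight must be singular at the origin, and there are genuine obstructions: with $\phi(x) = -\log|x|$ the conjugated operator develops spherical-harmonic resonances, so the estimate holds only for $\tau$ bounded away from a discrete set (e.g.\ half-integers), and one must check this suffices for the $\tau \to \infty$ argument; with $\phi(x) = |x|^{-\alpha}$ the loss of pseudoconvexity as $\alpha \to 0$ must be tracked. The absorption of the first-order term $P_1$ also forces the gain of a full power of $\tau$ on the gradient, which you assert but which is exactly where the pseudoconvexity computation lives. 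In the generality stated here (a connection Laplacian on an arbitrary vector bundle plus smooth lower-order perturbations), carrying this out is essentially the whole content of Aronszajn's result. Your outline is a correct high-level account of how one would prove it, but it does not close the argument, and you should either cite the estimate (as the paper does by citing Aronszajn) or supply it.
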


If $(M,g,\Theta)$ is a closed Riemannian spin 
manifold, 
then from the connection~$\nabla^g$ on the spinor 
bundle one obtains 
a connection Laplacian~$\nabla^*\nabla$. 
Later we will apply the above theorem to the 
operator $P=(D^g)^2-\lambda^2$, 
$\lambda\in\mR\setminus\{0\}$. 
This is possible since 
for all $\psi\in C^{\infty}(\Sigma^gM)$ we have 
by the Schr\"odinger-Lichnerowicz formula 
\begin{equation}
  \label{schroed_lichn}
  (D^g)^2\psi=\nabla^*\nabla\psi+\frac{\scal^g}{4}\psi
\end{equation}
(see \cite{lawson.michelsohn:89} p.\,160), 
where~$\scal^g$ is the scalar curvature of $(M,g)$.

\section{Green's function for the Dirac operator}
\label{green_chapter}

\subsection{The Bourguignon-Gauduchon trivialization}
\label{bourg-triv_section}

Let $(M,g,\Theta)$ be a Riemannian spin manifold of 
dimension~$n$. 
In this section we explain a local 
trivialization of the 
spinor bundle~$\Sigma^gM$, which we will use 
in order to describe the expansion of Green's 
function for the Dirac operator. 
In the literature (e.\,g.\,\cite{ammann.humbert:05}) 
it is known as the Bourguignon-Gauduchon 
trivialization. 

Let $p\in M$, let~$U$ be an open neighborhood of~$p$ 
in~$M$ 
and let~$V$ be an open neighborhood of~$0$ in~$\mR^n$, 
such that there exists a local parametrization 
$\rho$:~$V\to U$ of~$M$ 
by Riemannian normal coordinates with~$\rho(0)=p$.
The spinor bundle over the Euclidean 
space~$(\mR^n,\geucl)$ with the unique spin structure 
will be denoted by~$\Sigma\mR^n$. 
The Clifford multiplications on~$\Sigma^gM$ and 
on~$\Sigma\mR^n$ will both be denoted by a 
dot~$\cdot$. 
It will be clear from the context which one of 
these two multiplications we mean. 

Let $x\in V$. Then there exists an endomorphism 
$G_x\in\End(T_xV)$, 
such that 
for all vectors $v$, $w\in T_xV$ we have
\begin{displaymath}
  (\rho^*g)(v,w)=\geucl(G_x v,w)
\end{displaymath}
and $G_x$ is $\geucl$-self-adjoint and positive 
definite. 
There is a unique positive definite 
endomorphism~$B_x\in\End(T_xV)$ such 
that~$B_x^2=G_x^{-1}$. 
If~$(E_i)_{i=1}^n$ is a 
$\geucl$-orthonormal basis of~$T_xV$, 
then~$(B_x E_i)_{i=1}^n$, 
is a~$\rho^*g$-orthonormal basis of~$T_xV$. 
Therefore the vectors~$d\rho|_x B_x E_i$, 
$1\leq i\leq n$, 
form a~$g$-orthonormal basis of~$T_{\rho(x)}M$. 
We assemble the maps~$B_x$ to obtain a vector bundle 
endomorphism~$B$ of~$TV$ and we define
\begin{displaymath}
  b:\quad TV\to TM|_U,\quad b=d\rho\circ B.
\end{displaymath}
From this we obtain an isomorphism of 
principal~$\SO(n)$-bundles
\begin{displaymath}
  P_{\SO}(V,\geucl)\to P_{\SO}(U,g),\quad (E_i)_{i=1}^n\mapsto(b(E_i))_{i=1}^n,
\end{displaymath}
which lifts to an isomorphism of 
principal~$\Spin(n)$-bundles
\begin{displaymath}
  c:\quad P_{\Spin}(V,\geucl)\to P_{\Spin}(U,g).
\end{displaymath}
We define 
\begin{displaymath}
  \beta:\quad \Sigma\mR^n|_V\to\Sigma^gM|_U,\quad [s,\sigma]\mapsto[c(s),\sigma].
\end{displaymath}
This gives an identification of the spinor bundles, 
which is a fiberwise isometry with respect to the bundle metrics on~$\Sigma\mR^n|_V$ 
and on~$\Sigma^gM|_U$. 
Furthermore for all~$X\in TV$ and for 
all~$\varphi\in\Sigma\mR^n|_V$ 
we have the formula  
$\beta(X\cdot\varphi)=b(X)\cdot\beta(\varphi)$. 
We obtain an isomorphism
\begin{displaymath}
A:\quad
C^{\infty}(\Sigma^gM|_U)\to 
C^{\infty}(\Sigma\mR^n|_V),\quad
\psi\mapsto \beta^{-1}\circ\psi\circ\rho,
\end{displaymath}
which sends a spinor on~$U$ to the corresponding 
spinor in the trivialization. 
Let~$\nabla^g$ resp.\,$\nabla$ denote the Levi Civita 
connections on~$(U,g)$ resp.\,on~$(V,\geucl)$ 
as well as their lifts to~$\Sigma^gM|_U$ 
and~$\Sigma\mR^n|_V$. 

The connection $\nabla^g$ on $\Sigma^gM|_U$ may be 
written as follows. 
Let~$(e_i)_{i=1}^n$ be a positively oriented 
local orthonormal frame of~$TM$ on~$U$. 
There is a locally defined section 
$s\in C^{\infty}(\rP_{\Spin}(M,g)|_U)$ such 
that~$(e_i)_{i=1}^n=\Theta\circ s$ on~$U$. 
Let~$(E_i)_{i=1}^N$ be the standard basis of~$\mC^N$, 
where~$N:=2^{[n/2]}$. 
The section~$s$ determines a local orthonormal 
frame~$(\psi_i)_{i=1}^N$ 
of~$\Sigma^gM|_U$ via 
\begin{displaymath}
\psi_i=[s,E_i]\in 
C^{\infty}(\Sigma^gM|_U),\quad i=1,...,N.
\end{displaymath}
We denote by~$\partial$ the locally defined flat connection with respect to the 
local frame~$(\psi_i)_{i=1}^N$, i.e. 
for~$h_1$,...,$h_N\in C^{\infty}(U,\mC)$ 
and~$X\in TM|_U$ we define
\begin{displaymath}
  \partial_X\big(\sum_{i=1}^N h_i\psi_i\big):=\sum_{i=1}^N X(h_i) \psi_i.
\end{displaymath}
Then for all $\psi\in C^{\infty}(\Sigma^gM|_U)$ 
we have 
\begin{displaymath}
\nabla^g_{e_i}\psi=
\partial_{e_i}\psi+\frac{1}{4}\,
\sum_{j,k=1}^n\widetilde{\Gamma}^k_{ij} 
e_j\cdot e_k\cdot\psi,
\end{displaymath}
where
\begin{displaymath}
  \widetilde{\Gamma}^k_{ij}:=g(\nabla^g_{e_i}e_j,e_k)
\end{displaymath}
(see \cite{lawson.michelsohn:89}, p.\,103, 110). 

In particular we can take the standard basis 
$(E_i)_{i=1}^n$ of~$\mR^n$ and we can 
put~$e_i:=b(E_i)$, $1\leq i\leq n$. 
We define the matrix coefficients~$B^j_i$ by the 
equation $B(E_i)=\sum_{j=1}^n B^j_i E_j$. 
It follows that
\begin{align}
\nonumber
A\nabla^g_{e_i}\psi&=
\nabla_{d\rho^{-1}(e_i)}A\psi+\frac{1}{4}\,
\sum_{j,k=1}^n\widetilde{\Gamma}^k_{ij} E_j
\cdot E_k\cdot A\psi\\
\nonumber
&=\nabla_{B(E_i)}A\psi+\frac{1}{4}\,\sum_{j,k=1}^n
\widetilde{\Gamma}^k_{ij} E_j\cdot E_k\cdot A\psi\\
\nonumber 
&=\nabla_{E_i}A\psi+\sum_{j=1}^n(B^j_i-\delta^j_i)
\nabla_{E_j}A\psi\\
\label{nabla_triv}
&{}+\frac{1}{4}\,\sum_{j,k=1}^n
\widetilde{\Gamma}^k_{ij} E_j\cdot E_k
\cdot A\psi.
\end{align}
Hence we obtain
\begin{align}
\nonumber
AD^g\psi&=
D^{\geucl}A\psi
+\sum_{i,j=1}^n(B^j_i-\delta^j_i)E_i
\cdot\nabla_{E_j}A\psi\\
\label{dirac_triv}
&{}+\frac{1}{4}\sum_{i,j,k=1}^n
\widetilde{\Gamma}^k_{ij} E_i\cdot E_j\cdot E_k
\cdot A\psi.
\end{align}
Let $\partial_j:=d\rho(E_j)$, $1\leq j\leq n$, be the 
coordinate vector fields of the normal coordinates. 
The Taylor expansion of the coefficient~$g_{ij}$ of 
the metric at~$0$ is given by
\begin{displaymath}
  g_{ij}(x)=\delta_{ij}+\frac{1}{3}\sum_{a,b=1}^n R_{iabj}(p)x_a x_b+O(|x|^3),
\end{displaymath}
where $R_{iabj}
=g(R(\partial_b,\partial_j)\partial_a,\partial_i)$ 
denotes the components of the Riemann curvature tensor 
(see e.\,g.\,\cite{lee.parker:87}, p.\,42, 61). 
Since~$(B^j_i)_{ij}=(g_{ij})_{ij}^{-1/2}$ it follows that
\begin{equation}
  \label{B-expansion}
  B^j_i(x)=\delta^j_i-\frac{1}{6}\sum_{a,b=1}^n R_{iabj}(p)x_a x_b +O(|x|^3).
\end{equation}
Since we have $\nabla^g_{\partial_k}\partial_r|_p=0$ for all $k,r$, we obtain 
\begin{equation}
  \label{gammatilde}
  \widetilde{\Gamma}^m_{kr}(\rho(x))=O(|x|)
\end{equation}
as $x\to0$ for all $m$, $k$, $r$.

\subsection{The Euclidean Dirac operator}

In this section we obtain pre\-images 
under the Dirac operator 
of certain spinors on~$\mR^n\setminus\{0\}$ with the 
Euclidean metric. 
The results will be useful for obtaining the expansion 
of Green's function for the Dirac operator on a closed 
spin manifold. 

\begin{definition}
\label{pkmi_def}
For $k\in\mR$, $m\in\mN$ and $i\in\{0,1\}$ we define 
the vector subspaces $P_{k,m,i}(\mR^n)$ of 
$C^{\infty}(\Sigma\mR^n|_{\mR^n\setminus\{0\}})$ 
as follows. For $k\neq0$ define 
\begin{align*}
  P_{k,m,0}(\mR^n)&:=\spann\Big\{x\mapsto x_{i_1}...x_{i_m}|x|^k\gamma\,
  \Big|\begin{array}{l}1\leq i_1,...,i_m\leq n,\\ \gamma\in\Sigma_n\textrm{ constant}\end{array}\Big\}\\
  P_{k,m,1}(\mR^n)&:=\spann\Big\{x\mapsto x_{i_1}...x_{i_m}|x|^k x\cdot\gamma\,
  \Big|\begin{array}{l}1\leq i_1,...,i_m\leq n,\\ \gamma\in\Sigma_n\textrm{ constant}\end{array}\Big\}
\end{align*}
and furthermore define 
\begin{align*}
P_{0,m,0}(\mR^n)&:=\spann\Big\{x\mapsto x_{i_1}...x_{i_m}\ln|x|\gamma\,
\Big|\begin{array}{l}1\leq i_1,...,i_m\leq n,\\ \gamma\in\Sigma_n\textrm{ constant}\end{array}\Big\}\\
P_{0,m,1}(\mR^n)&:=\spann\Big\{x\mapsto x_{i_1}...x_{i_m}(1-n\ln|x|) x\cdot\gamma\,
\Big|\hspace{-0.5em}\begin{array}{l}1\leq i_1,...,i_m\leq n,\hspace{-0.14em}\\ \gamma\in\Sigma_n\textrm{ constant}\end{array}
\hspace{-0.3em}\Big\}.
\end{align*}
\end{definition}

With these definitions we can prove the following 
proposition.

\begin{proposition}
\label{prop_invert_dirac}
For all $m\in\mN$, $k\in\mR$ with the 
properties~$-n\leq k$ 
and~$-n<k+m\leq0$ we have 
\begin{displaymath}
  P_{k,m,0}(\mR^n)\subset D^{\geucl}\Big(\sum_{j=1}^{[\frac{m+1}{2}]}P_{k+2j,m+1-2j,0}(\mR^n)+\sum_{j=0}^{[\frac{m}{2}]}P_{k+2j,m-2j,1}(\mR^n)\Big)
\end{displaymath}
For all $m\in\mN$, $k\in\mR$ with $-n\leq k$ and $-n<k+m+1\leq0$ we have 
\begin{displaymath}
  P_{k,m,1}(\mR^n)\subset D^{\geucl}\Big(\sum_{j=0}^{[\frac{m}{2}]}P_{k+2+2j,m-2j,0}(\mR^n)+\hspace{-0.05em}\sum_{j=1}^{[\frac{m+1}{2}]}P_{k+2j,m+1-2j,1}(\mR^n)\Big).
\end{displaymath}
\end{proposition}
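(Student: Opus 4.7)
The plan is to prove both inclusions simultaneously by induction on $m$, producing explicit preimages. The computational engine consists of two identities obtained from the product rule for $D^{\geucl}$, the Clifford relations $E_l\cdot E_l=-1$ and $x\cdot x=-|x|^2$, and Euler's identity $\sum_l x_l\partial_l p=mp$ for a monomial $p=x_{i_1}\cdots x_{i_m}$ of degree $m$. For any constant spinor $\gamma\in\Sigma_n$ and any $k\in\mR$ one verifies
\begin{equation*}
D^{\geucl}(p|x|^{k+2}\gamma)=\sum_l(\partial_l p)|x|^{k+2}E_l\cdot\gamma+(k+2)\,p|x|^k x\cdot\gamma,
\end{equation*}
\begin{equation*}
D^{\geucl}(p|x|^k x\cdot\gamma)=-\sum_l(\partial_l p)|x|^k x\cdot E_l\cdot\gamma-(k+2m+n)\,p|x|^k\gamma.
\end{equation*}
Each identity exhibits an ansatz that already lies in the target sum on the right-hand side of the desired inclusion, whose $D^{\geucl}$-image equals the generator we want modulo a correction term of polynomial degree $m-1$ in $x$.

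For the base case $m=0$ of the first inclusion with $k\neq 0$, the second identity gives $|x|^k\gamma=-\frac{1}{k+n}D^{\geucl}(|x|^k x\cdot\gamma)$, and the hypothesis $-n<k$ yields $k+n>0$. For $k=0$ one uses instead the tailored identity $D^{\geucl}((1-n\ln|x|)x\cdot\gamma)=n^2\ln|x|\gamma$, which is precisely why $P_{0,0,1}$ is defined with the factor $1-n\ln|x|$. Analogously for the second inclusion at $m=0$: the first identity handles $k\neq -2$, while the direct computation $D^{\geucl}(\ln|x|\gamma)=|x|^{-2}x\cdot\gamma$ handles $k=-2$.

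For the inductive step $m\geq 1$, the hypothesis of the first inclusion forces $k\leq -m\leq -1$, and that of the second forces $k\leq -m-1\leq -2$. For the first inclusion at $(k,m)$, rearranging the second identity exhibits $p|x|^k\gamma$ as $D^{\geucl}$ applied to the ansatz $-\frac{1}{k+2m+n}p|x|^k x\cdot\gamma\in P_{k,m,1}$ (the $j=0$ summand of the second sum in the target) plus a correction $-\frac{1}{k+2m+n}\sum_l(\partial_l p)|x|^k x\cdot E_l\cdot\gamma\in P_{k,m-1,1}$, for which the inductive hypothesis for the second inclusion at $(k,m-1)$ supplies a preimage; the shift $j\mapsto j+1$ in the first sum shows the preimage lies inside the target for $(k,m)$. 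Symmetrically, the second inclusion at $(k,m)$ with $k\neq -2$ reduces via the first identity to an application of the first inclusion at $(k+2,m-1)$. The only remaining case is $k=-2$, $m=1$, the single $k=-2$ value permitted by the hypothesis of the second inclusion for $m\geq 1$; it is handled by the tailored identity $D^{\geucl}(p\ln|x|\gamma)=\sum_l(\partial_l p)\ln|x|E_l\cdot\gamma+p|x|^{-2}x\cdot\gamma$ combined with the established base case of the first inclusion at $(0,0)$.

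The main obstacle will be bookkeeping: verifying that every divisor encountered ($k+n$, $k+2$, $k+2m+n$, $n^2$) is nonzero under the precise hypotheses, and checking, after reindexing, that the inductively produced preimages really do lie in the target sum for $(k,m)$. The constants work out because $k+2m+n\geq 2m$ follows from $k\geq -n$, while the conditions $k+n>0$ and $k+2\neq 0$ only require attention at the base case or in the single boundary case $(k,m)=(-2,1)$; these are exactly the configurations for which the logarithmic factors appear in the definitions of $P_{0,m,i}$.
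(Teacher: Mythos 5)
Your proposal is correct and follows essentially the same route as the paper: induction on $m$, the same four base-case identities (including the logarithmic ones built into $P_{0,m,i}$), and the same two product-rule identities — the paper writes them via $E_i\cdot x=-2x_i-x\cdot E_i$ and via $f_{k+2}$ (with $f_0=\ln|x|$ absorbing your separate $k=-2$ case) — with the correction terms handled by the induction hypothesis for the opposite inclusion after the same index shift. Your bookkeeping of the nonvanishing divisors and of the boundary case $(k,m)=(-2,1)$ matches what the paper does implicitly.
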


\begin{proof}
We use induction on $m$. 
Let $m=0$ and let $\gamma$ be a constant spinor 
on~$(\mR^n,\geucl)$. 
We want to prove that 
$P_{k,0,0}(\mR^n)\subset D^{\geucl}(P_{k,0,1}(\mR^n))$ 
for all~$k$ with $-n<k\leq0$ and 
$P_{k,0,1}(\mR^n)\subset
D^{\geucl}(P_{k+2,0,0}(\mR^n))$ for all~$k$ 
with~$-n\leq k$ and $k+1\leq0$. 
One calculates easily that 
\begin{align*}
D^{\geucl}\big(-\frac{1}{n+k}|x|^kx\cdot\gamma\big)&=|x|^k\gamma,\quad k\neq-n\\
D^{\geucl}\big(\frac{1-n\ln|x|}{n^2}x\cdot\gamma\big)
&=\ln|x|\gamma,\\
D^{\geucl}\big(\frac{1}{k+2}|x|^{k+2}\gamma\big)
&=|x|^kx\cdot\gamma,\quad k\neq-2\\
D^{\geucl}(\ln|x|\gamma)&=|x|^{-2}x\cdot\gamma.
\end{align*}
Using the definition of $P_{k,0,i}(\mR^n)$ one 
obtains the assertion for $m=0$.

Let $m\geq1$ and assume that all the inclusions in the assertion hold for~$m-1$. 
Using the equation $E_i\cdot x=-2x_i-x\cdot E_i$ we find 
\begin{align*}
& D^{\geucl}(-x_{i_1}...x_{i_m}|x|^k x\cdot\gamma)\\
&=-\sum_{j=1}^m
x_{i_1}...\widehat{x_{i_j}}...x_{i_m}|x|^k
E_{i_j}\cdot x\cdot\gamma
-x_{i_1}...x_{i_m} D^{\geucl}(|x|^k x\cdot\gamma)\\
&=(2m+n+k)x_{i_1}...x_{i_m}|x|^k\gamma
+\sum_{j=1}^m
x_{i_1}...\widehat{x_{i_j}}...x_{i_m}|x|^k 
x\cdot E_{i_j}\cdot\gamma.
\end{align*}
Since $E_{i_j}\cdot\gamma$ is a parallel spinor the 
sum on the right hand side 
is contained in~$P_{k,m-1,1}(\mR^n)$. 
We apply the 
induction hypothesis and since~$2m+n+k\neq0$ we 
obtain the assertion for~$P_{k,m,0}(\mR^n)$. 
We define~$f_k(x):=\frac{1}{k}|x|^k$ for $k\neq0$ 
and $f_0(x):=\ln|x|$. 
Then we find 
\begin{align*}
& D^{\geucl}(x_{i_1}...x_{i_m}f_{k+2}(x)\gamma)\\
&=x_{i_1}...x_{i_m}|x|^k x\cdot\gamma
+\sum_{j=1}^m
x_{i_1}...\widehat{x_{i_j}}...x_{i_m}f_{k+2}(x)
E_{i_j}\cdot\gamma.
\end{align*}
The sum on the right hand side is in 
$P_{k+2,m-1,0}(\mR^n)$. 
Again we apply the induction hypothesis and we 
obtain the assertion for~$P_{k,m,1}(\mR^n)$.
\end{proof}

\subsection{Expansion of Green's function}

Let~$(M,g,\Theta)$ be a closed Riemannian spin 
manifold,~$n=\dim M$ and~$\lambda\in\mR$. 
Let $\pi_i$: $M\times M\to M$, $i=1,2$ be the 
projections. We define
\begin{displaymath}
\Sigma^gM\boxtimes\Sigma^gM^*
:=\pi_1^*\Sigma^gM\otimes(\pi_2^*\Sigma^gM)^*
\end{displaymath}
i.\,e.\,$\Sigma^gM\boxtimes\Sigma^gM^*$ is the 
vector bundle over~$M\times M$ 
whose fiber over the point $(x,y)\in M\times M$ is 
given by $\Hom_{\mC}(\Sigma^g_yM,\Sigma^g_xM)$. 
We define $\Delta:=\{(x,x)|x\in M\}$. 
In the following we will abbreviate
\begin{displaymath}
\int_{M\setminus\{p\}}
:=\lim_{\varepsilon\to0}
\int_{M\setminus B_{\varepsilon}(p)}.
\end{displaymath}

\begin{definition}
A smooth section $G^g_{\lambda}$: 
$M\times M\setminus\Delta
\to\Sigma^gM\boxtimes\Sigma^gM^*$ 
which is locally integrable on~$M\times M$ is called 
a Green's function for $D^g-\lambda$ 
if for all~$p\in M$ and for 
all~$\varphi\in\Sigma^g_pM$ 
the following holds: 
\begin{enumerate}
\item For all~$\psi\in\im(D^g-\lambda)$ we have
\begin{equation}
  \label{green_function1}
  \int_{M\setminus\{p\}} \big\langle
  (D^g-\lambda)\psi,G^g_{\lambda}(.,p)\varphi\big
  \rangle \dv^g
  =\big\langle \psi(p),\varphi \big\rangle.
 \end{equation}
\item For 
all~$\psi\in\ker(D^g-\lambda)$ we have
\begin{equation}
  \label{green_function2}
  \int_{M\setminus\{p\}} \big\langle
  \psi,G^g_{\lambda}(.,p)\varphi\big\rangle \dv^g=0.
\end{equation}
\end{enumerate}
\end{definition}
The smooth spinor $G^g_{\lambda}(.,p)\varphi$ on 
$M\setminus\{p\}$ will sometimes also 
be called Green's function for~$\varphi$. 
Let $P$: $ 
C^{\infty}(\Sigma^gM)\to\ker(D^g-\lambda)$ denote 
the~$L^2$-orthogonal projection. 
Then we have for all~$\psi\in C^{\infty}(\Sigma^gM)$ 
and for all~$\varphi\in\Sigma^g_pM$
\begin{equation}
  \label{greens_function}
  \int_{M\setminus\{p\}} \big\langle
  (D^g-\lambda)\psi,G^g_{\lambda}(.,p)\varphi\big
  \rangle \dv^g
  =\big\langle \psi(p)-P\psi(p),\varphi \big\rangle.
\end{equation}

On Euclidean space we define a Green's function as 
follows.

\begin{definition}
Let $(M,g)=(\mR^n,\geucl)$ with the unique spin 
structure. 
A smooth section $G^g_{\lambda}$: $M\times 
M\setminus\Delta\to\Sigma^gM\boxtimes\Sigma^gM^*$ 
which is locally integrable on~$M\times M$ is called 
a Green's function for~$D^g-\lambda$ 
if for all~$p\in M$, for all~$\varphi\in\Sigma^g_pM$ 
and for all~$\psi\in C^{\infty}(\Sigma^gM)$ 
with compact support the 
equation~(\ref{green_function1}) holds.
\end{definition}

Of course a Green's function for 
$D^{\geucl}-\lambda$ is not uniquely 
determined by this definition. 
We will explicitly write down a Green's function for 
$D^{\geucl}-\lambda$. 
First observe that for every spinor $\chi\in 
C^{\infty}(\Sigma\mR^n|_{\mR^n\setminus\{0\}})$ 
and for every $\lambda\in\mR$ the equation
\begin{displaymath}
  (D^{\geucl}-\lambda)(D^{\geucl}+\lambda)\chi
  =-\sum_{i=1}^n
  \nabla_{E_i}\nabla_{E_i}\chi-\lambda^2\chi
\end{displaymath}
holds on $\mR^n\setminus\{0\}$. 
Let $\gamma$ be a constant spinor on 
$(\mR^n,\geucl)$ 
and let~$g$ be a solution to the ordinary 
differential equation 
\begin{equation}
  \label{green_ode}
  g''(z)+\frac{n-1}{z}g'(z)+\lambda^2g(z)=-\delta_0,
\end{equation}
which is smooth on $(0,\infty)$. 
If we define $f$: $\mR^n\setminus\{0\}\to\mR$,
$f(x):=g(|x|)$, then 
the spinor 
$G^{\geucl}_{\lambda}(.,0)\gamma
:=(D^{\geucl}+\lambda)(f\gamma)$ 
is a Green's function.

In the following let $\Gamma$ denote the Gamma 
function and~$J_m$,~$Y_m$ 
the Bessel functions of the first and second kind 
for the parameter~$m\in\mR$. 
In the notation of \cite{abramowitz.stegun:64}, 
p.\,360 they are defined for~$z\in(0,\infty)$ by 
\begin{align*}
J_m(z)&=\frac{1}{2^m\Gamma(m+1)}z^m
\big(1+\sum_{k=1}^{\infty}a_k z^{2k}\big),
\quad m\in\mR,\\
Y_0(z)&=\frac{2}{\pi}
\big(\ln\big(\frac{z}{2}\big)+c\big)J_0(z)
+\sum_{k=1}^{\infty}b_k z^{2k},\\
Y_m(z)&=-\frac{2^m}{\pi}\Gamma(m)z^{-m}
\big(1+\sum_{k=1}^{\infty}c_k z^{2k}\big),\quad
m=\frac{1}{2}+k,k\in\mN, \\
Y_m(z)&=-\frac{2^m}{\pi}\Gamma(m)z^{-m}
\big(1+\sum_{k=1}^{\infty}d_k z^{2k}\big)
+\frac{2}{\pi}\ln\big(\frac{z}{2}\big)J_m(z),\,\,\,
m\in\mN\setminus\{0\},
\end{align*}
where $c$ is a real constant, the $a_k$, $b_k$, 
$c_k$, $d_k$ are real coefficients, 
the $a_k$, $c_k$, $d_k$ depend on $m$ 
and all the power series converge for all 
$z\in(0,\infty)$.
Let~$\omega_{n-1}$ be the volume 
of $S^{n-1}$ 
with the standard metric.

\begin{theorem}
\label{theorem_green_eucl}
Let $m:=\frac{n-2}{2}$. 
We define $f_{\lambda}$: $\mR^n\setminus\{0\}\to\mR$ 
as follows. 
For $\lambda\neq0$ and $n=2$ 
\begin{displaymath}
f_{\lambda}(x)
:=-\frac{1}{4}Y_0(|\lambda x|)
+\frac{\ln|\lambda|-\ln(2)+c}{2\pi}J_0(|\lambda x|),
\end{displaymath}
for $\lambda\neq0$ and odd $n\geq3$ 
\begin{displaymath}
f_{\lambda}(x):=
-
\frac{\pi|\lambda|^m}{2^m\Gamma(m)(n-2)\omega_{n-1}}
|x|^{-m}Y_m(|\lambda x|),
\end{displaymath}
for $\lambda\neq0$ and even $n\geq4$ 
\begin{align*}
f_{\lambda}(x)&:=
-\frac{\pi|\lambda|^m}{2^m\Gamma(m)(n-2)\omega_{n-1}}
|x|^{-m}\\
&\times\Big(Y_m(|\lambda x|)
-\frac{2(\ln|\lambda|-\ln(2))}{\pi}
J_m(|\lambda x|)\Big)
\end{align*}
and 
\begin{displaymath}
f_0(x):=-\frac{1}{2\pi}\ln|x|,\quad n=2,\qquad 
f_0(x):=\frac{1}{(n-2)\omega_{n-1}|x|^{n-2}},
\quad n\geq3.
\end{displaymath}
Then for every constant spinor $\gamma$ on $\mR^n$ a 
Green's function for $D^{\geucl}-\lambda$ is given by
\begin{displaymath}
G^{\geucl}_{\lambda}(x,0)\gamma=(D^{\geucl}+\lambda)
(f_{\lambda}\gamma)(x).
\end{displaymath}
\end{theorem}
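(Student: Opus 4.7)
My strategy is to reduce the theorem to the scalar distributional identity $(-\Delta-\lambda^2)f_\lambda=\delta_0$ on $\mR^n$, and then verify this for each $f_\lambda$ in the statement. Since $\scal^{\geucl}=0$, the Schr\"odinger--Lichnerowicz formula (\ref{schroed_lichn}) gives $(D^{\geucl})^2=-\Delta$ on scalar multiples of a constant spinor, so
\[
(D^{\geucl}-\lambda)(D^{\geucl}+\lambda)(f_\lambda\gamma)=\bigl((-\Delta-\lambda^2)f_\lambda\bigr)\gamma.
\]
Combined with the formal self-adjointness of $D^{\geucl}$ (integrate by parts in (\ref{green_function1}) on $\mR^n\setminus B_\varepsilon(0)$ and let $\varepsilon\to 0$, using the known growth of $f_\lambda$ at the origin to control the boundary terms), the Green's function condition becomes equivalent to $(-\Delta-\lambda^2)f_\lambda=\delta_0$ distributionally.

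Since $f_\lambda$ is radial, I would write $f_\lambda(x)=g_\lambda(|x|)$ and reduce to the radial ODE (\ref{green_ode}). For $\lambda\neq 0$, the substitution $g(r)=r^{-m}h(|\lambda|r)$ with $m=(n-2)/2$ converts the homogeneous equation on $(0,\infty)$ into Bessel's equation $s^2 h''+s h'+(s^2-m^2)h=0$. A direct computation with the power series defining $J_m$ and $Y_m$ shows that each formula for $f_\lambda$ in the theorem satisfies the homogeneous ODE on $(0,\infty)$. To pin down the coefficient of the delta function, I would use the small-argument asymptotics $Y_m(z)\sim-\tfrac{2^m\Gamma(m)}{\pi}z^{-m}$ for $m>0$ and $Y_0(z)\sim\tfrac{2}{\pi}(\ln(z/2)+c)$ as $z\to 0^+$, which show that the leading singular part of $f_\lambda$ coincides with $f_0$. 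Since $f_\lambda-f_0$ is locally bounded (with suitable interpretation for $n=2$) and smooth away from $0$, the identity $(-\Delta-\lambda^2)f_\lambda=\delta_0$ reduces to the classical $(-\Delta)f_0=\delta_0$.

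The main obstacle is the bookkeeping of normalization constants and the separate treatment of even $n$: when $m\in\mN$, $Y_m(z)$ contains a term $\tfrac{2}{\pi}\ln(z/2)J_m(z)$, and the $\ln|\lambda|$ portion of $\ln(|\lambda|z/2)$ must be cancelled by adding a multiple of $J_m$ (an entire solution of the homogeneous equation, hence harmless for the delta-function identity); this is the source of the extra $J_m$-term in the formulas for $n=2$ and for even $n\geq 4$. The case $n=2$ must be handled separately because $m=0$ and the fundamental solution of $-\Delta$ is logarithmic rather than power-type. These are essentially exercises in Bessel-function asymptotics; the conceptual content of the proof lies in the first two steps.
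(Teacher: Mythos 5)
Your strategy is essentially the paper's: reduce to the radial Helmholtz equation~(\ref{green_ode}), verify it via Bessel-function expansions, and pick out the $\delta_0$ by an integration-by-parts argument on $\mR^n\setminus B_\varepsilon(0)$ as $\varepsilon\to0$. The paper does the last step with the Dirac Green's formula $(D^g\psi,\varphi)_2-(\psi,D^g\varphi)_2=\int_{\partial M}\langle\nu\cdot\psi,\varphi\rangle\,dA$ applied directly to $(D^{\geucl}+\lambda)(f_\lambda\gamma)$, using the leading term $-\frac{1}{\omega_{n-1}}\frac{x}{|x|^n}\cdot\gamma$ from Corollary~\ref{coroll_green_asymp}, whereas you pass through the scalar distributional identity $(-\Delta-\lambda^2)f_\lambda=\delta_0$; these are the same computation viewed at first or second order in derivatives, so I would count this as the same route.

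One step does not hold as stated. You claim that $f_\lambda-f_0$ is locally bounded near the origin. The series used in the proof of Corollary~\ref{coroll_green_asymp} give $f_\lambda(x)=f_0(x)\bigl(1+\sum_{k\geq 1}c_k|\lambda x|^{2k}\bigr)$ for odd $n\geq 3$, with an additional $\ln|x|$ contribution for even $n\geq4$, so $f_\lambda-f_0$ has leading singularity $O(|x|^{4-n})$ (and $O(\ln|x|)$ for $n=4$), which is unbounded for every $n\geq4$. Local boundedness is therefore true only for $n\in\{2,3\}$. The conclusion $(-\Delta-\lambda^2)f_\lambda=\delta_0$ still follows, but the right justification is that each subleading term has a singularity strictly weaker than that of $f_0$: if $h$ is smooth on $\mR^n\setminus\{0\}$ with $h(x)=O(|x|^s)$ and $|\nabla h(x)|=O(|x|^{s-1})$ for some $s>2-n$, then the boundary integrals $\int_{|x|=\varepsilon}\bigl(h\,\partial_\nu\varphi-\varphi\,\partial_\nu h\bigr)\,dA$ arising in the double integration by parts are $O(\varepsilon^{s+n-2})\to 0$, so the distributional Laplacian of $h$ agrees with the pointwise one and carries no $\delta_0$; the delta comes only from the $|x|^{2-n}$ part (resp.\ the $\ln|x|$ part when $n=2$), which is exactly $f_0$. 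With this replacement your argument covers every dimension in Theorem~\ref{theorem_green_eucl}, rather than only the $n\leq3$ that Theorem~\ref{main_theorem} actually uses.
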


\begin{corollary}
\label{coroll_green_asymp}
For every constant spinor $\gamma\in\Sigma_n$ there 
exists a Green's function 
of~$D^{\geucl}-\lambda$, which has the following 
form. For $n=2$ 
\begin{displaymath}
G^{\geucl}_{\lambda}(x,0)\gamma
=-\frac{1}{2\pi|x|}\frac{x}{|x|}\cdot\gamma
-\frac{\lambda}{2\pi}\ln|x|\gamma
+\ln|x|\vartheta_{\lambda}(x)+\zeta_{\lambda}(x),
\end{displaymath}
for odd $n\geq3$ 
\begin{displaymath}
G^{\geucl}_{\lambda}(x,0)\gamma
=-\frac{1}{\omega_{n-1}|x|^{n-1}}\frac{x}{|x|}
\cdot\gamma
+\frac{\lambda}{(n-2)\omega_{n-1}|x|^{n-2}}\gamma
+|x|^{2-n}\zeta_{\lambda}(x),
\end{displaymath}
for even $n\geq4$ 
\begin{align*}
G^{\geucl}_{\lambda}(x,0)\gamma
&=-\frac{1}{\omega_{n-1}|x|^{n-1}}\frac{x}{|x|}
\cdot\gamma
+\frac{\lambda}{(n-2)\omega_{n-1}|x|^{n-2}}\gamma
+|x|^{2-n}\zeta_{\lambda}(x)\\
&{}-\frac{\lambda^{n-1}}{2^{n-2}
\Gamma(\frac{n}{2})^2\omega_{n-1}}\ln|x|\gamma
+\ln|x|\vartheta_{\lambda}(x),
\end{align*}
where for every $n$ and for every~$\lambda$ the 
spinors~$\vartheta_{\lambda}$ and~$\zeta_{\lambda}$ 
extend smoothly to~$\mR^n$ and satisfy 
\begin{displaymath}
|\zeta_{\lambda}(x)|_{\geucl}=O(|x|),\quad
|\vartheta_{\lambda}(x)|_{\geucl}=O(|x|)\quad
\textrm{ as } x\to0
\end{displaymath}
and where for every $n$ and for every $x$ the 
spinors 
$\vartheta_{\lambda}(x),
\zeta_{\lambda}(x)\in\Sigma_n$ 
are power series in $\lambda$ with 
$\vartheta_0(x)=\zeta_0(x)=0$.
\end{corollary}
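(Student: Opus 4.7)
The plan is to apply Theorem \ref{theorem_green_eucl} directly and expand the resulting expression asymptotically. Since $\gamma$ is a constant (hence parallel) spinor and $f_\lambda$ is scalar-valued, the formula $G^{\geucl}_\lambda(x,0)\gamma = (D^{\geucl}+\lambda)(f_\lambda\gamma)$ reduces to
\[
  G^{\geucl}_\lambda(x,0)\gamma = \grad f_\lambda(x)\cdot\gamma + \lambda f_\lambda(x)\gamma.
\]
Because $f_\lambda$ is radial, writing $f_\lambda(x)=h_\lambda(|x|)$ gives $\grad f_\lambda(x)=h_\lambda'(|x|)\,\tfrac{x}{|x|}$, so the whole task reduces to obtaining a series expansion of $h_\lambda(r)$ and $h_\lambda'(r)$ as $r\to 0$.

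The next step is to substitute the Bessel-function series quoted from \cite{abramowitz.stegun:64} into the definitions of $f_\lambda$. In each case the factor $|\lambda|^m$ in front is perfectly tuned to cancel the leading $|\lambda x|^{-m}$ of $Y_m$, after which $h_\lambda(r)$ becomes a convergent series in $r^{2k-(n-2)}$ (and, for $n=2$ or even $n\geq 4$, also in $r^{2k}\ln r$) with coefficients that are polynomial in $\lambda^2$. Differentiating term by term then produces $h_\lambda'(r)$ and hence the two components $\grad f_\lambda\cdot\gamma$ and $\lambda f_\lambda\gamma$ as explicit series.

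I would then identify the most singular contribution in each dimensional case: for odd $n\geq 3$ the $r^{-(n-1)}$ term from $h_\lambda'$ and the $\lambda r^{-(n-2)}$ term from $\lambda f_\lambda$; for $n=2$ the $r^{-1}$ term from $h_\lambda'$ together with the $-\tfrac{\lambda}{2\pi}\ln r$ term from $\lambda f_\lambda$; and for even $n\geq 4$ all three types of singularity. The remaining terms are manifestly of higher order at the origin, and organizing them by the factor $|x|^{2-n}$ (respectively $\ln|x|$) produces $\zeta_\lambda$ (respectively $\vartheta_\lambda$) which extend smoothly through $x=0$ and vanish there. Since only even powers of $|\lambda x|$ survive after the $|\lambda|^m$ cancellation, $\zeta_\lambda$ and $\vartheta_\lambda$ are power series in $\lambda^2$; setting $\lambda=0$ recovers the $\lambda$-independent part only, which is exactly the singular leading term, giving $\vartheta_0=\zeta_0=0$.

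The main obstacle I expect is the even-dimensional case $n\geq 4$, where the $\ln|\lambda|$ correction in the definition of $f_\lambda$ is essential. It is designed so that the $\tfrac{2}{\pi}\ln(|\lambda x|/2)\,J_m(|\lambda x|)$ piece in the expansion of $Y_m(|\lambda x|)$, after subtraction, collapses to a pure $\tfrac{2}{\pi}\ln|x|\,J_m(|\lambda x|)$ factor times a power series in $\lambda^2$. Using $\Gamma(m+1)=m\Gamma(m)$ and $2m=n-2$ then yields the stated coefficient $-\tfrac{\lambda^{n-1}}{2^{n-2}\Gamma(n/2)^2\omega_{n-1}}$ for the logarithmic singularity. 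Verifying this algebraically without sign or factor errors is the most delicate part of the argument; everything else is essentially term-by-term bookkeeping from the quoted Bessel expansions.
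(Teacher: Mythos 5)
Your proposal is correct and is essentially the paper's own argument: the paper likewise substitutes the quoted Bessel series into the definitions of $f_{\lambda}$, notes that the $\ln|\lambda|$ terms cancel and that the $|\lambda|^{m}$ prefactor cancels the leading $|\lambda x|^{-m}$ behaviour of $Y_m$ (yielding exactly the simplified expansions of $f_{\lambda}$, including the coefficient $-\frac{\lambda^{n-2}}{2^{n-2}\Gamma(n/2)^2\omega_{n-1}}$ of $\ln|x|$ for even $n\geq4$), and then reads off the corollary from $G^{\geucl}_{\lambda}(x,0)\gamma=\grad f_{\lambda}(x)\cdot\gamma+\lambda f_{\lambda}(x)\gamma$. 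One minor slip: because of the extra factor of $\lambda$ in $\lambda f_{\lambda}\gamma$ (and the $\lambda^{n-1}$ logarithmic coefficient), the remainders $\vartheta_{\lambda}$ and $\zeta_{\lambda}$ are power series in $\lambda$ containing odd powers, not power series in $\lambda^2$; this is harmless, since every remainder term still carries a positive power of $\lambda$, so $\vartheta_0=\zeta_0=0$ as the corollary requires.
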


\begin{proof}[Proof of Corollary 
\ref{coroll_green_asymp}]
Using the definition of $f_{\lambda}$ we find for 
$n=2$
\begin{displaymath}
f_{\lambda}(x)=-\frac{1}{2\pi}\ln|x|
\big(1+\sum_{k=1}^{\infty}a_k|\lambda x|^{2k}\big)
-\frac{1}{4}\sum_{k=1}^{\infty}b_k|\lambda x|^{2k},
\end{displaymath}
for odd $n\geq3$
\begin{displaymath}
f_{\lambda}(x)=\frac{1}{(n-2)\omega_{n-1}|x|^{n-2}}
\big(1+\sum_{k=1}^{\infty}c_k|\lambda x|^{2k}\big)
\end{displaymath}
and for even $n\geq4$
\begin{align*}
f_{\lambda}(x)&=
\frac{1}{(n-2)\omega_{n-1}|x|^{n-2}}
\big(1+\sum_{k=1}^{\infty}d_k|\lambda x|^{2k}\big)\\
&{}-\frac{\lambda^{n-2}}{2^{n-2}(m!)^2\omega_{n-1}}
\ln|x|\big(1+\sum_{k=1}^{\infty}a_k|
\lambda x|^{2k}\big).
\end{align*}
The assertion follows.
\end{proof}

\begin{proof}[Proof of Theorem 
\ref{theorem_green_eucl}]
Let $f_{\lambda}$ be as above and write 
$f_{\lambda}(x)=g_{\lambda}(|x|)$ 
with $g_{\lambda}$: $(0,\infty)\to\mR$. 
Then $g_{\lambda}$ solves the equation 
(\ref{green_ode}).
It remains to show that 
$(D^{\geucl}+\lambda)(f_{\lambda}\gamma)$ satisfies 
(\ref{green_function1}). 
The calculation in the proof of 
Corollary~\ref{coroll_green_asymp} shows
\begin{displaymath}
(D^{\geucl}+\lambda)(f_{\lambda}\gamma)(x)=
-\frac{1}{\omega_{n-1}} 
\frac{x}{|x|^n}\cdot\gamma+\zeta(x),
\end{displaymath}
where $|\zeta(x)|_{\geucl}=o(|x|^{1-n})$ as $x\to0$. 
For any Riemannian spin manifold $(M,g,\Theta)$ with 
boundary~$\partial M$ 
and $\psi$, $\varphi$ compactly supported spinors we 
have
\begin{displaymath}
(D^g\psi,\varphi)_2-(\psi,D^g\varphi)_2
=\int_{\partial M}
\langle \nu\cdot\psi,\varphi\rangle\,dA,
\end{displaymath}
where $\nu$ is the outer unit normal vector field on 
$\partial M$ (see \cite{lawson.michelsohn:89}, p.\,115). 
We apply this equation to $(\mR^n\setminus 
B_{\varepsilon}(0),\geucl)$ and 
$\nu(x):=-\frac{x}{|x|}$ 
and we obtain the assertion. 
\end{proof}

\begin{definition}
For $m\in\mR$ we define 
\begin{displaymath}
P_m(\mR^n):=\sum_{r+s+t\geq m\atop r\geq-n} 
P_{r,s,t}(\mR^n)
+( C^{\infty}(\Sigma\mR^n|_{\mR^n\setminus\{0\}})
\cap C^{0}(\Sigma\mR^n)),
\end{displaymath}
where the second space on the right hand side is the 
space of all spinors which are smooth 
on $\mR^n\setminus\{0\}$ and have a continuous 
extension to $\mR^n$.
\end{definition}

\begin{remark}
Let $\vartheta\in P_m(\mR^n)$. 
Then we have $E_i\cdot\vartheta\in P_m(\mR^n)$ for 
all~$i\in\{1,...,n\}$. 
If $f\in C^{\infty}(\mR^n,\mC)$ then from 
Taylor's formula for~$f$ it follows 
that the spinor~$f\vartheta$ is in~$P_m(\mR^n)$.
\end{remark}

\begin{remark}
\label{remark_invert_dirac}
If $m>0$ then every spinor in $P_m(\mR^n)$ has a 
continuous 
extension to~$\mR^n$. 
Furthermore by Proposition \ref{prop_invert_dirac} 
it follows that for all~$m\in(-n,0]$ we have 
\begin{align*}
P_m(\mR^n)&=\sum_{r+s+t=m\atop r\geq-n}
P_{r,s,t}(\mR^n)+P_{m+1}(\mR^n)\\
&\subset D^{\geucl}(P_{m+1}(\mR^n))+P_{m+1}(\mR^n).
\end{align*}
\end{remark}

\begin{lemma}
\label{green_p_2-n_lemma}
Let $(M,g,\Theta)$ be a closed Riemannian spin 
manifold of dimension~$n$ and let~$\lambda\in\mR$. 
Let~$\gamma\in\Sigma_n$ be 
a constant spinor on~$\mR^n$. 
Then the spinor $G^{\geucl}_{\lambda}(.,0)\gamma$ 
is in~$P_{1-n}(\mR^n)$. 
Let the matrix coefficients~$B^j_i$ be defined as 
in~(\ref{dirac_triv}).  
Then for all~$i$ the spinor
\begin{displaymath}
x\mapsto\sum_{j=1}^n 
(B^j_i(x)-\delta^j_i)\nabla_{E_j}
G^{\geucl}_{\lambda}(x,0)\gamma
\end{displaymath}
is in $P_{2-n}(\mR^n)$.
\end{lemma}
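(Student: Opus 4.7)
My plan is to plug in the explicit asymptotic expansion of $G^{\geucl}_{\lambda}(\cdot,0)\gamma$ from Corollary~\ref{coroll_green_asymp} and, for the second claim, exploit an identity supplied by the Gauss lemma which makes the most singular contributions disappear.

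For the first claim I would read off Corollary~\ref{coroll_green_asymp} term by term. The leading pole $-\frac{1}{\omega_{n-1}}|x|^{-n}x\cdot\gamma$ lies in $P_{-n,0,1}(\mR^n)$ (so $r+s+t=1-n$ and $r=-n$); the term $\frac{\lambda}{(n-2)\omega_{n-1}}|x|^{2-n}\gamma$ lies in $P_{2-n,0,0}(\mR^n)$; and each pure logarithmic term $\ln|x|\,\gamma$ lies in $P_{0,0,0}(\mR^n)$. The remaining contributions $|x|^{2-n}\zeta_{\lambda}$, $\ln|x|\,\vartheta_{\lambda}$ and (for $n=2$) $\zeta_{\lambda}$ involve smooth $\Sigma_n$-valued functions that vanish at $0$; expanding these in a basis of $\Sigma_n$ and applying Hadamard's lemma to the resulting scalar factors produces a finite sum of terms of the form $f(x)\,x_j\,|x|^{k}\gamma'$ or $f(x)\,x_j\ln|x|\,\gamma'$ with $f$ smooth and $\gamma'$ constant. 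The closure remark after Definition~\ref{pkmi_def} then places each of these in $P_{1-n}(\mR^n)$.

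For the second claim I would differentiate the expansion using $\nabla_{E_j}|x|^{k}=k|x|^{k-2}x_j$, $\nabla_{E_j}\ln|x|=x_j/|x|^{2}$ and $\nabla_{E_j}(x\cdot\gamma)=E_j\cdot\gamma$. Each summand of $\nabla_{E_j}G^{\geucl}_{\lambda}(\cdot,0)\gamma$ splits into two shapes: a \emph{radial} piece carrying an explicit factor $x_j$ (the source of the most singular $|x|^{-n-2}x_j$ and $|x|^{-n}x_j$ terms) and an \emph{angular} piece with no $x_j$ factor (such as $|x|^{-n}E_j\cdot\gamma$). Forming $\sum_j(B^j_i-\delta^j_i)\cdot(\text{radial})$ factors out
\[
  v_i(x):=\sum_{j=1}^{n}\bigl(B^j_i(x)-\delta^j_i\bigr)\,x_j
\]
multiplied by a purely radial spinor.

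The crucial observation is that $v_i\equiv 0$. Indeed, the Gauss lemma in Riemannian normal coordinates gives $\sum_j g_{ij}(x)x_j=x_i$, i.e.\ $G(x)x=x$; hence $G(x)^{-1}x=x$ and $B(x)^2 x=x$, and since $B(x)$ is positive definite this forces $B(x)x=x$. As $B$ is $\geucl$-self-adjoint, $B^j_i=B^i_j$, so $v_i(x)=\sum_j B^j_i(x)x_j-x_i=\sum_j B^i_j(x)x_j-x_i=(B(x)x)_i-x_i=0$. This kills every radial contribution and, critically, removes exactly the would-be $|x|^{-n-2}$ terms whose exponent $r=-n-2$ would violate the constraint $r\geq-n$ built into the definition of $P_m(\mR^n)$. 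The surviving angular contributions have the form $\sum_j(B^j_i-\delta^j_i)F_j$ with $F_j$ a smooth $\Sigma_n$-valued function times one of $|x|^{-n}$, $|x|^{2-n}$ or $\ln|x|$; since $B^j_i-\delta^j_i=O(|x|^2)$ by (\ref{B-expansion}), Hadamard's lemma decomposes $B^j_i-\delta^j_i=\sum_{a,b}x_a x_b\,h^{j,i}_{ab}(x)$ with $h^{j,i}_{ab}$ smooth, so each summand is a smooth function times an element of $P_{-n,2,0}(\mR^n)$, $P_{2-n,2,0}(\mR^n)$ or $P_{0,2,0}(\mR^n)$, each of which has $r+s+t\geq 2-n$ and $r\geq-n$. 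The smooth-multiplier remark then delivers the second claim. I expect identifying $v_i\equiv 0$ to be the main obstacle; without it the $|x|^{-n-2}$ singularities produced by differentiating $|x|^{-n}$ cannot be accommodated in $P_{2-n}(\mR^n)$ at all.
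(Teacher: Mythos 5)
Your proposal is correct and takes essentially the same route as the paper: the paper also derives the key identity $\sum_{j}B^j_i(x)x_j=x_i$ from the fact that the exponential map is a radial isometry, so that after summation only the terms $\sum_j(B^j_i-\delta^j_i)\frac{g_\lambda'(|x|)}{|x|}E_j\cdot\gamma$ survive, and then concludes with the expansion of $B^j_i$. The only cosmetic difference is that the paper differentiates the exact radial form $\frac{g_\lambda'(|x|)}{|x|}\,x\cdot\gamma+\lambda g_\lambda(|x|)\gamma$ instead of the term-by-term expansion of Corollary~\ref{coroll_green_asymp}.
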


\begin{proof}
The assertion for $G^{\geucl}_{\lambda}(.,0)\gamma$ 
follows immediately from 
Corollary~\ref{coroll_green_asymp}. 
Let~$f_{\lambda}$ be as in 
Theorem~\ref{theorem_green_eucl} and write 
$f_{\lambda}(x)=g_{\lambda}(|x|)$ 
with a func\-tion $g_{\lambda}$: $(0,\infty)\to\mR$. 
Then we have 
\begin{displaymath}
G^{\geucl}_{\lambda}(x,0)\gamma=
\frac{g_{\lambda}'(|x|)}{|x|}x\cdot\gamma
+\lambda g_{\lambda}(|x|)\gamma
\end{displaymath}
and for every $j\in\{1,...,n\}$ we have 
\begin{align*}
\nabla_{E_j}G^{\geucl}_{\lambda}(x,0)\gamma&=
\frac{g_{\lambda}''(|x|)x_j}{|x|^2}x\cdot\gamma
-\frac{g_{\lambda}'(|x|)x_j}{|x|^3}x\cdot\gamma
+\frac{g_{\lambda}'(|x|)}{|x|}E_j\cdot\gamma\\
&{}
+\lambda\frac{g_{\lambda}'(|x|)x_j}{|x|}\gamma.
\end{align*}
As the exponential map is a radial isometry, we 
have 
$\sum_{j=1}^n g_{ij}(x)x_j=x_i$ and thus 
$\sum_{j=1}^n B^j_i(x)x_j=x_i$ for every fixed $i$. 
It follows that 
\begin{displaymath}
\sum_{j=1}^n (B^j_i(x)-\delta^j_i)\nabla_{E_j}
G^{\geucl}_{\lambda}(x,0)\gamma
=\sum_{j=1}^n 
(B^j_i(x)-\delta^j_i)
\frac{g_{\lambda}'(|x|)}{|x|}E_j\cdot\gamma.
\end{displaymath}
Since we have $g_{\lambda}'(|x|)=O(|x|^{1-n})$ 
as~$x\to0$ the assertion now follows from the Taylor 
expansion~(\ref{B-expansion}) of $B^j_i(x)$. 
\end{proof}

Next we prove existence and uniqueness of Green's 
function for~$D^g-\lambda$ 
on a closed Riemannian spin manifold in such a way 
that we also obtain 
the expansion of Green's function at the 
singularity. 
The idea is to apply the equation~(\ref{dirac_triv}) 
for the Dirac operator 
in the trivialization to a Euclidean Green's 
function and then determine 
the correction terms. 
For~$\lambda=0$ this has been carried out 
in~\cite{ammann.humbert:05}, 
where for some technical steps Sobolev embeddings 
were used. 
We present a more simple argument using the 
preimages under the 
Dirac operator from 
Proposition~\ref{prop_invert_dirac}.

In the following for a fixed point $p\in M$ let 
$\rho$: $V\to U$ be a local 
parametrization of~$M$ by Riemannian normal 
coordinates, where the subset $U\subset M$ 
is an open neighborhood of~$p$, $V\subset\mR^n$ is 
an open neighborhood of~$0$ 
and~$\rho(0)=p$. 
Furthermore let 
\begin{displaymath}
\beta:\quad \Sigma\mR^n|_V\to\Sigma^gM|_U,\quad 
A:\quad
C^{\infty}(\Sigma^gM|_U)\to 
C^{\infty}(\Sigma\mR^n|_V) 
\end{displaymath}
denote the maps which send a spinor to its 
corresponding spinor in the 
Bour\-guig\-non-Gau\-du\-chon trivialization 
defined in Section~\ref{bourg-triv_section}.

\begin{theorem}
\label{theorem_green}
Let $(M,g,\Theta)$ be a closed $n$-dimensional 
Riemannian spin manifold, $p\in M$. 
For every $\varphi\in\Sigma^g_pM$ there exists a 
unique Green's function 
$G^g_{\lambda}(.,p)\varphi$. 
If $\gamma:=\beta^{-1}\varphi\in\Sigma_n$ is the 
constant spinor on~$\mR^n$ 
corresponding to~$\varphi$, 
then the first two terms 
of the expansion of $AG^g_{\lambda}(.,p)\varphi$ 
at~$0$ 
coincide with the first two terms of the expansion 
of~$G^{\geucl}_{\lambda}(.,0)\gamma$ given in 
Corollary~\ref{coroll_green_asymp}.
\end{theorem}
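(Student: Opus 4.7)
The plan is to prove uniqueness via elliptic regularity and self-adjointness, and to construct $G^g_\lambda(\cdot,p)\varphi$ by a parametrix method: I would transport the Euclidean Green's function of Theorem~\ref{theorem_green_eucl} to a neighborhood of $p$ using the Bourguignon--Gauduchon trivialization of Section~\ref{bourg-triv_section}, iteratively refine the error using Proposition~\ref{prop_invert_dirac}, and correct the remaining smooth piece via the Fredholm theory of $D^g-\lambda$.

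For uniqueness, if $G_1,G_2$ are two Green's functions I set $H:=G_1(\cdot,p)\varphi-G_2(\cdot,p)\varphi$. Every smooth spinor $\psi$ on $M$ decomposes as $\psi=\psi_0+P\psi$ with $\psi_0\in\im(D^g-\lambda)$ and $P\psi\in\ker(D^g-\lambda)$; since $(D^g-\lambda)\psi=(D^g-\lambda)\psi_0$, subtracting condition~(1) for $G_1$ and $G_2$ applied to $\psi_0$ yields $\int_M\langle (D^g-\lambda)\psi,H\rangle\,\dv^g=0$. Thus $(D^g-\lambda)H=0$ distributionally on $M$, and by elliptic regularity (via $(D^g)^2=\nabla^*\nabla+\scal^g/4$) $H$ extends to a smooth section of $\Sigma^gM$ lying in $\ker(D^g-\lambda)$. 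Condition~(2) forces $H\perp\ker(D^g-\lambda)$, whence $H\equiv 0$.

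For existence, fix a normal parametrization $\rho\colon V\to U$ at $p$, write $\gamma:=\beta^{-1}\varphi$, $\widetilde{G}:=G^{\geucl}_\lambda(\cdot,0)\gamma$, and pick $\chi\in C^\infty(M)$ with $\chi\equiv 1$ near $p$ and $\supp\chi\subset U$. The initial parametrix is $H_0:=\chi\cdot A^{-1}\widetilde{G}$. Near $p$ formula~(\ref{dirac_triv}) gives
\begin{displaymath}
A(D^g-\lambda)H_0=(D^{\geucl}-\lambda)\widetilde{G}+R_0,
\end{displaymath}
where the first summand vanishes on $\mR^n\setminus\{0\}$ and Lemma~\ref{green_p_2-n_lemma} together with the expansions~(\ref{B-expansion}) and~(\ref{gammatilde}) places $R_0$ in $P_{2-n}(\mR^n)$. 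I then iterate: given an error $f_j:=A(D^g-\lambda)H_j\in P_{j+2-n}(\mR^n)$ with $j+2-n\leq 0$, Remark~\ref{remark_invert_dirac} furnishes $K_{j+1},g_{j+1}\in P_{j+3-n}(\mR^n)$ with $D^{\geucl}K_{j+1}=f_j-g_{j+1}$. Setting $H_{j+1}:=H_j-\chi\cdot A^{-1}K_{j+1}$ and reapplying~(\ref{dirac_triv}) produces $f_{j+1}=g_{j+1}+\lambda K_{j+1}+R(K_{j+1})\in P_{j+3-n}(\mR^n)$, the last two summands being controlled by the argument of Lemma~\ref{green_p_2-n_lemma}. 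After $N:=n-1$ steps the error $f:=f_N$ is bounded on $M$ and hence belongs to $L^2(\Sigma^gM)$.

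Finally, self-adjoint Fredholm theory provides a unique $\eta\in\ker(D^g-\lambda)^\perp$ solving $(D^g-\lambda)\eta=f-Pf$, where $P$ now denotes the $L^2$-projection onto $\ker(D^g-\lambda)$; $\eta$ is smooth on $M\setminus\{p\}$ by elliptic regularity. I define $G^g_\lambda(\cdot,p)\varphi:=H_N-\eta+\kappa$ with $\kappa\in\ker(D^g-\lambda)$ chosen to enforce condition~(2). Condition~(1) follows from Green's identity on $M\setminus B_\varepsilon(p)$ (as at the end of the proof of Theorem~\ref{theorem_green_eucl}): the interior contribution reduces to $(\psi,Pf)_{L^2}$, which vanishes for $\psi\in\im(D^g-\lambda)$, while the boundary integral on $\partial B_\varepsilon(p)$ converges to $\langle\psi(p),\varphi\rangle$ as $\varepsilon\to 0$ thanks to the explicit leading singularity $-\omega_{n-1}^{-1}|x|^{1-n}(x/|x|)\cdot\gamma$ of $\widetilde{G}$. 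The expansion assertion is then immediate: near $0$,
\begin{displaymath}
AG^g_\lambda(\cdot,p)\varphi=\widetilde{G}-\sum_{j=1}^N K_j-A\eta+A\kappa,
\end{displaymath}
and every summand beyond $\widetilde{G}$ lies in $P_{3-n}(\mR^n)$ (the $K_j$ by construction, and $A\eta,A\kappa$ as spinors smooth on $V\setminus\{0\}$ and continuous on $V$), so by Corollary~\ref{coroll_green_asymp} the first two terms of $AG^g_\lambda(\cdot,p)\varphi$ coincide with those of $\widetilde{G}$. The main obstacle will be verifying carefully at each iteration step that $-\lambda K_{j+1}$ and the curvature correction $R(K_{j+1})$ actually lie in $P_{j+3-n}(\mR^n)$, so that the error strictly improves and the induction closes.
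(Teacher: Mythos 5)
Your proposal is correct and follows essentially the same route as the paper: a cutoff transplant of $G^{\geucl}_\lambda(\cdot,0)\gamma$ via the Bourguignon--Gauduchon trivialization, an $(n-1)$-step improvement of the error using Remark~\ref{remark_invert_dirac} together with the expansions~(\ref{B-expansion}), (\ref{gammatilde}) and Lemma~\ref{green_p_2-n_lemma}, a global correction by solving $(D^g-\lambda)\eta=f-Pf$ orthogonally to the kernel, and a final kernel adjustment for condition~(2), with the expansion statement following because all corrections lie in $P_{3-n}(\mR^n)$. The only differences are presentational (you spell out uniqueness and the Green's-identity verification of condition~(1) more explicitly than the paper does), and the iteration detail you flag as the ``main obstacle'' is exactly what the paper's induction step settles with~(\ref{B-expansion}) and~(\ref{gammatilde}).
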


\begin{proof}
Let $\varepsilon>0$ such that 
$B_{2\varepsilon}(0)\subset V$ and let 
$\eta:$ $\mR^n\to[0,1]$ be a smooth function with 
$\supp(\eta)\subset B_{2\varepsilon}(0)$ 
and $\eta\equiv1$ on $B_{\varepsilon}(0)$. 
Then the spinor~$\Theta_1$ defined 
on~$\mR^n\setminus\{0\}$ 
by~$\Theta_1(x):=
\eta(x)G^{\geucl}_{\lambda}(x,0)\gamma$ 
is smooth on $\mR^n\setminus\{0\}$. 
For $r\in\{1,...,n\}$ we define smooth 
spinors~$\Phi_r$ on~$M\setminus\{p\}$ 
and~$\Theta_{r+1}$ on~$\mR^n\setminus\{0\}$ 
inductively as follows. 
For~$r=1$ we define 
\begin{displaymath}
  \Phi_1(q):=\left\{\begin{array}{ll}
  A^{-1}\Theta_1(q),&q\in U\setminus\{p\}\\ 
  0,&q\in M\setminus U\end{array}\right.
\end{displaymath}
and 
\begin{displaymath}
  \Theta_2(x):=\left\{\begin{array}{ll}
  A(D^g-\lambda)\Phi_1(x),&x\in V\setminus\{0\}\\ 
  0,&x\in \mR^n\setminus V\end{array}\right. .
\end{displaymath}
By the formula (\ref{dirac_triv}) for the Dirac 
operator in the trivialization we have 
on~$V\setminus\{0\}$
\begin{align*}
\Theta_2
&=(D^{\geucl}-\lambda)\Theta_1
+\sum_{i,j=1}^n(B^j_i-\delta^j_i)E_i\cdot
\nabla_{E_j}\Theta_1\\
&{}+\frac{1}{4}\sum_{i,j,k=1}^n
\widetilde{\Gamma}^k_{ij} E_i\cdot E_j\cdot E_k
\cdot \Theta_1.
\end{align*}
The first term vanishes on 
$B_{\varepsilon}(0)\setminus\{0\}$. 
It follows from 
the expansions of~$\widetilde{\Gamma}^k_{ij}$ 
and~$B^j_i-\delta^j_i$ 
in (\ref{B-expansion}), (\ref{gammatilde}) and from 
Lemma \ref{green_p_2-n_lemma} 
that $\Theta_2\in P_{2-n}(\mR^n)$.

Next let $r\in\{2,...,n\}$ and assume 
that~$\Phi_{r-1}$ and~$\Theta_r$ 
have already been defined. 
By induction hypothesis we may assume that we have 
$\Theta_r\in P_{r-n}(\mR^n)$. 
By Remark~\ref{remark_invert_dirac} there exists 
$\beta_{r+1}\in P_{r+1-n}(\mR^n)$ such 
that~$\Theta_r-(D^{\geucl}-\lambda)\beta_{r+1}\in 
P_{r+1-n}(\mR^n)$. 
We define~$\Phi_r$ and~$\Theta_{r+1}$ by 
\begin{displaymath}
  \Phi_r(q):=\left\{\begin{array}{ll}
  \Phi_{r-1}(q)-A^{-1}(\eta\beta_{r+1})(q),&q\in
  U\setminus\{p\}\\ 
  0,&q\in M\setminus U\end{array}\right.
\end{displaymath}
and 
\begin{displaymath}
  \Theta_{r+1}(x):=\left\{\begin{array}{ll}
  A(D^g-\lambda)\Phi_r(x),&x\in V\setminus\{0\}\\ 
  0,&x\in \mR^n\setminus V\end{array}\right. .
\end{displaymath}
By the formula (\ref{dirac_triv}) for the Dirac 
operator in the trivialization we have 
on~$B_{\varepsilon}(0)\setminus\{0\}$
\begin{align*}
\Theta_{r+1}&=
A(D^g-\lambda)\Phi_{r-1}-A(D^g-\lambda)A^{-1}
\beta_{r+1}\\
&=\Theta_r-(D^{\geucl}-\lambda)\beta_{r+1}
-\sum_{i,j=1}^n(B^j_i-\delta^j_i)E_i
\cdot\nabla_{E_j}\beta_{r+1}\\
&{}-\frac{1}{4}\sum_{i,j,k=1}^n
\widetilde{\Gamma}^k_{ij} E_i\cdot E_j\cdot E_k
\cdot \beta_{r+1}.
\end{align*}
Using the expansions of $\widetilde{\Gamma}^k_{ij}$ 
and $B^j_i-\delta^j_i$ 
in (\ref{B-expansion}), (\ref{gammatilde}) we 
conclude that we have 
$\Theta_{r+1}\in P_{r+1-n}(\mR^n)$. 

We see that~$\Theta_{n+1}$ has a continuous 
extension to~$\mR^n$ 
and we obtain a continuous extension~$\Psi$ of 
$(D^g-\lambda)\Phi_n$ to all of~$M$. 
Thus there exists
\begin{displaymath}
\Psi'\in
C^{\infty}(\Sigma^gM|_{M\setminus\{p\}})
\cap H^1(\Sigma^gM)
\end{displaymath}
such that $(D^g-\lambda)\Psi'=P\Psi-\Psi$. Define
\begin{displaymath}
  \Gamma:=\Phi_n+\Psi',\quad
  \Theta:=-\eta\beta_3-...-\eta\beta_{n+1}+A\Psi'.
\end{displaymath}
Then on $B_{\varepsilon}(0)\setminus\{0\}$ 
we have 
$A\Gamma=G^{\geucl}_{\lambda}(.,0)\gamma+\Theta$.

If $\psi_1,...,\psi_d$ is an $L^2$-orthonormal basis 
of $\ker(D^g-\lambda)$, then for 
every number~$i\in\{1,...,d\}$ 
the integral 
\begin{displaymath}
C_i:=\int_{M\setminus\{p\}}\langle\Gamma,\psi_i
\rangle\,\dv^g
\end{displaymath}
exists, since we have 
$|A\Gamma(x)|_{\geucl}=O(|x|^{1-n})$ as 
$x\to0$. 
Then 
\begin{displaymath}
  G^g_{\lambda}(.,p)\varphi:=\Gamma-
  \sum_{i=1}^{d}C_i\psi_i
\end{displaymath}
satisfies (\ref{green_function1}), 
(\ref{green_function2}) 
and thus is a Green's function. 
Uniqueness also follows from 
(\ref{green_function1}), (\ref{green_function2}). 
The statement on the expansion of 
$AG^g_{\lambda}(.,p)\varphi$ is obvious, since we 
have $\Theta\in P_{3-n}(\mR^n)$. 
\end{proof}

\section{Zero sets of eigenspinors}
\label{gen_eigenspinor_chapter}

\subsection{Eigenspinors in dimensions $2$ and $3$}
\label{gen_eigenspinor_section}

In this section we prove 
Theorem~\ref{main_theorem}. 
Assume that $n\in\{2,3\}$. 
Then there exists a quaternionic structure on 
the spinor bundle, 
i.\,e.\,a conjugate linear endomorphism~$J$ 
of~$\Sigma^gM$, 
which satisfies $J^2=-\id$. 
Furthermore~$J$ is parallel and commutes 
with Clifford multiplication 
(see e.\,g.\,\cite{friedrich:00}, p.\,33). 
It follows that~$J$ commutes with the Dirac 
operator and thus every eigenspace 
of~$D^g$ has even complex dimension. 
Therefore the following notation introduced 
by Dahl~(see \cite{dahl:03}) is useful. 

\begin{definition}
Let $n\in\{2,3\}$. An eigenvalue~$\lambda$ 
of~$D^g$ is called simple, 
if one has $\dim_{\mC}\ker(D^g-\lambda)=2$.
\end{definition}

If $\lambda$ is a simple eigenvalue of~$D^g$, 
then one can choose an~$L^2$-ortho\-nor\-mal 
basis of $\ker(D^g-\lambda)$ 
of the form $\{\psi,J\psi\}$.

For every $g\in R(M)$ we enumerate the nonzero 
eigenvalues of~$D^g$ in the following way
\begin{displaymath}
  ...\leq\lambda_{-2}(g)
  \leq\lambda_{-1}(g)
  <0
  <\lambda_1(g)
  \leq\lambda_2(g)\leq...\,.
\end{displaymath}
Here all the non-zero eigenvalues are repeated 
by half of their complex multiplicities, 
while $\dim\ker(D^g)\geq0$ is arbitrary. 
For $m\in\mN\setminus\{0\}$ we define 
\begin{align*}
S_m(M)&:=\{g\in R(M)\,|
\lambda_{-m}(g),...,\lambda_m(g)
\textrm{ are simple}\}\\
N_m(M)&:=\Big\{g\in R(M)\,\Big|
\begin{array}{l}
\textrm{all eigenspinors to }
\lambda_{-m}(g),...,\lambda_m(g)\\
\textrm{are nowhere zero}
\end{array}
\Big\}.
\end{align*}

\begin{remark}
Let $(M,\Theta)$ be a closed spin manifold of 
dimension~$2$ or~$3$. 
It is known that there exists a subset 
$S_0(M,\Theta)\subset R(M)$ 
which is open and dense in~$R(M)$
such that the map 
\begin{displaymath}
S_0(M,\Theta)\to\mN,\quad 
g\mapsto\dim_{\mC}\ker(D^g)
\end{displaymath}
is constant. 
Furthermore for $n=3$ this constant is~$0$ 
while for $n=2$ it is either~$0$ or~$2$ 
depending on the topology of~$M$ and on the spin 
structure~$\Theta$ 
(see \cite{maier:97}, \cite{ammann.dahl.humbert:11}). 
Dahl has shown that for $n\in\{2,3\}$ and 
for every $m\in\mN\setminus\{0\}$ the subset 
$S_0(M,\Theta)\cap S_m(M)$ is open and dense 
in~$R(M)$. 
His proof shows that for every 
$g\in R(M)$ and for every~$m\in\mN\setminus\{0\}$ 
the subset $S_m(M)\cap[g]$ is open and dense 
in~$[g]$. 
All the statements in this remark hold with 
respect to all $C^k$-topologies, $k\geq1$, 
on $R(M)$. 
\end{remark}

Since for $n=3$ for all $g\in S_0(M,\Theta)$ 
we have $\dim_{\mC}\ker(D^g)=0$, 
it is sufficient for the proof of 
Theorem~\ref{main_theorem} 
in both cases $n\in\{2,3\}$
to consider non-harmonic eigenspinors. 
More precisely it is 
sufficient to prove the following theorem. 

\begin{theorem}
\label{theorem_no_zeros}
Let $M$ be a closed connected spin manifold of 
dimension~$2$ or~$3$ and 
let~$m\in\mN\setminus\{0\}$. 
Then $S_0(M,\Theta)\cap N_m(M)$ is open and dense 
in~$R(M)$. 
Furthermore for every~$g\in R(M)$ the subset 
$N_m(M)\cap[g]$ is open and dense in~$[g]$. 
\end{theorem}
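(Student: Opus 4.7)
The plan is to establish openness and density separately. Openness of $S_0(M,\Theta)\cap N_m(M)$ in $R(M)$ and of $N_m(M)\cap[g]$ in $[g]$ follows from standard perturbation theory: on the open-dense subset $S_0(M,\Theta)\cap S_m(M)$, the eigenvalues $\lambda_{-m}(g),\ldots,\lambda_m(g)$ and suitable $L^2$-normalized choices of their eigenspinors depend continuously on the metric in $C^1$-topology (real-analytically along smooth curves, by Lemma \ref{rellich_lemma}), after transporting spinors via $\overline{\beta}_{g,h}$, so ``nowhere zero'' is an open condition. Density in $R(M)$ reduces to density within each conformal class $[g]$ for $g\in S_0\cap S_m$, which is what I will establish.

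Fix $g_0\in S_0(M,\Theta)\cap S_m(M)$ and an index $j$ with $0<|j|\leq m$; write $\lambda:=\lambda_j(g_0)$ and let $\psi_0$ be an $L^2$-normalized eigenspinor. On a small $V\subset C^\infty(M,\mR)$ about $0$, the conformal perturbations $g_u=e^{2u}g_0$ preserve simplicity of $\lambda(u)$, and analytic perturbation theory furnishes a $C^1$-map $F\colon V\to C^1(M,\Sigma^{g_0}M)$ by $F(u):=\overline{\beta}_{g_u,g_0}\psi_u$. Let $A\subset\Sigma^{g_0}M$ denote the zero section. The real dimension count
\begin{displaymath}
\dim M+\dim A-\dim_{\mR}\Sigma^{g_0}M=n+n-(n+2\cdot 2^{[n/2]})=n-4<0\quad\text{for }n\in\{2,3\}
\end{displaymath}
together with Theorem \ref{param_transvers} (with $r=1$) shows that transversality of $F^{ev}\colon V\times M\to\Sigma^{g_0}M$ to $A$ forces $F(u)$ to be nowhere zero for a residual set of $u\in V$. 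Intersecting finitely many such residual sets over the indices $|j|\leq m$ then yields density of $N_m(M)\cap[g_0]$ in $[g_0]$.

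It remains to prove $F^{ev}$ is transverse to $A$. Suppose this fails at $(u_0,p_0)$; replacing $g_0$ by $g_{u_0}$ we may take $u_0=0$. Then $\psi:=\psi_0$ satisfies $\psi(p_0)=0$, and there is a nonzero $\varphi\in\Sigma^{g_0}_{p_0}M$ with \textbf{(a)} $\Re\langle dF(0)(f)(p_0),\varphi\rangle=0$ for every $f\in C^\infty(M,\mR)$ and \textbf{(b)} $\Re\langle\nabla^{g_0}_X\psi(p_0),\varphi\rangle=0$ for every $X\in T_{p_0}M$. By the conformal variation formulas \eqref{dirac_derivative_conform}--\eqref{lambda_derivative_conform}, the first-order variation $\phi_f$ solves
\begin{displaymath}
(D^{g_0}-\lambda)\phi_f=\mu\psi+\tfrac{\lambda}{2}f\psi+\tfrac{1}{4}\grad^{g_0}f\cdot\psi,\qquad \mu=-\tfrac{\lambda}{2}\int f|\psi|^2\,\dv^{g_0}.
\end{displaymath}
Pairing against $G^{g_0}_\lambda(\cdot,p_0)\varphi$ via \eqref{greens_function}, the $\mu\psi$ contribution vanishes by \eqref{green_function2}, while $\grad^{g_0}f\cdot\psi=(D^{g_0}-\lambda)(f\psi)$ contributes $-\Re\langle P(f\psi)(p_0),\varphi\rangle$, which also vanishes because $\psi(p_0)=J\psi(p_0)=0$ forces $P(f\psi)(p_0)\in\spann\{\psi(p_0),J\psi(p_0)\}=\{0\}$. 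Condition \textbf{(a)} thus collapses to
\begin{displaymath}
\tfrac{\lambda}{2}\int_M f\,\Re\langle\psi(x),G^{g_0}_\lambda(x,p_0)\varphi\rangle\,\dv^{g_0}(x)=0\quad\text{for all }f\in C^\infty(M,\mR),
\end{displaymath}
and since $\lambda\neq 0$ the function $u(x):=\Re\langle\psi(x),G^{g_0}_\lambda(x,p_0)\varphi\rangle$ vanishes identically on $M\setminus\{p_0\}$.

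The main obstacle is extracting a contradiction from $u\equiv 0$. In Bourguignon-Gauduchon normal coordinates around $p_0$, Theorem \ref{theorem_green} and Corollary \ref{coroll_green_asymp} identify the first two terms of $AG^{g_0}_\lambda(\cdot,p_0)\varphi$ as
$-\frac{x\cdot\gamma}{\omega_{n-1}|x|^n}+\frac{\lambda\gamma}{(n-2)\omega_{n-1}|x|^{n-2}}$
for $n=3$ (with an additional $-\tfrac{\lambda}{2\pi}\ln|x|\gamma$ for $n=2$), where $\gamma:=\beta^{-1}\varphi$. Writing $A\psi(x)=\sum_j x_jv_j+O(|x|^2)$ with $v_j=A(\nabla^{g_0}_{e_j}\psi)(p_0)$ and expanding $u\equiv 0$ order by order, the leading singular term yields $\Re\langle v_j,E_k\cdot\gamma\rangle+\Re\langle v_k,E_j\cdot\gamma\rangle=0$; condition \textbf{(b)} gives $\Re\langle v_j,\gamma\rangle=0$; and the pointwise Dirac identity $D^{g_0}\psi(p_0)=\lambda\psi(p_0)=0$ yields $\sum_i E_i\cdot v_i=0$. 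Exploiting that $\Sigma_n$ is one-dimensional over the quaternions for $n\in\{2,3\}$, these equations combine to force $v_j=0$, i.e.\ $\nabla^{g_0}\psi(p_0)=0$. At the next order, the mass term $\tfrac{\lambda\gamma}{(n-2)\omega_{n-1}|x|^{n-2}}$---nonzero precisely because $\lambda\neq 0$---produces an analogous orthogonality condition $\Re\langle w_{jk},\gamma\rangle=0$ on the second Taylor coefficient, which together with Dirac's $\sum_i E_i\cdot w_{ij}=0$ forces $w_{jk}=0$. Iterating this pattern, every Taylor coefficient of $\psi$ at $p_0$ vanishes. Aronszajn's Theorem \ref{unique_cont} applied to $(D^{g_0})^2-\lambda^2=\nabla^*\nabla+\tfrac{\scal^{g_0}}{4}-\lambda^2$ (via \eqref{schroed_lichn}) then gives $\psi\equiv 0$, contradicting $\|\psi\|_{L^2}=1$. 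The hypotheses $\lambda\neq 0$ (so the mass term of the Green's function does not vanish at each order) and $n\in\{2,3\}$ (so the quaternionic linear algebra is tight enough to eliminate each Taylor coefficient) are both essential precisely at this iterative algebraic step.
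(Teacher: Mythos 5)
Your overall strategy tracks the paper closely: openness via continuity of eigenvalues/eigenspinors, density via a parametric transversality theorem for conformal families, and verification of transversality by the Green's function identity $\Re\langle\psi,G^{g}_\lambda(\cdot,p)\varphi\rangle\equiv 0$, an order-by-order Taylor analysis at the zero using $\lambda\neq 0$ and $n\in\{2,3\}$, and Aronszajn's theorem applied to $(D^g)^2-\lambda^2$. However, there is a genuine gap at the step where you invoke Theorem \ref{param_transvers} with parameter space ``a small $V\subset C^\infty(M,\mR)$ about $0$.'' That theorem is stated for manifolds $V$ (finite-dimensional in Hirsch; Banach manifolds with a Sard--Smale/Fredholm setup in Uhlenbeck), and $C^\infty(M,\mR)$ is a Fr\'echet space, so the residual-set conclusion is simply not covered by the quoted result, nor is your claim that $F$ is a $C^1$ map on such a domain established. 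The obvious repair, replacing $C^\infty$ by the Banach space $C^k(M,\mR)$, destroys the other half of your argument: for non-smooth conformal factors $u$ the metric $e^{2u}g_0$ is only $C^k$, the eigenspinors are not smooth, and your verification of transversality at an arbitrary zero $(u_0,p_0)$ --- which requires replacing $g_0$ by $g_{u_0}$, expanding the eigenspinor to all orders, and applying the unique continuation theorem --- cannot be carried out there. This is exactly the obstruction the paper points out before introducing the finite-dimensional families $V_{f_1\ldots f_s}$.

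What is missing is the reduction from ``the conformal directions $fg$, $f\in C^\infty(M,\mR)$, span the fiber $\Sigma^g_pM$ at each zero $p$'' to a \emph{finite-dimensional} parameter space on which the transversality theorem genuinely applies. In the paper this is the content of Lemma \ref{lemma_transverse} (four functions $f_1,\ldots,f_4$ suffice at each zero, since $\dim_{\mR}\Sigma^g_pM=4$), followed by a compactness argument: cover the zero set of each of the finitely many eigenspinors by finitely many neighborhoods on which fixed $4$-tuples work, assemble all these functions into one family $V_{f_1\ldots f_s}$, shrink to an open $W$ where transversality persists, and only then apply Theorem \ref{param_transvers} with $r=1$ (using the $C^1$-regularity of $F^{ev}_\psi$ restricted to such finite-dimensional families, Lemma \ref{lemma_F_psi}(3)). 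Your Green's-function/Taylor argument is the right engine for this lemma --- though note that in dimension $3$ the derivation of $\Re\langle\gamma,\nabla_{E_{j_1}}\cdots\nabla_{E_{j_r}}A\psi(0)\rangle=0$ is substantially harder than your sketch suggests, since the mass term mixes derivatives of orders $r$ and $r+1$ and the paper must combine three monomial coefficients using the Schr\"odinger--Lichnerowicz formula --- but without the finite-dimensionalization step the transversality theorem you rely on does not apply, so the density argument as written does not go through.
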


In order to show density we will use 
Theorem~\ref{param_transvers} 
for families of spinors para\-me\-tri\-zed by 
Riemannian metrics. 
At first one might want to use 
the infinite-dimensional 
version of Theorem~\ref{param_transvers}, 
where~$V$ is equal to the space of 
all~$C^k$-metrics on~$M$ for some 
fixed $k\geq1$ (see~\cite{uhlenbeck:76}).  
However for non-smooth metrics~$g$ the 
coefficients of the 
Dirac operator~$D^g$ in a local chart and a local 
trivialization are not smooth by~(\ref{dirac_triv}) 
and thus we cannot expect that the eigenspinors 
are smooth 
and that we can apply Theorem~\ref{unique_cont}. 
Therefore we will use Theorem~\ref{param_transvers} 
with~$V$ equal to a finite-dimensional manifold 
contained in~$[g]$ of the form 
\begin{displaymath}
  V_{f_1...f_s}:=\Big\{\Big(1+\sum_{i=1}^s 
  t_i f_i\Big)g\,\Big|t_1,...,t_s\in\mR\Big\}
  \cap R(M),
\end{displaymath}
where $s\in\mN\setminus\{0\}$ and 
$f_1,...,f_s\in C^{\infty}(M,\mR)$.

Our first aim is to construct a map, 
which assigns to a Riemannian metric~$h$ 
an eigenspinor of~$D^{g,h}$ in a continuous way, 
where $g\in R(M)$ is fixed.

\begin{lemma}
\label{lemma_F_psi}
Let $m\in\mN\setminus\{0\}$ 
and let $g\in S_0(M,\Theta)\cap S_m(M)$. 
There exists an open neighborhood $V\subset R(M)$ 
of~$g$ such that for every number 
$i\in\{-m,...,m\}\setminus\{0\}$ 
and for every~$L^2$-normalized 
eigenspinor~$\psi$ of~$D^g$ corresponding 
to~$\lambda_i(g)$ there exists 
a map~$F_{\psi}$: $V\to C^{\infty}(\Sigma^gM)$ 
with the following properties: 
\begin{enumerate}
\item For every $h\in V$ the spinor~$F_{\psi}(h)$ 
is an eigenspinor of~$D^{g,h}$ corresponding 
to~$\lambda_i(h)$. 
\item The map $F^{ev}_{\psi}$: $V\times M\to\Sigma^gM$ 
defined by $F^{ev}_{\psi}(h,x):=F_{\psi}(h)(x)$ 
is continuous. 
\item For all functions 
$f_1,...,f_s\in C^{\infty}(M,\mR)$ 
the restriction of the map 
$F^{ev}_{\psi}$ to~$(V_{f_1...f_s}\cap V)\times M$ 
is a $C^1$-map.
\end{enumerate}
\end{lemma}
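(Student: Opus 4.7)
The plan is to build $F_\psi$ via Kato's analytic perturbation theory applied to the self-adjoint family $\{D^{g,h}\}_{h\in V}$ on $L^2(\Sigma^gM)$, and then verify the regularity properties (2) and (3) by separate bootstrap arguments. Concretely, first I would choose $V$ as a small neighborhood of $g$ contained in the open set $S_0(M,\Theta)\cap S_m(M)$, shrunk so that, for each $i\in\{-m,\ldots,m\}\setminus\{0\}$, a fixed small circle $\Gamma_i\subset\mC$ around $\lambda_i(g)$ encloses only the eigenvalue $\lambda_i(h)$ of $D^{g,h}$ for every $h\in V$. Such a $V$ exists because the $C^1$-topology on $R(M)$ controls the coefficients of $D^{g,h}$ (acting on $\Sigma^gM$ via $\overline{\beta}_{g,h}$) in $C^0$-norm, so that $(D^{g,h}-z)^{-1}$ is norm-continuous in $h$ for $z$ off the spectrum, whence the spectrum varies Hausdorff-continuously. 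Then I would define
\[
P_h := \frac{1}{2\pi i}\oint_{\Gamma_i}(D^{g,h}-z)^{-1}\,dz,
\]
the $L^2$-orthogonal projection onto $\ker(D^{g,h}-\lambda_i(h))$, and set $F_\psi(h) := P_h\psi/\|P_h\psi\|_{L^2}$. Since $P_g\psi=\psi\neq0$ and $h\mapsto P_h$ is norm-continuous, $P_h\psi$ remains nonzero after a further shrinking of $V$, giving (1); the section $F_\psi(h)$ is smooth by elliptic regularity applied to the smooth operator $D^{g,h}$.

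For the continuity claim (2), by the Sobolev embedding it suffices to prove that $h\mapsto F_\psi(h)\in H^k(\Sigma^gM)$ is continuous for some $k>n/2$. Norm-continuity of $P_h$ on $L^2$ immediately yields $L^2$-continuity of $F_\psi(h)$. To bootstrap, I would use the eigenspinor equation $D^{g,h}F_\psi(h)=\lambda_i(h)F_\psi(h)$ together with the fact that, although the $C^1$-topology on $R(M)$ only provides $C^0$-control on the coefficients of $D^{g,h}$, each individual $h\in V$ is smooth and, after a final shrinking of $V$, its higher derivatives are uniformly bounded on compact subsets. Standard local elliptic estimates then propagate $L^2$-continuity into $H^k$-continuity for every $k$. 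I expect this to be the main obstacle: making the bootstrap honest in the $C^1$-topology, the key point being that only continuity (not smoothness) of the coefficients is required at each stage.

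For (3), on $V_{f_1\ldots f_s}$ the metric is $h_t=(1+\sum_i t_if_i)g$ with $t=(t_1,\ldots,t_s)\in\mR^s$, and the explicit conformal identity
\[
D^{g,h_t}\varphi = \bigl(1+\textstyle\sum_i t_if_i\bigr)^{-1/4}D^g\bigl((1+\textstyle\sum_i t_if_i)^{-1/4}\varphi\bigr)
\]
exhibits $(D^{g,h_t})_t$ as a real-analytic family of self-adjoint operators in the Kato sense, by the same mechanism already used for Lemma~\ref{rellich_lemma}. Hence $t\mapsto(D^{g,h_t}-z)^{-1}$ is real-analytic in operator norm for $z$ off the spectrum, so $t\mapsto P_{h_t}$ and $t\mapsto F_\psi(h_t)$ are real-analytic with values in $L^2$. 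The same bootstrap as in (2) upgrades this to real-analyticity with values in any $H^k$, and hence in $C^\infty(\Sigma^gM)$; joint smoothness, and a fortiori $C^1$-regularity, of $F_\psi^{ev}$ on $(V_{f_1\ldots f_s}\cap V)\times M$ then follows.
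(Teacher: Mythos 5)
Your construction of $F_\psi$ via Riesz projections and properties (1), (3) are essentially sound (for (3) the metrics $h_t=(1+\sum_i t_if_i)g$ involve only the fixed smooth functions $f_i$, so all higher derivatives are uniformly controlled on compact $t$-sets and your bootstrap is legitimate there). The genuine gap is in your proof of (2). The neighborhood $V$ is open in the $C^1$-topology, and your key assertion that ``after a final shrinking of $V$, its higher derivatives are uniformly bounded'' is false: a $C^1$-neighborhood of $g$ in $R(M)$ contains smooth metrics with arbitrarily large second (and higher) derivatives, and no shrinking in the $C^1$-topology can remove them. Consequently the elliptic bootstrap from $L^2$ to $H^k$ with $k>n/2$ (hence $k\geq 2$ for $n=2,3$) does not go through: already the step from $H^1$ to $H^2$ requires differentiating the equation $D^{g,h}F_\psi(h)=\lambda_i(h)F_\psi(h)$, and the resulting constants and error terms involve first derivatives of the coefficients of $D^{g,h}$, i.e.\ second derivatives of $h$, which are not controlled along a sequence $k_j\to h$ in $C^1$. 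So as written you only obtain $H^1$-continuity of $h\mapsto F_\psi(h)$, which does not embed into $C^0$ in dimensions $2$ and $3$, and continuity of $F^{ev}_\psi$ on all of $V\times M$ is not established.

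The paper circumvents exactly this difficulty by avoiding any elliptic bootstrap: it regards $D^{g,h}$ and $D^{g,k_j}$ as closed operators on the Banach space $C^0(\Sigma^gM)$ with domain $C^1(\Sigma^gM)$. Since $\|(D^{g,k_j}-D^{g,h})\varphi\|_{C^0}$ is controlled by the $C^1$-distance of the metrics times $\|\varphi\|_{C^1}$, Kato's Theorem IV.3.16 gives convergence of the spectral projections onto the isolated eigenspaces in the operator norm on $C^0(\Sigma^gM)$, hence $C^0$-convergence of the selected eigenspinors, which is exactly what (2) needs and uses only $C^1$-data of the metrics. If you want to stay in your $L^p$/Sobolev framework you would have to replace the $H^k$-bootstrap by first-order $L^p$-elliptic estimates whose constants depend only on the $C^0$-modulus of the leading coefficients and the $C^0$-norm of the zeroth-order terms (both controlled by the $C^1$-topology), iterating integrability up to $W^{1,p}$ for large $p$ and then using $W^{1,p}\hookrightarrow C^0$; but some such replacement of the higher-derivative bound is indispensable, and your current argument does not supply it.
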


\begin{proof}
Let $h\in R(M)$. 
For every $t\in[0,1]$ the tensor 
$g_t:=g+t(h-g)$ is a Riemannian metric on~$M$. 
Then by Lemma \ref{rellich_lemma} there exist 
real analytic functions 
\begin{displaymath}
[0,1]\ni t\mapsto\lambda^h_{i,t}\in\mR,
\quad i\in\{-m,...,m\}\setminus\{0\},
\end{displaymath}
such that for every 
$i\in\{-m,...,m\}\setminus\{0\}$ 
and for every $t\in[0,1]$ the 
number~$\lambda^h_{i,t}$ 
is an eigenvalue of~$D^{g,g_t}$ and such that 
for every~$i$ we have 
$\lambda^h_{i,0}=\lambda_i(g)$. 
There exists an open neighborhood~$V$ 
of $g$ which is contained in $S_0(M,\Theta)$ 
such that for every~$h\in V$, 
for every number $i\in\{-m,...,m\}\setminus\{0\}$ 
and for every $t\in[0,1]$ we have 
\begin{equation}
\label{F_psi_proof}
\lambda^h_{i,t}=\lambda_i(g_t),
\end{equation}
i.\,e.\,the eigenvalue functions are nowhere 
zero on $[0,1]$ and their graphs do not 
intersect. 
This follows from the continuity of the 
eigenvalues of the Dirac operator in the 
$C^1$-topology 
(see e.\,g.\,Proposition 7.1 in~\cite{baer:96a}) 
together with the fact that for 
every $h\in S_0(M,\Theta)$ 
we have $\dim\ker(D^h)=\dim\ker(D^g)$.

Next let $h\in V$, let 
$i\in\{-m,...,m\}\setminus\{0\}$ 
and let $\psi$ be an eigenspinor of~$D^g$ 
corresponding 
to $\lambda:=\lambda_i(g)$. 
By Lemma~\ref{rellich_lemma} there exists a 
real analytic family $t\mapsto\lambda_t$, 
$t\in[0,1]$, 
of eigenvalues of~$D^{g,g_t}$ such that 
$\lambda_0=\lambda$ 
and there exists a real analytic family 
$t\mapsto\chi_t\in C^{\infty}(\Sigma^gM)$, 
$t\in[0,1]$, of~$L^2$-normalized eigenspinors 
of $D^{g,g_t}$ corresponding to $\lambda_t$. 
If $t\mapsto\zeta_t$, 
$t\in[0,1]$, is another such family
of eigenspinors, then 
there exist analytic functions 
$a,b$: $[0,1]\to\mC$ such that for all $t\in[0,1]$ 
we have $\zeta_t=a(t)\chi_t+b(t)J\chi_t$ 
and $|a(t)|^2+|b(t)|^2=1$. 
In order to fix a family of eigenspinors we 
apply a unitary transformation to~$\chi_t$ 
such that for every~$t$ small enough 
its component in $\ker(D^g-\lambda)$ 
becomes a positive multiple of~$\psi$. 
More precisely for every $t\in[0,1]$ 
we define $f(t):=(\chi_t,\psi)_{L^2}$ 
and $g(t):=(\chi_t,J\psi)_{L^2}$ 
and for every $t\in[0,1]$ such that 
$|f(t)|^2+|g(t)|^2>0$ we define 
\begin{displaymath}
\psi_t:=
\frac{\overline{f(t)}\chi_t-g(t)J\chi_t}
{(|f(t)|^2+|g(t)|^2)^{1/2}}.
\end{displaymath}
Then for all $t\in[0,1]$ such that 
$|f(t)|^2+|g(t)|^2>0$ the spinor~$\psi_t$ 
is an eigenspinor of~$D^{g,g_t}$ corresponding 
to~$\lambda_t$ and we have $\psi_0=\psi$. 
By this definition we have fixed a family 
of eigenspinors. 
After possibly shrinking~$V$ we may assume 
that for all metrics $h\in V$ the family 
$t\mapsto\psi_t$ is defined for all $t\in[0,1]$. 
We define $F_{\psi}(h):=\psi_1$. 
By~(\ref{F_psi_proof}) this eigenspinor 
of~$D^{g,h}$ corresponds to~$\lambda_i(h)$. 
If $\xi\in\ker(D^g-\lambda)$ is 
another~$L^2$-normalized eigenspinor 
then there exist 
$c,d\in\mC$ such that $|c|^2+|d|^2=1$ and 
$\xi=c\psi+dJ\psi$. 
It follows that the family 
$t\mapsto\xi_t:=c\psi_t+dJ\psi_t$ is defined 
for all $t\in[0,1]$. 
Thus for every~$L^2$-normalized eigenspinor 
$\xi\in\ker(D^g-\lambda)$ and for every $h\in V$ 
we can define~$F_{\xi}(h)$. 
For every $i\in\{-m,...,m\}\setminus\{0\}$ 
we obtain an open neighborhood $V\subset R(M)$ 
of~$g$ as above 
and after taking the intersection of these 
open neighborhoods we may assume that~$V$ is 
independent of~$i$. 
The first assertion follows. 

In order to show the second assertion 
let $h\in V$ and let~$(k_j)_{j\in\mN}$ 
be a sequence in~$V$ such that 
$k_j\to h$ as $j\to\infty$. 
Let $i\in\{-m,...,m\}\setminus\{0\}$, 
let~$\psi$ be an eigenspinor of~$D^g$ 
corresponding to~$\lambda_i(g)$. 
We define the~$L^2$-orthogonal projections 
\begin{align*}
P_j:&\quad  
C^{\infty}(\Sigma^gM)\to
\ker(D^{g,k_j}-\lambda_i(k_j)),\quad j\in\mN\\
P:&\quad C^{\infty}(\Sigma^gM)\to
\ker(D^{g,h}-\lambda_i(h)).
\end{align*}
We regard the operators $D^{g,h}$, 
$D^{g,k_j}$, $j\in\mN$, as closed operators 
on the space~$C^0(\Sigma^gM)$ with domain 
$C^1(\Sigma^gM)$. 
Then by Theorem~IV.3.16 in~\cite{kato:95} 
we get $P_j\to P$ with respect to the norm 
of bounded linear operators on~$C^0(\Sigma^gM)$. 
In particular for all $j\in\mN$ large enough 
the spinor 
$\alpha_j:=\frac{P_j(F_{\psi}(h))}
{\|P_j(F_{\psi}(h))\|_{L^2}}$ 
is well defined and we have 
$\|\alpha_j-F_{\psi}(h)\|_{C^0(\Sigma^gM)}\to0$ 
as~$j\to\infty$. 
By definition of $F_{\psi}$ we have for 
every $j\in\mN$: 
\begin{displaymath}
F_{\psi}(k_j)=
\frac{(\psi,\alpha_j)_{L^2}\alpha_j-
(\alpha_j,J\psi)_{L^2}J\alpha_j}
{(|(\psi,\alpha_j)_{L^2}|^2+
|(J\psi,\alpha_j)_{L^2}|^2)^{1/2}}.
\end{displaymath}
Let $x\in M$ and let $(x_j)_{j\in\mN}$ be a 
sequence in~$M$ such that $x_j\to x$ 
as~$j\to\infty$. 
It follows that 
$F_{\psi}^{ev}(k_j,x_j)\to F_{\psi}^{ev}(h,x)$ 
as $j\to\infty$. 
The second assertion follows. 

In order to prove the third assertion let 
$h,k\in V_{f_1...f_s}\cap V$ and let $x\in M$. 
For~$t\in[0,1]$ we define $h_t:=h+t(k-h)$ 
and $\psi_t:=F_{\psi}(h_t)$. 
The family $t\mapsto\psi_t$ is real analytic. 
In order to see this we use that 
by Lemma \ref{rellich_lemma} there exists 
a real analytic family 
$t\mapsto\alpha_t\in C^{\infty}(\Sigma^hM)$ 
of $L^2$-normalized eigenspinors of $D^{h,h_t}$. 
For all~$t\in[0,1]$ we have 
$\overline{\beta}_{h,h_t}^{-1}
=\overline{\beta}_{h_t,h}$ and thus 
\begin{displaymath}
\overline{\beta}_{h_t,g}
\overline{\beta}_{h,h_t}
D^{h,h_t}=D^{g,h_t}
\overline{\beta}_{h_t,g}
\overline{\beta}_{h,h_t}.
\end{displaymath}
Thus the family 
$t\mapsto
\overline{\beta}_{h_t,g}
\overline{\beta}_{h,h_t}\alpha_t
\in C^{\infty}(\Sigma^gM)$ is a real 
analytic family of~$L^2$-normalized 
eigenspinors of~$D^{g,h_t}$. 
The family $t\mapsto\psi_t$ is obtained 
from this family by a 
unitary transformation as above and therefore 
is real analytic. 
Thus the derivative of~$F_{\psi}^{ev}$ at~$(h,x)$ in 
the direction~$k-h$ exists and is given 
by~$\frac{d\psi_t}{dt}|_{t=0}(x)$. 
In order to show continuity of the derivative 
let~$(k_j)_{j\in\mN}$ be a sequence in 
$V_{f_1...f_s}\cap V$ 
such that $k_j\to h$ with respect to the 
$C^1$-topology as $j\to\infty$. 
After deleting finitely many of the~$k_j$ 
we may assume that for every~$j\in\mN$ and for every 
$t\in[0,1]$ the tensor field $k_{j,t}:=k_j+t(k-h)$ 
is a Riemannian metric on~$M$. 
Then we write 
$\psi^{k_j}_t:=
F_{\psi}(k_{j,t})$, 
$h_t:=h+t(k-h)$ and 
$\psi^h_t:=F_{\psi}(h_t)$. 
For all~$t\in[0,1]$ and for 
all~$j\in\mN$ we have 
\begin{align*}
\Re(\psi^{k_j}_t,i\psi)_{L^2}=0,
\quad 
(\psi^{k_j}_t,J\psi)_{L^2}=0,\quad 
(\psi^{k_j}_t,\psi^{k_j}_t)_{L^2}=1\\
\Re(\psi^h_t,i\psi)_{L^2}=0,\quad 
(\psi^h_t,J\psi)_{L^2}=0,\quad 
(\psi^h_t,\psi^h_t)_{L^2}=1.
\end{align*}
By taking the derivative at $t=0$ 
we obtain that for all $j\in\mN$ 
\begin{equation}
\label{F_psi_proof4}
\frac{d\psi^{k_j}_t}{dt}\big|_{t=0}
\in\ker(D^g-\lambda)^{\perp},\quad 
\frac{d\psi^h_t}{dt}\big|_{t=0}
\in\ker(D^g-\lambda)^{\perp}.
\end{equation}
For all $t\in[0,1]$ and for all 
$j\in\mN$ we have 
\begin{displaymath}
(D^{g,k_{j,t}}-\lambda^{k_j}_t)\psi^{k_j}_t=0,
\quad
(D^{g,h_t}-\lambda^h_t)\psi^h_t=0.
\end{displaymath}
By taking the derivative at $t=0$ we obtain 
for all $j\in\mN$ 
\begin{equation}
\label{F_psi_proof5}
(D^g-\lambda)\frac{d\psi^{k_j}_t}{dt}\big|_{t=0}
=-\Big(\frac{d}{dt}D^{g,k_{j,t}}\big|_{t=0}
-\frac{d\lambda^{k_j}_t}{dt}\big|_{t=0}
\Big)\psi
\end{equation}
and analogously for~$h$ instead of~$k_j$. 
Since the formulas~(\ref{dirac_derivative_conform}) 
and~(\ref{lambda_derivative_conform}) for the 
derivatives of the Dirac operator and of the 
eigenvalue contain derivatives of the metric of 
order at most~$1$ and since $k_j\to h$ 
in the $C^1$-topology we obtain 
\begin{displaymath}
\Big\|
\Big(\frac{d}{dt}D^{g,h_{t}}\big|_{t=0}
-\frac{d\lambda^{h}_t}{dt}\big|_{t=0}
\Big)\psi
-\Big(\frac{d}{dt}D^{g,k_{j,t}}\big|_{t=0}
-\frac{d\lambda^{k_j}_t}{dt}\big|_{t=0}
\Big)\psi
\Big\|_{C^0(\Sigma^gM)}\to0
\end{displaymath}
as $j\to\infty$. 
By (\ref{F_psi_proof5}) we conclude 
\begin{equation}
\label{F_psi_proof6}
\Big\|(D^g-\lambda)
\Big(
\frac{d\psi^{k_j}_t}{dt}\big|_{t=0}
-\frac{d\psi^h_t}{dt}\big|_{t=0}
\Big)\Big\|_
{C^0(\Sigma^gM)}\to0
\end{equation}
as $j\to\infty$. 
For $i=0,1$ we define 
\begin{displaymath}
U_i:=C^i(\Sigma^gM)\cap(\ker(D^g-\lambda))^\perp.
\end{displaymath}
and equip this space with the $C^i$-norm. 
Then the operator 
\begin{displaymath}
(D^g-\lambda)|_{U_1}:\quad U_1\to U_0
\end{displaymath}
is bounded and bijective. 
Thus its inverse is also bounded and 
from (\ref{F_psi_proof4}) 
and (\ref{F_psi_proof6}) 
we obtain 
\begin{displaymath}
\Big\|
\frac{d\psi^{k_j}_t}{dt}\big|_{t=0}
-\frac{d\psi^h_t}{dt}\big|_{t=0}
\Big\|_
{C^1(\Sigma^gM)}\to0
\end{displaymath}
as $j\to\infty$. 
Let $(x_j)_{j\in\mN}$ be a sequence in $M$ 
with $x_j\to x$ as $j\to\infty$. 
We obtain 
$\frac{d\psi^{k_j}_t}{dt}|_{t=0}(x_j)
\to\frac{d\psi^h_t}{dt}|_{t=0}(x)$ 
and the third assertion follows. 
\end{proof}

\begin{remark}
Let $m\in\mN\setminus\{0\}$ and 
assume that $g\in S_m(M)$, not necessarily 
$g\in S_0(M,\Theta)$. 
There exists an open neighborhood $V\subset[g]$ 
of~$g$ in~$[g]$ with analogous properties 
as the neighborhood~$V$ in Lemma~\ref{lemma_F_psi}. 
Namely for the proof of~(\ref{F_psi_proof}) 
we use that for all $h\in[g]$ we have 
$\dim\ker(D^g)=\dim\ker(D^h)$. 
\end{remark}

\begin{lemma}
\label{lemma_Nm_open}
Let $M$ be a closed spin manifold of dimension~$2$ 
or~$3$ and let~$m\in\mN\setminus\{0\}$. 
Then $S_0(M,\Theta)\cap N_m(M)$ is open in~$R(M)$. 
Furthermore for every $g\in R(M)$ the subset 
$N_m(M)\cap[g]$ is open in~$[g]$. 
\end{lemma}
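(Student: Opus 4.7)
The plan is to combine the continuous evaluation map from Lemma \ref{lemma_F_psi} with the quaternionic structure $J$ and compactness of $M$. The cleanest first step, which collapses the statement to the simple-eigenvalue case, is a dimension count. If $g \in S_0(M,\Theta) \cap N_m(M)$ and $i \in \{-m,\dots,m\}\setminus\{0\}$, the evaluation map $\ker(D^g - \lambda_i(g)) \to \Sigma^g_x M$ must be injective at every $x \in M$, because by hypothesis every nonzero element of the eigenspace is nowhere vanishing. Since $\dim_{\mC} \Sigma_n = 2$ for $n \in \{2,3\}$ and every nontrivial eigenspace of $D^g$ has even positive complex dimension, $\lambda_i(g)$ must be simple. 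Hence $S_0(M,\Theta) \cap N_m(M) \subset S_m(M)$, and the same argument gives $N_m(M) \cap [g] \subset S_m(M)$. Therefore Lemma \ref{lemma_F_psi} and its conformal-class version (stated in the remark that follows it) both apply.

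The second step is a linear-algebra observation about $J$. If $\varphi \in \Sigma^g_x M$ is nonzero, then $\varphi$ and $J\varphi$ are $\mC$-linearly independent, since $J\varphi = c\varphi$ would give $\varphi = -J^2\varphi = -\overline{c}J\varphi = -|c|^2\varphi$, forcing $\varphi = 0$. Consequently, whenever $\varphi \in C^{\infty}(\Sigma^gM)$ is nowhere vanishing, so is every nonzero element of $\spann_{\mC}\{\varphi, J\varphi\}$.

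To conclude, fix $g \in S_0(M,\Theta) \cap N_m(M)$ and let $V$ be the open neighborhood of $g$ from Lemma \ref{lemma_F_psi}, shrunk so that $V \subset S_0(M,\Theta) \cap S_m(M)$. For each $i \in \{-m,\dots,m\}\setminus\{0\}$ choose an $L^2$-unit eigenspinor $\psi_i$ of $\lambda_i(g)$; by hypothesis it is nowhere zero on the compact manifold $M$. Since $F^{ev}_{\psi_i}: V \times M \to \Sigma^gM$ is continuous and the open set $\{(h,x) : F^{ev}_{\psi_i}(h,x) \neq 0\}$ contains the compact subset $\{g\} \times M$, there is an open neighborhood $W_i \subset V$ of $g$ such that $F_{\psi_i}(h)$ is nowhere vanishing for every $h \in W_i$. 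From the construction in the proof of Lemma \ref{lemma_F_psi} one has $F_{J\psi_i}(h) = JF_{\psi_i}(h)$, and by simplicity of $\lambda_i(h)$ we get $\ker(D^{g,h}-\lambda_i(h)) = \spann_{\mC}\{F_{\psi_i}(h), JF_{\psi_i}(h)\}$; by the second step every nonzero spinor in this space is nowhere vanishing. Because $\overline{\beta}_{g,h}$ is a pointwise positive multiple of the fibrewise isometry $\beta_{g,h}$, nowhere-vanishing transfers to eigenspinors of $D^h$. Thus $W := \bigcap_i W_i$ is an open neighborhood of $g$ contained in $S_0(M,\Theta) \cap N_m(M)$. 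The second assertion is proved identically inside $[g]$ using the conformal-class analogue of Lemma \ref{lemma_F_psi}; note that no $S_0$-condition is needed there because $\dim \ker D^h = \dim \ker D^g$ for all $h \in [g]$. The only nontrivial point in the whole argument is the initial dimension count, after which everything reduces to uniform continuity plus compactness.
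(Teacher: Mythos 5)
Your proof is correct and follows essentially the same route as the paper: the dimension count forcing simplicity, the continuity of $F^{ev}_{\psi_i}$ from Lemma \ref{lemma_F_psi} combined with compactness of $M$ to get an open neighborhood on which $F_{\psi_i}(h)$ is nowhere zero, and simplicity plus the quaternionic structure to pass from one eigenspinor to all eigenspinors of $\lambda_i(h)$. You merely make explicit a few details the paper leaves implicit (the $\mC$-independence of $\varphi$ and $J\varphi$, and the transfer of non-vanishing through $\overline{\beta}_{g,h}$), which is fine.
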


\begin{proof}
Let $g\in S_0(M,\Theta)\cap N_m(M)$. 
Then $g\in S_0(M,\Theta)\cap S_m(M)$. 
We choose an open neighborhood $V\subset R(M)$ 
of~$h$ as in Lemma~\ref{lemma_F_psi}. 
Then for every number $i\in\{-m,...,m\}\setminus\{0\}$ 
we choose an~$L^2$-normalized 
eigenspinor~$\psi_i$ of~$D^h$ corresponding 
to~$\lambda_i(h)$ and we define the map 
$F_{\psi_i}$: $V\to C^{\infty}(\Sigma^hM)$ 
as in Lemma~\ref{lemma_F_psi}. 
Since~$F_{\psi_i}^{ev}$ is continuous 
and the complement of 
the zero section is open in~$\Sigma^gM$ 
it follows that for every~$i$ the subset 
\begin{displaymath}
A_{\psi_i}:=\{h\in V|\,F_{\psi_i}(h)
\textrm{ is nowhere zero}\}
\end{displaymath}
is an open neighborhood of~$g$ in~$R(M)$. 
We observe that for every~$i$ we have 
\begin{displaymath}
A_{\psi_i}
\subset
\{h\in V|\textrm{ all eigenspinors to }\lambda_i(h)
\textrm{ are nowhere zero}\}
\end{displaymath}
since $\lambda_i(h)$ 
is a simple eigenvalue of~$D^{g,h}$ and thus 
the zero sets 
of all eigenspinors corresponding to~$\lambda_i(h)$ 
coincide. 
Then the intersection of the 
subsets~$A_{\psi_i}$, 
where $i\in\{-m,...,m\}\setminus\{0\}$, 
is an open neighborhood of~$g$ in~$R(M)$ and 
is contained in $S_0(M,\Theta)\cap N_m(M)$. 
The first assertion follows. 
The proof of the second assertion is analogous. 
\end{proof}

The strategy for the proof of density in 
Theorem~\ref{theorem_no_zeros} is based 
on the following remark.

\begin{remark}
\label{remark_transverse}
Let $A\subset\Sigma^gM$ be the zero section.
The dimension of the total space $\Sigma^gM$ of 
the spinor bundle is 
\begin{displaymath}
  \dim\Sigma^gM=n+2^{1+[n/2]}>2n=\dim M+\dim A,
\end{displaymath}
and thus a map $f$: $M\to\Sigma^gM$ is transverse 
to~$A$ if and only if we have~$f^{-1}(A)=\emptyset$.
\end{remark}

Therefore in order to prove that 
$S_0(M,\Theta)\cap N_m(M)$ is dense in~$R(M)$ 
and that for every $g\in R(M)$ the subset 
$N_m(M)\cap[g]$ is dense in~$[g]$ 
we would like to apply 
Theorem~\ref{param_transvers}. 
Our aim is then to show that a suitable 
restriction of 
the map~$F_{\psi}^{ev}$ defined as in 
Lemma~\ref{lemma_F_psi} 
is transverse to the zero section.

Let $p\in M$ with $\psi(p)=0$. 
We have a canonical decomposition of the 
tangent space
\begin{displaymath}
  T_{\psi(p)}\Sigma^gM\cong \Sigma^g_pM\oplus T_pM
\end{displaymath}
and thus
\begin{displaymath}
  dF_{\psi}^{ev}|_{(g,p)}:\quad
  T_gV_{f_1...f_s}\oplus T_pM\to\Sigma^g_pM
  \oplus T_pM.
\end{displaymath}
For a given $h\in V_{f_1...f_s}$ we will 
write $g_t=g+t(h-g)$ and 
\begin{displaymath}
\psi_t:=F_{\psi}(g_t),\quad 
\frac{d\psi_t(x)}{dt}\big|_{t=0}
:=\pi_1(dF_{\psi}^{ev}|_{(g,x)}(h-g,0)),
\end{displaymath}
where~$\pi_1$ is the projection onto the first 
summand. 
Then it follows that
\begin{equation}
  \label{derivative_of_eigenspinor_equation}
  0=\big(\frac{d}{dt}D^{g,g_t}\big|_{t=0}
  -\frac{d\lambda_t}{dt}\big|_{t=0}\big)\psi
  +\big(D^g-\lambda\big)\frac{d\psi_t}{dt}
  \big|_{t=0}.
\end{equation}

\begin{remark}
\label{remark_real_basis}
Let $\varphi\in\Sigma^g_pM$ and $X$, $Y\in T_pM$. If 
one polarizes the identity
\begin{displaymath}
  \langle X\cdot\varphi,X\cdot\varphi\rangle
  =g(X,X)\langle\varphi,\varphi\rangle,
\end{displaymath}
then one obtains
\begin{equation}
  \label{Xgamma_Ygamma}
  \Re\langle X\cdot\varphi,Y\cdot\varphi\rangle
  =g(X,Y)\langle\varphi,\varphi\rangle.
\end{equation}
Since Clifford multiplication with vectors is 
antisymmetric, we obtain 
$\Re\langle X\cdot\varphi,\varphi\rangle=0$. 
Let $\varphi\neq0$ and let 
$(e_i)_{i=1}^n$ be an orthonormal basis of~$T_pM$. 
It follows that for $n=2$ the spinors
\begin{displaymath}
  \varphi,\,e_1\cdot\varphi,\,e_2\cdot\varphi,\,
  e_1\cdot e_2\cdot\varphi
\end{displaymath}
form an orthogonal basis of $\Sigma^g_pM$ with respect 
to the real scalar product $\Re\langle.,.\rangle$. 
Similarly for $n=3$ the spinors 
\begin{displaymath}
\varphi,\,e_1\cdot\varphi,\,e_2\cdot\varphi,\,
e_3\cdot\varphi
\end{displaymath}
form an orthogonal basis of $\Sigma^g_pM$ with respect 
to $\Re\langle.,.\rangle$.
\end{remark}

The following rather long lemma is the most important step in showing 
that a suitable restriction of $F^{ev}_{\psi}$ is transverse to the 
zero section.

\begin{lemma}
\label{lemma_transverse}
Let $M$ be a closed connected spin manifold 
of dimension~$2$ or~$3$. 
Let $m\in\mN\setminus\{0\}$, 
let $g\in S_m(M)$ and let 
$\lambda\in\{\lambda_{-m}(g),...,
\lambda_m(g)\}$. 
Let~$\psi$ be an~$L^2$-normalized 
eigenspinor of~$D^g$ corresponding to~$\lambda$ 
and let $p\in M$ with $\psi(p)=0$. 
Then there exist $f_1,...,f_4\in C^{\infty}(M,\mR)$ 
such that the map 
$F_{\psi}^{ev}$: $V_{f_1...f_4}\times M\to\Sigma^gM$ 
satisfies
\begin{displaymath}
  \pi_1(dF_{\psi}^{ev}|_{(g,p)}(T_gV_{f_1...f_4}
  \oplus\{0\}))=\Sigma^g_pM.
\end{displaymath}
\end{lemma}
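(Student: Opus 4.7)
The plan is to show that the $\mR$-linear map $T\colon C^{\infty}(M,\mR)\to\Sigma^g_pM$, $T(f):=\pi_1\bigl(dF_{\psi}^{ev}|_{(g,p)}(fg,0)\bigr)$, is surjective; once this is known, any $f_1,\ldots,f_4\in C^{\infty}(M,\mR)$ for which $T(f_1),\ldots,T(f_4)$ form a basis of the $4$-dimensional real space $\Sigma^g_pM$ will yield the desired $V_{f_1\ldots f_4}$. First I would derive an integral formula for $T(f)$. For the conformal family $g_t=(1+tf)g$, the derivative $\eta:=\tfrac{d\psi_t}{dt}|_{t=0}$ satisfies $(D^g-\lambda)\eta=\tfrac{\lambda}{2}(f-c_f)\psi+\tfrac{1}{4}\grad^g(f)\cdot\psi$ by~(\ref{derivative_of_eigenspinor_equation}),~(\ref{dirac_derivative_conform}),~(\ref{lambda_derivative_conform}), where $c_f:=\int_M f|\psi|^2\,\dv^g$, and differentiating the normalization conditions in the style of the proof of Lemma~\ref{lemma_F_psi} forces $\eta\perp\ker(D^g-\lambda)$. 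Applying (\ref{greens_function}), the $\psi$ piece reduces by (\ref{green_function2}) to $\tfrac{\lambda}{2}\int f\langle\psi,G^g_{\lambda}(\cdot,p)\varphi\rangle\,\dv^g$, while the term $\grad^g(f)\cdot\psi=(D^g-\lambda)(f\psi)$ contributes $\langle f(p)\psi(p)-P(f\psi)(p),\varphi\rangle$, which vanishes because $\psi(p)=(J\psi)(p)=0$ and $\ker(D^g-\lambda)=\spann_{\mC}\{\psi,J\psi\}$. So $\langle\eta(p),\varphi\rangle=\tfrac{\lambda}{2}\int_M f\langle\psi,G^g_{\lambda}(\cdot,p)\varphi\rangle\,\dv^g$. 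If $T$ were not surjective, some $\varphi\in\Sigma^g_pM\setminus\{0\}$ would satisfy $\Re\langle\eta(p),\varphi\rangle=0$ for every $f$, and since $\lambda\neq0$ this would yield $\Re\langle\psi(x),G^g_{\lambda}(x,p)\varphi\rangle=0$ for all $x\in M\setminus\{p\}$.

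Next I would transfer this identity to the Bourguignon-Gauduchon trivialization at $p$ and expand. Setting $\gamma:=\beta^{-1}\varphi\in\Sigma_n$, $\tilde\psi:=A\psi$, and letting $\tilde G$ denote $G^g_{\lambda}(\cdot,p)\varphi$ in the trivialization, Theorem~\ref{theorem_green} identifies the first two terms of $\tilde G$ with those of Corollary~\ref{coroll_green_asymp}. Since $\tilde\psi(0)=0$, Taylor expansion gives $\tilde\psi(x)=\sum_i x_iA_i+O(|x|^2)$ with $A_i\in\Sigma_n$. Substituting into $\Re\langle\tilde\psi,\tilde G\rangle\equiv0$ and equating coefficients, the leading $|x|^{2-n}$ (or logarithmic, for $n=2$) singularity forces the symmetric part $\Re\langle A_i,E_j\cdot\gamma\rangle+\Re\langle A_j,E_i\cdot\gamma\rangle=0$, and the next order, present precisely because $\lambda\neq0$, yields $\Re\langle A_i,\gamma\rangle=0$ for every $i$. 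Evaluating $D^g\psi=\lambda\psi$ at $p$ via~(\ref{dirac_triv}) (using $B^j_i(0)=\delta^j_i$ and $\widetilde{\Gamma}^k_{ij}(0)=0$) gives the further constraint $\sum_i E_i\cdot A_i=0$. Expanding each $A_i$ in the $\Re$-orthogonal basis from Remark~\ref{remark_real_basis} and using the Clifford relations $E_iE_j=-E_jE_i$, the three systems become overdetermined in dimensions $2$ and $3$ and force $A_i=0$ for all $i$; hence $d\psi(p)=0$.

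The same mechanism would then be applied inductively, using the higher-order corrections to $\tilde G$ produced in the proof of Theorem~\ref{theorem_green} together with the higher-order jets of $D^g\psi=\lambda\psi$ at $p$, to show that all partial derivatives of $\psi$ vanish at $p$. The Schr\"odinger-Lichnerowicz formula~(\ref{schroed_lichn}) then lets one apply Theorem~\ref{unique_cont} to $(D^g)^2-\lambda^2=\nabla^*\nabla+\scal^g/4-\lambda^2$ on the connected manifold $M$, yielding $\psi\equiv0$ and contradicting $\|\psi\|_{L^2}=1$; thus $T$ is surjective. The hardest step will be this induction: one must check at each order that the constraints obtained by expanding $\Re\langle\tilde\psi,\tilde G\rangle=0$, combined with the successive jets of the Dirac equation, remain compatible with the overdetermined Clifford structure peculiar to dimensions $2$ and $3$, thereby killing every higher-order coefficient of $\tilde\psi$.
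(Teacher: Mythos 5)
Your overall strategy coincides with the paper's: assume surjectivity fails for some $\varphi\neq0$, use (\ref{greens_function}) and (\ref{dirac_derivative_conform}) to obtain $\Re\langle\psi,G^g_{\lambda}(\cdot,p)\varphi\rangle\equiv0$ on $M\setminus\{p\}$, pass to the Bourguignon--Gauduchon trivialization, show by induction on the order that all covariant derivatives of $A\psi$ vanish at $0$, and conclude via the Schr\"odinger--Lichnerowicz formula and Aronszajn's unique continuation theorem. That is exactly the paper's architecture, and the reduction to $\Re\langle\psi,G^g_{\lambda}(\cdot,p)\varphi\rangle\equiv0$ is carried out correctly.

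However, there is a concrete error in your first-order extraction for $n=3$. You claim that after the symmetry condition $\Re\langle A_i,E_j\cdot\gamma\rangle+\Re\langle A_j,E_i\cdot\gamma\rangle=0$ at the leading $|x|^{-1}$ order, ``the next order, present precisely because $\lambda\neq0$, yields $\Re\langle A_i,\gamma\rangle=0$''. In dimension $3$ this is false as stated: the $O(1)$ coefficient of $\Re\langle\tilde\psi,\tilde G\rangle$ mixes the pairing of the first Taylor coefficients $A_i$ against $\frac{\lambda}{4\pi|x|}\gamma$ with the pairing of the \emph{second} Taylor coefficients of $\tilde\psi$ against the leading singularity $-\frac{1}{4\pi|x|^3}x\cdot\gamma$. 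Because the Green's function has no logarithm in odd dimension, there is no clean order separation as in $n=2$, and one cannot isolate $\Re\langle A_i,\gamma\rangle$ without further input. The paper resolves this by combining the coefficients of $x_1^{k_1+2}x_2^{k_2}x_3^{k_3}$, $x_1^{k_1}x_2^{k_2+2}x_3^{k_3}$, $x_1^{k_1}x_2^{k_2}x_3^{k_3+2}$ after multiplying by judicious factorials, and then using the Taylor jets of \emph{both} the Dirac equation (\ref{taylor_dim3_2}) and the second-order equation coming from $\nabla^*\nabla$ via (\ref{schroed_lichn}) and (\ref{laplacian_local}) (that is, (\ref{taylor_dim3_3})) to cancel the interfering $(r+1)$st-order terms. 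Your sketch mentions ``jets of $D^g\psi=\lambda\psi$'' but does not bring in the Laplacian jets, which are essential.

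Beyond this, the proposal deliberately leaves the inductive step unverified (``the same mechanism would then be applied inductively\dots the hardest step will be this induction''). That induction is the mathematical substance of the lemma: one must show at each order $r$ that the coefficient systems, together with (\ref{taylor_dim2_3})/(\ref{taylor_dim3_4}) from the symmetry relation, the antisymmetry relation from the Dirac jet (\ref{taylor_dim2_4})/(\ref{taylor_dim3_5}), and --- for $n=3$ --- the Laplacian jets, force every coefficient $a_{j_1,\ldots,j_r,i}$ and $b_{j_1,\ldots,j_r}$ to vanish. As written, the proposal is a correct plan but is missing the argument that makes it work, and for $n=3$ the explicit step that it does present is incorrect.
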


\begin{proof}
Assume that the claim is wrong. 
Then there is $\varphi\in\Sigma^g_pM\setminus\{0\}$ 
such that for all $f\in C^{\infty}(M,\mR)$ we have 
\begin{displaymath}
  0=\Re\langle\pi_1(dF_{\psi}^{ev}|_{(g,p)}(fg,0)),\varphi\rangle
  =\Re\langle\frac{d\psi_t}{dt}\big|_{t=0}(p),\varphi\rangle.
\end{displaymath}
From the formula (\ref{greens_function}) for 
Green's function it follows that
\begin{displaymath}
  0=\Re \int_{M\setminus\{p\}} \big\langle \big(D^g-\lambda\big)\frac{d\psi_t}{dt}\big|_{t=0}, G^g_{\lambda}(.,p)\varphi \big\rangle \dv^g 
  +\Re \big\langle P\big(\frac{d\psi_t}{dt}\big|_{t=0}\big)(p), \varphi \big\rangle.
\end{displaymath}
Since $\lambda$ is a simple eigenvalue, all spinors in $\ker(D^g-\lambda)$ 
vanish at $p$. 
Thus the last term vanishes. 
By~(\ref{green_function2}) 
and~(\ref{derivative_of_eigenspinor_equation}) 
we have
\begin{align*}
0&=-\Re \int_{M\setminus\{p\}}
\big\langle \big(\frac{d}{dt}D^{g,g_t}\big|_{t=0}
-\frac{d\lambda_t}{dt}\big|_{t=0}\big)\psi,
G^g_{\lambda}(.,p)\varphi \big\rangle \dv^g\\
&=-\Re \int_{M\setminus\{p\}} \big\langle
\frac{d}{dt}D^{g,g_t}\big|_{t=0}\psi,
G^g_{\lambda}(.,p)\varphi \big\rangle \dv^g
\end{align*}
for all $f\in C^{\infty}(M,\mR)$. 
If we use the formula~(\ref{dirac_derivative_conform}) 
for the derivative of the Dirac operator and
\begin{displaymath}
  \grad^g(f)\cdot\psi=(D^g-\lambda)(f\psi)
\end{displaymath}
it follows that
\begin{align*}
0&=\frac{1}{2}\,\Re \int_{M\setminus\{p\}} 
\lambda f \big\langle \psi,
G^g_{\lambda}(.,p)\varphi \big\rangle \dv^g\\
&{}+\frac{1}{4}\,\Re \int_{M\setminus\{p\}}
\big\langle (D^g-\lambda)(f\psi),
G^g_{\lambda}(.,p)\varphi \big\rangle \dv^g.
\end{align*}
Using the definition of Green's function and using 
that all spinors in $\ker(D^g-\lambda)$ 
vanish at~$p$, we find that
\begin{align*}
0&=\frac{1}{2}\,\Re \int_{M\setminus\{p\}} 
\lambda f \big\langle \psi,
G^g_{\lambda}(.,p)\varphi \big\rangle \dv^g
+\frac{1}{4}\,\Re \big\langle
(f\psi)(p)-P(f\psi)(p),\varphi \big\rangle\\
&=\frac{1}{2}\,\Re \int_{M\setminus\{p\}} 
\lambda f \big\langle\psi, G^g_{\lambda}(.,p)
\varphi \big\rangle \dv^g
\end{align*}
for all $f\in C^{\infty}(M,\mR)$. 
Since~$\lambda\neq0$ it follows that 
$\Re\langle\psi,G^g_{\lambda}(.,p)\varphi\rangle$ vanishes identically on~$M\setminus\{p\}$. 

Our aim is now to conclude that all the derivatives 
of~$\psi$ at the point~$p$ vanish. 
Then by Theorem \ref{unique_cont} it follows 
that~$\psi$ is identically zero, which is a 
contradiction. 
In order to show this we choose a local 
parametrization $\rho$:~$V\to U$ of~$M$ 
by Riemannian normal coordinates, where $U\subset M$ 
is an open neighborhood of~$p$, 
$V\subset\mR^n$ is an open neighborhood of~$0$ 
and~$\rho(0)=p$. 
Furthermore let 
\begin{displaymath}
\beta:\quad \Sigma\mR^n|_V\to\Sigma^gM|_U,\quad 
A:\quad
C^{\infty}(\Sigma^gM|_U)\to 
C^{\infty}(\Sigma\mR^n|_V)
\end{displaymath}
denote the maps which send a spinor to its 
corresponding spinor in the 
Bour\-guig\-non-Gau\-du\-chon trivialization 
defined in Section~\ref{bourg-triv_section}.
We show by induction that $\nabla^r A\psi(0)=0$ for 
all~$r\in\mN$, where~$\nabla$ 
denotes the covariant derivative on~$\Sigma\mR^n$. 
The case~$r=0$ is clear. 

Let $r\geq1$ and assume that we have 
$\nabla^s A\psi(0)=0$ for all $s\leq r-1$. 
Let $(E_i)_{i=1}^n$ be the standard basis of~$\mR^n$. 
First consider the case~$n=2$. 
In the Bourguignon-Gauduchon trivialization we have
\begin{displaymath}
  A\psi(x)=\frac{1}{r!}\sum_{j_1,...,j_r=1}^2 x_{j_1}...x_{j_r}\nabla_{E_{j_1}}...\nabla_{E_{j_r}}A\psi(0)+O(|x|^{r+1})
\end{displaymath}
by Taylor's formula and
\begin{displaymath}
A (G^g_{\lambda}(.,p)\varphi)(x)
=-\frac{1}{2\pi|x|^2}x\cdot\gamma
-\frac{\lambda}{2\pi}\ln|x|\gamma+O(|x|^0)
\end{displaymath}
by Theorem \ref{theorem_green}, where 
$\gamma:=\beta^{-1}\varphi\in\Sigma_n$ is the constant 
spinor on $\mR^n$ corresponding to $\varphi$. 
It follows that
\begin{align}
\nonumber 
0&=-2\pi r!|x|^2 \Re\langle 
A (G^g_{\lambda}(.,p)\varphi)(x),A\psi(x)\rangle\\
\nonumber 
&=\sum_{i,j_1,...,j_r=1}^2 x_ix_{j_1}...x_{j_r}
\Re\langle
E_i\cdot\gamma,\nabla_{E_{j_1}}...\nabla_{E_{j_r}}
A\psi(0) \rangle\\
\label{taylor_dim2}
&{}+\lambda\sum_{j_1,...,j_r=1}^2
x_{j_1}...x_{j_r}|x|^2\ln|x|
\Re\langle\gamma,\nabla_{E_{j_1}}...\nabla_{E_{j_r}}
A\psi(0)
\rangle+O(|x|^{r+2}). 
\end{align}
For all $k_1,k_2\in\mN$ with $k_1+k_2=r+1$ 
the coefficient 
of~$x_1^{k_1}x_2^{k_2}$ must be zero. 
This coefficient is obtained from the first sum 
on the right hand side and it is equal to
\begin{displaymath}
\Re\langle E_1\cdot\gamma,
\nabla_{E_1}^{k_1-1}\nabla_{E_2}^{k_2}A\psi(0)
\rangle\frac{k_1r!}{k_1!k_2!}
+\Re\langle E_2\cdot\gamma,
\nabla_{E_1}^{k_1}\nabla_{E_2}^{k_2-1}A\psi(0)
\rangle\frac{k_2r!}{k_1!k_2!}.
\end{displaymath}
Thus if we write 
$x_1^{k_1}x_2^{k_2}=x_ix_{j_1}...x_{j_r}$ 
with $i,j_1,...,j_r\in\{1,2\}$, we obtain
\begin{align}
\nonumber
0&=\frac{r!}{k_1!k_2!}\Big(\Re\langle
E_i\cdot\gamma,\nabla_{E_{j_1}}...\nabla_{E_{j_r}}
A\psi(0) \rangle\\
\label{taylor_dim2_1}
&{}+\sum_{s=1}^r \Re\langle
E_{j_s}\cdot\gamma,\nabla_{E_i}\nabla_{E_{j_1}}...
\widehat{\nabla_{E_{j_s}}}...\nabla_{E_{j_r}}
A\psi(0) \rangle\Big)
\end{align}
for all $i,j_1,...,j_r\in\{1,2\}$, 
where the hat means that the operator is left out. 
For all $k_1,k_2\in\mN$ with $k_1+k_2=r$ the 
coefficient of~$x_1^{k_1+2}x_2^{k_2}\ln|x|$ 
in~(\ref{taylor_dim2}) must be zero. 
This coefficient is obtained from the second sum on 
the right hand side and it is equal to 
\begin{displaymath} 
\frac{\lambda r!}{k_1!k_2!}
\Re\langle\gamma,\nabla_{E_1}^{k_1}
\nabla_{E_2}^{k_2}A\psi(0)
\rangle
+\frac{\lambda r!k_2(k_2-1)}{k_1!k_2!}
\Re\langle\gamma,\nabla_{E_1}^{k_1+2}
\nabla_{E_2}^{k_2-2}
A\psi(0)\rangle.
\end{displaymath}
Using induction on $k_2$ and using that 
$\lambda\neq0$ 
we obtain for all $k_1,k_2\in\mN$ with~$k_1+k_2=r$ 
that 
\begin{displaymath}
\Re\langle\gamma,\nabla_{E_1}^{k_1}\nabla_{E_2}^{k_2}
A\psi(0)\rangle=0
\end{displaymath}
and therefore
\begin{equation}
  \label{taylor_dim2_2}
  0=\Re\langle\gamma,\nabla_{E_{j_1}}...\nabla_{E_{j_r}}A\psi(0)\rangle
\end{equation}
for all $j_1,...,j_r,i\in\{1,2\}$. 
By the equation~(\ref{taylor_dim2_2}) and by 
Remark~\ref{remark_real_basis} 
there exist $a_{j_1,...,j_r,k}$, $b_{j_1,...,j_r}\in\mR$ such that 
\begin{displaymath}
  \nabla_{E_{j_1}}...\nabla_{E_{j_r}}A\psi(0)=\sum_{k=1}^2 a_{j_1,...,j_r,k}E_k\cdot\gamma
  +b_{j_1,...,j_r}E_1\cdot E_2\cdot\gamma.
\end{displaymath}
Observe that the coefficients $a_{j_1,...,j_r,k}$ are 
symmetric in the first~$r$ indices. 
We insert this into~(\ref{taylor_dim2_1}) and we 
obtain
\begin{equation}
  \label{taylor_dim2_3}
  0=a_{j_1,...,j_r,i}+\sum_{k=1}^r a_{i,j_1,...,\widehat{j_k},...,j_r,j_k}
\end{equation}
for all $j_1,...,j_r,i\in\{1,2\}$. 
On the other hand since $\psi\in\ker(D^g-\lambda)$ 
we find using the induction hypothesis 
\begin{align}
\nonumber
0&=\lambda\nabla_{E_{j_1}}...\nabla_{E_{j_{r-1}}}
A\psi(0)\\
\nonumber
&=\nabla_{E_{j_1}}...\nabla_{E_{j_{r-1}}}
\sum_{i=1}^2 E_i\cdot\nabla_{E_i}A\psi(0)\\
\nonumber
&=\sum_{i,k=1}^2 a_{j_1,...,j_{r-1},i,k}E_i
\cdot E_k\cdot\gamma
+\sum_{i=1}^2 b_{j_1,...,j_{r-1},i}E_i\cdot E_1
\cdot E_2\cdot\gamma\\
\nonumber
&=-(a_{j_1,...,j_{r-1},1,1}+a_{j_1,...,j_{r-1},2,2})
\gamma\\
\nonumber
&{}+(a_{j_1,...,j_{r-1},1,2}
-a_{j_1,...,j_{r-1},2,1})E_1\cdot E_2\cdot\gamma\\
\label{taylor_dim2_4}
&{}+b_{j_1,...,j_{r-1},2}E_1\cdot\gamma
-b_{j_1,...,j_{r-1},1}E_2\cdot\gamma
\end{align}
for all $j_1,...,j_{r-1}\in\{1,2\}$. 
Thus $b_{j_1,...,j_r}=0$ 
for all $j_1,...,j_r\in\{1,2\}$. 
Next consider $a_{j_1,...,j_r,i}$ with fixed 
$j_1,...,j_r,i\in\{1,2\}$. 
If we have $j_k=i$ for all $k\in\{1,...,r\}$, then 
by~(\ref{taylor_dim2_3}) we know that 
$a_{j_1,...,j_r,i}=0$. 
If there exists~$k$ such that~$j_k\neq i$ it follows 
from the coefficient of $E_1\cdot E_2\cdot\gamma$ 
in~(\ref{taylor_dim2_4}) that
\begin{displaymath}
a_{i,j_1,...\widehat{j_k}...,j_r,j_k}
=a_{j_1,...,j_r,i}.
\end{displaymath}
Again (\ref{taylor_dim2_3}) yields 
$a_{j_1,...,j_r,i}=0$. 
We conclude that all $a_{j_1,...,j_r,i}$ vanish and 
that~$\nabla^r A\psi(0)=0$. 
This proves the assertion in the case~$n=2$.

Next consider $n=3$. 
In the Bourguignon-Gauduchon trivialization we have
\begin{align*}
A\psi(x)&=\frac{1}{r!}\sum_{j_1,...,j_r=1}^3
x_{j_1}...x_{j_r}\nabla_{E_{j_1}}...\nabla_{E_{j_r}}
A\psi(0)\\
&{}+\frac{1}{(r+1)!}\sum_{j_1,...,j_r,i=1}^3
x_{j_1}...x_{j_r}x_i\nabla_{E_{j_1}}...
\nabla_{E_{j_r}}\nabla_{E_i}A\psi(0) 
+o(|x|^{r+1})
\end{align*}
by Taylor's formula and
\begin{displaymath}
A(G^g_{\lambda}(.,p)\varphi)(x)
=-\frac{1}{4\pi|x|^3}x\cdot\gamma
+\frac{\lambda}{4\pi|x|}\gamma+o(|x|^{-s})
\end{displaymath}
for every $s>0$ by Theorem \ref{theorem_green}, where $\gamma$ is as above. It follows that
\begin{align}
\nonumber
0&=-4\pi r!|x|^3 \Re\langle 
A (G^g_{\lambda}(.,p)\varphi)(x),A\psi(x)\rangle\\
\nonumber
&=\sum_{i,j_1,...,j_r=1}^3 x_{j_1}...x_{j_r}x_i
\Re\langle
E_i\cdot\gamma,\nabla_{E_{j_1}}...\nabla_{E_{j_r}}
A\psi(0) \rangle\\
\nonumber
&{}+\frac{1}{r+1}\sum_{i,j_1,...,j_r,m=1}^3
x_{j_1}...x_{j_r}x_i x_m
\Re\langle E_i\cdot\gamma,\nabla_{E_{j_1}}...
\nabla_{E_{j_r}}\nabla_{E_m}A\psi(0) \rangle\\
\label{taylor_dim3}
&{}-\lambda\sum_{j_1,...,j_r=1}^3
x_{j_1}...x_{j_r}|x|^2
\Re\langle\gamma,\nabla_{E_{j_1}}...\nabla_{E_{j_r}}
A\psi(0)\rangle+o(|x|^{r+2}).
\end{align}
Analogously to the case $n=2$ we obtain 
from the first term on the right hand side
\begin{align}
\nonumber
0&=\Re\langle
E_i\cdot\gamma,\nabla_{E_{j_1}}...\nabla_{E_{j_r}}
A\psi(0) \rangle\\
\label{taylor_dim3_1}
&{}+\sum_{s=1}^r \Re\langle
E_{j_s}\cdot\gamma,\nabla_{E_i}\nabla_{E_{j_1}}...
\widehat{\nabla_{E_{j_s}}}...\nabla_{E_{j_r}}
A\psi(0) \rangle.
\end{align}
for all $j_1,...,j_r,i\in\{1,2,3\}$, where the hat 
means that the operator is left out. 
Our next aim is to obtain an analogue 
of~(\ref{taylor_dim2_2}) from the second and third 
term on the right hand side of~(\ref{taylor_dim3}). 
It is more difficult than in the case~$n=2$, 
since derivatives of both orders~$r$ and~$r+1$ appear. 
The equation~(\ref{dirac_triv}) reads
\begin{align*}
\lambda A\psi&=D^{\geucl}A\psi
+\sum_{i,j=1}^3
(B^j_i-\delta^j_i)E_i\cdot\nabla_{E_j}A\psi\\
&{}+\frac{1}{4}\sum_{i,j,k=1}^3
\widetilde{\Gamma}^k_{ij} E_i\cdot E_j\cdot E_k
\cdot A\psi.
\end{align*}
Using (\ref{B-expansion}), (\ref{gammatilde}) and that $|A\psi(x)|_{\geucl}=O(|x|^r)$ as $x\to0$ we find
\begin{displaymath}
  \lambda A\psi=D^{\geucl}A\psi+O(|x|^{r+1})
\end{displaymath}
and therefore
\begin{equation}
  \label{taylor_dim3_2}
  \nabla_{E_{j_1}}...\nabla_{E_{j_r}}D^{\geucl}
  A\psi(0)
  =\lambda\nabla_{E_{j_1}}...\nabla_{E_{j_r}}A\psi(0)
\end{equation}
for all $j_1,...,j_r\in\{1,2,3\}$. 
Using the equation (\ref{nabla_triv}) we find
\begin{displaymath}
  A\nabla^g_{e_i}\psi=\nabla_{E_i}A\psi
  +O(|x|^{r+1}),\quad
  A\nabla^g_{e_i}\nabla^g_{e_j}\psi=\nabla_{E_i}
  \nabla_{E_j}A\psi+O(|x|^r)
\end{displaymath}
for all $i,j\in\{1,2,3\}$. Since by definition $d\rho|_x^{-1}(e_i)=E_i+O(|x|^2)$ 
the second term in the local formula (\ref{laplacian_local}) for $\nabla^*\nabla$ 
vanishes at $p$ and therefore we find
\begin{displaymath}
  A\nabla^*\nabla\psi=-\sum_{i=1}^3 \nabla_{E_i}\nabla_{E_i}A\psi+O(|x|^r).
\end{displaymath}
From the Schr\"odinger-Lichnerowicz formula (\ref{schroed_lichn}) it follows that
\begin{displaymath}
  \lambda^2 A\psi-\frac{\scal}{4}A\psi
  =-\sum_{i=1}^3\nabla_{E_i}\nabla_{E_i}A\psi
  +O(|x|^r)
\end{displaymath}
and thus
\begin{equation}
  \label{taylor_dim3_3}
  \nabla_{E_{j_1}}...\nabla_{E_{j_{r-1}}}
  \sum_{i=1}^3\nabla_{E_i}\nabla_{E_i}A\psi(0)=0
\end{equation}
for all $j_1,...,j_{r-1}\in\{1,2,3\}$. 
Now recall the second and third term on the 
right hand side of (\ref{taylor_dim3})
\begin{align*}
0&=\frac{1}{r+1}\sum_{j_1,...,j_r,i,m=1}^3
x_{j_1}...x_{j_r}x_i x_m
\Re\langle
E_i\cdot\gamma,\nabla_{E_{j_1}}...\nabla_{E_{j_r}}
\nabla_{E_m}A\psi(0) \rangle\\
&{}-\lambda\sum_{j_1,...,j_r=1}^3
x_{j_1}...x_{j_r}|x|^2
\Re\langle\gamma,\nabla_{E_{j_1}}...\nabla_{E_{j_r}}
A\psi(0)\rangle
\end{align*}
and let $k_1$, $k_2$, $k_3\in\mN$ such that 
$k_1+k_2+k_3=r$. 
Then from the coefficient of 
$x_1^{k_1+2}x_2^{k_2}x_3^{k_3}$ we find
\begin{align*}
0&=\Re\langle
E_1\cdot\gamma,\nabla_{E_1}^{k_1}\nabla_{E_2}^{k_2}
\nabla_{E_3}^{k_3}\nabla_{E_1}A\psi(0)\rangle
\frac{r!}{(k_1+1)!k_2!k_3!} \hspace{2.8 em} (I)\\
&{}+\Re\langle
E_2\cdot\gamma,\nabla_{E_1}^{k_1}
\nabla_{E_2}^{k_2-1}\nabla_{E_3}^{k_3}
\nabla_{E_1}^2 A\psi(0)\rangle
\frac{r!k_2}{(k_1+2)!k_2!k_3!} \hspace{2 em} (III)\\
&{}+\Re\langle
E_3\cdot\gamma,\nabla_{E_1}^{k_1}\nabla_{E_2}^{k_2}
\nabla_{E_3}^{k_3-1}\nabla_{E_1}^2 A\psi(0)\rangle
\frac{r!k_3}{(k_1+2)!k_2!k_3!} \hspace{2 em}(IV)\\
&{}-\lambda\Re\langle\gamma,\nabla_{E_1}^{k_1}
\nabla_{E_2}^{k_2}\nabla_{E_3}^{k_3}A\psi(0)\rangle
\frac{r!}{k_1!k_2!k_3!} \hspace{8.7 em} (V)\\
&{}-\lambda\Re\langle\gamma,\nabla_{E_1}^{k_1}
\nabla_{E_2}^{k_2-2}\nabla_{E_3}^{k_3}
\nabla_{E_1}^2 A\psi(0)\rangle
\frac{r!k_2(k_2-1)}{(k_1+2)!k_2!k_3!} 
\hspace{3.4 em} (VII)\\
&{}-\lambda\Re\langle\gamma,\nabla_{E_1}^{k_1}
\nabla_{E_2}^{k_2}\nabla_{E_3}^{k_3-2}
\nabla_{E_1}^2 A\psi(0)\rangle
\frac{r!k_3(k_3-1)}{(k_1+2)!k_2!k_3!} 
\hspace{3.2 em} (VIII).\\
\end{align*}
From the coefficient of $x_1^{k_1}x_2^{k_2+2}x_3^{k_3}$ we find
\begin{align*}
0&=\Re\langle E_1\cdot\gamma,\nabla_{E_1}^{k_1-1}
\nabla_{E_2}^{k_2}\nabla_{E_3}^{k_3}
\nabla_{E_2}^2 A\psi(0)\rangle
\frac{r!k_1}{k_1!(k_2+2)!k_3!} \hspace{2 em} (II)\\
&{}+\Re\langle
E_2\cdot\gamma,\nabla_{E_1}^{k_1}\nabla_{E_2}^{k_2}
\nabla_{E_3}^{k_3}\nabla_{E_2} A\psi(0)\rangle
\frac{r!}{k_1!(k_2+1)!k_3!} \hspace{3.1 em} (I)\\
&{}+\Re\langle
E_3\cdot\gamma,\nabla_{E_1}^{k_1}\nabla_{E_2}^{k_2}
\nabla_{E_3}^{k_3-1}\nabla_{E_2}^2 A\psi(0)\rangle
\frac{r!k_3}{k_1!(k_2+2)!k_3!} \hspace{2.2 em} (IV)\\
&{}-\lambda\Re\langle\gamma,\nabla_{E_1}^{k_1-2}
\nabla_{E_2}^{k_2}\nabla_{E_3}^{k_3}\nabla_{E_2}^2
A\psi(0)\rangle
\frac{r!k_1(k_1-1)}{k_1!(k_2+2)!k_3!} 
\hspace{3.5 em} (VI)\\
&{}-\lambda\Re\langle\gamma,\nabla_{E_1}^{k_1}
\nabla_{E_2}^{k_2}\nabla_{E_3}^{k_3} A\psi(0)\rangle
\frac{r!}{k_1!k_2!k_3!} \hspace{8.7 em} (V)\\
&{}-\lambda\Re\langle\gamma,\nabla_{E_1}^{k_1}
\nabla_{E_2}^{k_2}\nabla_{E_3}^{k_3-2}
\nabla_{E_2}^2 A\psi(0)\rangle
\frac{r!k_3(k_3-1)}{k_1!(k_2+2)!k_3!} 
\hspace{3.6 em} (VIII).\\
\end{align*}
From the coefficient of $x_1^{k_1}x_2^{k_2}x_3^{k_3+2}$ we find
\begin{align*}
0&=\Re\langle
E_1\cdot\gamma,\nabla_{E_1}^{k_1-1}
\nabla_{E_2}^{k_2}\nabla_{E_3}^{k_3}
\nabla_{E_3}^2 A\psi(0)\rangle
\frac{r!k_1}{k_1!k_2!(k_3+2)!} \hspace{2 em} (II)\\
&{}+\Re\langle
E_2\cdot\gamma,\nabla_{E_1}^{k_1}
\nabla_{E_2}^{k_2-1}\nabla_{E_3}^{k_3}
\nabla_{E_3}^2 A\psi(0)\rangle
\frac{r!k_2}{k_1!k_2!(k_3+2)!} 
\hspace{2.2 em} (III)\\
&{}+\Re\langle 
E_3\cdot\gamma,\nabla_{E_1}^{k_1}\nabla_{E_2}^{k_2}
\nabla_{E_3}^{k_3}\nabla_{E_3} A\psi(0)\rangle
\frac{r!}{k_1!k_2!(k_3+1)!} \hspace{3.2 em} (I)\\
&{}-\lambda\Re\langle\gamma,\nabla_{E_1}^{k_1-2}
\nabla_{E_2}^{k_2}\nabla_{E_3}^{k_3}\nabla_{E_3}^2
A\psi(0)\rangle
\frac{r!k_1(k_1-1)}{k_1!k_2!(k_3+2)!} 
\hspace{3.5 em} (VI)\\
&{}-\lambda\Re\langle\gamma,\nabla_{E_1}^{k_1}
\nabla_{E_2}^{k_2-2}\nabla_{E_3}^{k_3}
\nabla_{E_3}^2 A\psi(0)\rangle
\frac{r!k_2(k_2-1)}{k_1!k_2!(k_3+2)!} 
\hspace{3.6 em} (VII)\\
&{}-\lambda\Re\langle\gamma,\nabla_{E_1}^{k_1}
\nabla_{E_2}^{k_2}\nabla_{E_3}^{k_3} A\psi(0)
\rangle
\frac{r!}{k_1!k_2!k_3!} \hspace{8.7 em} (V).
\end{align*}
We multiply the first equation with $\frac{(k_1+2)!k_2!k_3!}{r!}$, the second equation with $\frac{k_1!(k_2+2)!k_3!}{r!}$ 
and the third equation with $\frac{k_1!k_2!(k_3+2)!}{r!}$ and then add the multiplied equations. 
If we consider the lines with the same Roman numbers separately and use (\ref{taylor_dim3_2}), (\ref{taylor_dim3_3}), 
then we find
\begin{align*}
0&=-2\lambda\Re\langle\gamma,\nabla_{E_1}^{k_1}
\nabla_{E_2}^{k_2}\nabla_{E_3}^{k_3}A\psi(0)\rangle\\
&{}+\Re\langle
E_1\cdot\gamma,\nabla_{E_1}^{k_1}\nabla_{E_2}^{k_2}
\nabla_{E_3}^{k_3}\nabla_{E_1}A\psi(0)\rangle k_1\\
&{}+\Re\langle
E_2\cdot\gamma,\nabla_{E_1}^{k_1}\nabla_{E_2}^{k_2}
\nabla_{E_3}^{k_3}\nabla_{E_2}A\psi(0)\rangle k_2\\
&{}+\Re\langle 
E_3\cdot\gamma,\nabla_{E_1}^{k_1}\nabla_{E_2}^{k_2}
\nabla_{E_3}^{k_3}\nabla_{E_3}A\psi(0)\rangle
k_3\hspace{7 em} (I)\\
&{}-\Re\langle 
E_1\cdot\gamma,\nabla_{E_1}^{k_1}\nabla_{E_2}^{k_2}
\nabla_{E_3}^{k_3}\nabla_{E_1}A\psi(0)\rangle
k_1\hspace{7 em} (II)\\
&{}-\Re\langle 
E_2\cdot\gamma,\nabla_{E_1}^{k_1}\nabla_{E_2}^{k_2}
\nabla_{E_3}^{k_3}\nabla_{E_2}A\psi(0)\rangle
k_2\hspace{7 em} (III)\\
&{}-\Re\langle 
E_3\cdot\gamma,\nabla_{E_1}^{k_1}\nabla_{E_2}^{k_2}
\nabla_{E_3}^{k_3}\nabla_{E_3}A\psi(0)\rangle
k_3\hspace{7 em} (IV)\\
&{}-\lambda\Re\langle\gamma,\nabla_{E_1}^{k_1}
\nabla_{E_2}^{k_2}\nabla_{E_3}^{k_3}A\psi(0)\rangle
\sum_{i=1}^3(k_i+2)(k_i+1)\hspace{3 em} (V)\\
&{}+\lambda\Re\langle\gamma,\nabla_{E_1}^{k_1}
\nabla_{E_2}^{k_2}\nabla_{E_3}^{k_3}A\psi(0)
\rangle k_1(k_1-1)\hspace{6.8 em} (VI)\\
&{}+\lambda\Re\langle\gamma,\nabla_{E_1}^{k_1}
\nabla_{E_2}^{k_2}\nabla_{E_3}^{k_3}A\psi(0)
\rangle k_2(k_2-1)\hspace{6.8 em} (VII)\\
&{}+\lambda\Re\langle\gamma,\nabla_{E_1}^{k_1}
\nabla_{E_2}^{k_2}\nabla_{E_3}^{k_3}A\psi(0)
\rangle k_3(k_3-1)\hspace{6.8 em} (VIII).
\end{align*}
Therefore we obtain the analogue of (\ref{taylor_dim2_2}) namely
\begin{displaymath}
  \Re\langle \gamma,\nabla_{E_{j_1}}...\nabla_{E_{j_r}}A\psi(0)\rangle=0
\end{displaymath}
for all $j_1,...,j_r\in\{1,2,3\}$. 
Thus there exist $a_{j_1,...,j_r,k}\in\mR$ such that 
\begin{displaymath}
  \nabla_{E_{j_1}}...\nabla_{E_{j_r}}A\psi(0)=\sum_{k=1}^3 a_{j_1,...,j_r,k}E_k\cdot\gamma.
\end{displaymath}
Observe that the coefficients $a_{j_1,...,j_r,k}$ are symmetric in the first $r$ indices. 
We insert this into (\ref{taylor_dim3_1}) and we obtain
\begin{equation}
  \label{taylor_dim3_4}
  0=a_{j_1,...,j_r,i}+\sum_{k=1}^r a_{i,j_1,...,\widehat{j_k},...,j_r,j_k}
\end{equation}
for all $j_1,...,j_r,i\in\{1,2,3\}$. 
On the other hand since $\psi\in\ker(D^g-\lambda)$ 
we find using the induction hypothesis
\begin{align}
\nonumber
0&=\lambda\nabla_{E_{j_1}}...\nabla_{E_{j_{r-1}}}
A\psi(0)\\
\nonumber
&=\nabla_{E_{j_1}}...\nabla_{E_{j_{r-1}}}
\sum_{i=1}^3 E_i\cdot\nabla_{E_i}A\psi(0)\\
\nonumber
&=\sum_{i,k=1}^3 a_{j_1,...,j_{r-1},i,k}E_i
\cdot E_k\cdot\gamma\\
\label{taylor_dim3_5}
&=-\sum_{i=1}^3 a_{j_1,...,j_{r-1},i,i}\gamma
+\sum_{i,k=1\atop_{i<k}}^3
(a_{j_1,...,j_{r-1},i,k}-a_{j_1,...,j_{r-1},k,i})
E_i\cdot E_k\cdot\gamma
\end{align}
for $j_1,...,j_{r-1}\in\{1,2,3\}$. 
Consider $a_{j_1,...,j_r,i}$ with 
$j_1,...,j_r,i\in\{1,2,3\}$. 
If~$j_k=i$ for all $k\in\{1,...,r\}$ then 
by (\ref{taylor_dim3_4}) we know that 
$a_{j_1,...,j_r,i}=0$. 
If there exists $k$ such that $j_k\neq i$ it follows 
from the coefficient of $E_{j_k}\cdot E_i\cdot\gamma$ 
in~(\ref{taylor_dim3_5}) that
\begin{displaymath}
a_{i,j_1,...\widehat{j_k}...,j_r,j_k}
=a_{j_1,...,j_r,i}.
\end{displaymath}
Again (\ref{taylor_dim3_4}) yields 
$a_{j_1,...,j_r,i}=0$. 
We conclude that all $a_{j_1,...,j_r,i}$ vanish and 
that~$\nabla^r A\psi(0)=0$. 
This proves the assertion in the case $n=3$.
\end{proof}

\begin{remark}
\label{remark_higher_dim}
It is not clear how to prove this lemma 
for~$n\geq4$. 
Namely the condition 
$\Re\langle\gamma,\nabla_{E_i}A\psi(0)\rangle=0$ 
for all~$i$ leads to
\begin{displaymath}
  \nabla_{E_i}A\psi(0)=\sum_{k=1}^n 
  a_{ik}E_k\cdot\gamma+\sum_{k=1}^n 
  b_{ik}\cdot\gamma
\end{displaymath}
with $a_{ik}\in\mR$ and 
with elements $b_{ik}$ of the Clifford algebra. 
As in (\ref{taylor_dim2_1}),  
(\ref{taylor_dim3_1}) with $r=1$ 
it follows that $a_{ik}=-a_{ki}$ 
for all $i$, $k$ and furthermore
\begin{align*}
0&=\lambda A\psi(0)\\
&=\sum_{i=1}^n E_i\cdot\nabla_{E_i}A\psi(0)\\
&=2\sum_{i,k=1\atop{i<k}}^n a_{ik}E_i
\cdot E_k\cdot\gamma+\sum_{i,k=1}^n E_i
\cdot b_{ik}\cdot\gamma.
\end{align*}
But for $n\geq4$ the spinors $E_1\cdot E_2\cdot\gamma$ 
and $E_3\cdot E_4\cdot\gamma$ 
are not linearly independent in general. 
Thus we cannot conclude immediately that all 
the~$a_{ik}$ vanish.
\end{remark}

\begin{proof}[Proof of Theorem 
\ref{theorem_no_zeros}]
We prove that the subspace 
$S_0(M,\Theta)\cap N_m(M)$ is dense in~$R(M)$. 
Let~$U\subset R(M)$ be open. 
It is sufficient to show that 
$U\cap S_0(M,\Theta)\cap N_m(M)$ is not empty. 
Since by Dahl's result the subspace 
$S_0(M,\Theta)\cap S_m(M)$ is dense in~$R(M)$, 
there exists a metric~$g$ in 
$U\cap S_0(M,\Theta)\cap S_m(M)$. 
Let $V\subset R(M)$ be an open neighborhood 
of~$g$ as in Lemma~\ref{lemma_F_psi}. 
We may assume that $V\subset U\cap S_0(M,\Theta)$. 
Let~$\lambda$ be one of the 
eigenvalues $\{\lambda_{-m}(g),...,\lambda_m(g)\}$ 
of~$D^g$ 
and let~$\psi$ be an~$L^2$-normalized eigenspinor 
corresponding to~$\lambda$. 
Define 
\begin{displaymath}
  F_{\psi}:\quad V\to C^{\infty}(\Sigma^gM)
\end{displaymath}
as in Lemma \ref{lemma_F_psi}.
We show that a suitable restriction 
of~$F_{\psi}^{ev}$ is 
transverse to the zero section of~$\Sigma^gM$. 
Let~$p\in M$ with~$\psi(p)=0$. 
By Lemma~\ref{lemma_transverse} 
there exist $f_{p,1},...,f_{p,4}\in C^{\infty}(M,\mR)$ 
such that we have 
\begin{displaymath}
\pi_1(dF_{\psi}^{ev}|_{(g,p)}
(T_gV_{f_{p,1}...f_{p,4}}\oplus\{0\}))=\Sigma^g_p M.
\end{displaymath}
By continuity of~$dF_{\psi}^{ev}$ there 
exists an open neighborhood $U_p\subset M$ of~$p$ 
such that for all~$q\in U_p$ with~$\psi(q)=0$ 
we have
\begin{displaymath}
\pi_1(dF_{\psi}^{ev}|_{(g,q)}
(T_gV_{f_{p,1}...f_{p,4}}\oplus\{0\}))=\Sigma^g_q M.
\end{displaymath}
For all $p\in M$ with $\psi(p)=0$ we choose an 
open 
neighborhood~$U_p\subset M$ as above. 
Since the zero set of~$\psi$ is compact, there exist 
finitely many points $p_1$,...,$p_r\in M$ 
and there exist 
open neighborhoods~$U_{p_i}\subset M$ 
of~$p_i$ and $f_{p_i,1},...,f_{p_i,4}\in 
C^{\infty}(M,\mR)$, 
$1\leq i\leq r$, 
such that for every~$i$ we have~$\psi(p_i)=0$ and 
such that the open neighborhoods 
$U_{p_1}$,...,$U_{p_r}$ 
cover the zero set 
of~$\psi$ and such that for every $i\in\{1,...,r\}$ 
and 
for every~$q\in U_{p_i}$ with~$\psi(q)=0$ we have 
\begin{displaymath}
\pi_1(dF_{\psi}^{ev}|_{(g,q)}
(T_gV_{f_{p_i,1}...f_{p_i,4}}\oplus\{0\}))
=\Sigma^g_q M.
\end{displaymath}
We label the functions $f_{p_i,j}$ by 
$f_1,...,f_{4r}$. 
For every one of the finitely many eigenvalues 
$\{\lambda_{-m}(g),...,\lambda_m(g)\}$ of $D^g$ 
we choose an eigenspinor and we repeat this 
procedure. 
Then for every $i\in\{-m,...,m\}\setminus\{0\}$ 
we obtain functions 
$f_{i,1},...,f_{i,4r_i}$ 
as above, where~$r_i\in\mN$ might depend on~$i$. 
We label these functions by 
\begin{displaymath}
\{f_1,...,f_s\}:=
\bigcup_{i=-m\atop i\neq0}^{m}
\{f_{i,1},...,f_{i,4r_i}\}.
\end{displaymath}
We define 
\begin{displaymath}
  V_{f_1...f_s}:=\Big\{\Big(1+\sum_{i=1}^{s} t_i
  f_i\Big)g\Big|\,t_1,...,t_s\in\mR\Big\}\cap V.
\end{displaymath}
and we define $F_{\psi_i}$: 
$V_{f_1...f_s}\to C^{\infty}(\Sigma^gM)$, 
$i\in\{-m,...,m\}\setminus\{0\}$, 
as in Lemma~\ref{lemma_F_psi}. 
Then for every $i\in\{-m,...,m\}\setminus\{0\}$ 
and for every~$q\in M$ with~$\psi_i(q)=0$ we have 
\begin{displaymath}
\pi_1(dF_{\psi_i}^{ev}|_{(g,q)}
(T_gV_{f_1...f_s}\oplus\{0\}))=\Sigma^g_q M. 
\end{displaymath}
By continuity of the maps $dF_{\psi_i}^{ev}$ 
there exists an open 
neighborhood 
$W\subset V_{f_1...f_s}$ of~$g$ such that 
for every $i\in\{-m,...,m\}\setminus\{0\}$ 
and for every $(h,q)\in W\times M$ 
with $F_{\psi_i}^{ev}(h,q)=0$ we have 
\begin{displaymath}
\pi_1(dF_{\psi_i}^{ev}|_{(h,q)}
(T_hV_{f_1...f_s}\oplus\{0\}))=\Sigma^g_q M.
\end{displaymath}
It follows that the restrictions of all the 
maps~$F_{\psi_i}^{ev}$ to $W\times M$ 
are transverse to the zero section of~$\Sigma^gM$. 

For every $i\in\{-m,...,m\}\setminus\{0\}$ 
we define~$Y_i$ as the subset of all $h\in W$ 
such that~$F_{\psi_i}(h)$ is nowhere zero on~$M$. 
By Remark~\ref{remark_transverse} the subset~$Y_i$ 
is the subset of all $h\in W$ such 
that~$F_{\psi_i}(h)$ 
is transverse to the zero section. 
By Lemma~\ref{lemma_F_psi} the restrictions 
of the maps $F_{\psi_i}^{ev}$ to $W\times M$ are 
in $C^1(W\times M,\Sigma^gM)$ and thus the 
condition~$r>0$ in Theorem~\ref{param_transvers} 
is satisfied. 
By this theorem all the subsets~$Y_i$ 
are dense in~$W$. 
Since the zero section is closed in~$\Sigma^gM$ and 
all the 
maps~$F_{\psi_i}^{ev}$ are continuous, all the 
subsets~$Y_i$ are open in~$W$. 
Thus the intersection 
\begin{displaymath}
Y:=\bigcap_{i=-m\atop i\neq0}^{m}Y_i
\end{displaymath}
is open and dense in~$W$. 
By definition we have $Y=N_m(M)\cap W$. 
Since we have $W\subset U$ the intersection 
$S_0(M,\Theta)\cap N_m(M)\cap U$ 
is not empty. 
Thus the subset 
$S_0(M,\Theta)\cap N_m(M)$ is dense in~$R(M)$. 
The proof that for every~$g$ in~$R(M)$ 
the subset $N_m(M)\cap[g]$ is dense in~$[g]$ 
is analogous if we use that $S_m(M)\cap[g]$ 
is dense in~$[g]$ by Dahl's result.
\end{proof}

\subsection{Harmonic spinors on closed surfaces}
\label{surfaces_section}

In this section we give a counterexample showing 
that Theorem \ref{main_theorem} 
does not hold for harmonic spinors in the case 
$n=2$. 
Let $(M,g,\Theta)$ be a closed Riemannian spin 
manifold of dimension~$2$. 
The spinor bundle splits as 
\begin{displaymath}
  \Sigma^gM=\Sigma^+M\oplus\Sigma^-M
\end{displaymath}
and sections of $\Sigma^\pm M$ will be called 
positive respectively negative spin\-ors. 
The manifold~$(M,g)$ is K\"ahler and the 
bundle~$\Sigma^+M$ is canonically isomorphic 
to a holomorphic line bundle~$L$ on~$M$. 
Furthermore positive harmonic spinors can be 
identified with holomorphic sections 
of~$L$ (see e.\,g.\,\cite{hitchin:74}, 
\cite{baer.schmutz:92}). 

To every positive or negative spinor on $(M,g)$ one 
can assign a vector field 
on~$M$ by a method given in \cite{ammann:98} which we 
briefly recall. 
First we define the maps 
$\tau_{\pm}$: $\SO(2)\to\mC$ by
\begin{displaymath}
\begin{pmatrix}
 \cos t & -\sin t \\ \sin t & \cos t
\end{pmatrix}
\mapsto\exp(\pm it).
\end{displaymath}
We define a complex structure~$J$ on~$M$ such that 
for every~$p\in M$ and for every unit vector  
$X\in T_pM$ the system $(X,JX)$ is a positively 
oriented orthonormal basis of~$T_pM$. 
Then the maps 
\begin{displaymath}
\rP_{\SO}(M,g)\times_{\tau\pm}\mC\to(TM,\mp J),\quad 
[(e_1,e_2),1]\mapsto e_1
\end{displaymath}
are isomorphisms of complex line bundles and the 
following holds.

\begin{lemma}[\cite{ammann:98}]
\label{spinor_to_vector}
Let $(M,g,\Theta)$ be a Riemannian spin manifold 
of dimension $2$. Then the maps 
\begin{align*}
\Phi_{\pm}:\quad\Sigma^{\pm}M
=\rP_{\Spin}(M,g)\times_{\rho}\Sigma^{\pm}_2
&\to
\rP_{\SO}(M,g)\times_{\tau_{\mp}}\mC\cong(TM,\mp J)\\
{}[s,\sigma]&\mapsto[\Theta(s),\sigma^2]
\end{align*}
are well defined.
\end{lemma}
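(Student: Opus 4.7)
The plan is to verify directly that the assignment $[s,\sigma]\mapsto[\Theta(s),\sigma^2]$ respects the equivalence relations in both associated bundles, since well-definedness amounts exactly to this. I would fix $\tilde g\in\Spin(2)$ and compare the two representatives $(s,\sigma)$ and $(s\tilde g,\rho(\tilde g)^{-1}\sigma)$ of the same element of $\Sigma^{\pm}M$. Applying the proposed rule to the second one yields $[\Theta(s)\Theta_0(\tilde g),\,\rho(\tilde g)^{-2}\sigma^2]$, where $\Theta_0\colon\Spin(2)\to\SO(2)$ denotes the non-trivial double cover. Squaring the spinor is meaningful because $\Sigma^{\pm}_2$ is one-dimensional over $\mC$, being the $\pm 1$-eigenspace of the complex volume element $ie_1e_2\in\CCl_2$. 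Pushing the $\SO(2)$-factor $\Theta_0(\tilde g)$ into the fiber slot via the equivalence relation on the target bundle $\rP_{\SO}(M,g)\times_{\tau_{\mp}}\mC$ shows that this class equals $[\Theta(s),\sigma^2]$ if and only if the character identity
\[
\rho(\tilde g)^2\big|_{\Sigma^{\pm}_2}=\tau_{\mp}\bigl(\Theta_0(\tilde g)\bigr)
\]
holds for every $\tilde g\in\Spin(2)$; the whole proof therefore reduces to this algebraic statement.

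To establish the identity I would compute inside $\CCl_2$. Every element of $\Spin(2)\subset\CCl_2$ can be written as $\tilde g=\cos(\phi/2)+\sin(\phi/2)\,e_1e_2$ for some $\phi\in\mR$, and for such $\tilde g$ the projection $\Theta_0(\tilde g)$ is the rotation $R_\phi$ through angle $\phi$. The definition of $\Sigma^{\pm}_2$ as an eigenspace of $ie_1e_2$ forces $e_1e_2$ to act by multiplication by $\mp i$ on $\Sigma^{\pm}_2$, so
\[
\rho(\tilde g)\big|_{\Sigma^{\pm}_2}=\cos(\phi/2)\mp i\sin(\phi/2)=e^{\mp i\phi/2},
\]
whose square is $e^{\mp i\phi}=\tau_{\mp}(R_\phi)=\tau_{\mp}(\Theta_0(\tilde g))$. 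Substituting this into the computation of the previous paragraph makes the two images of the proposed map coincide, proving well-definedness.

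The only point requiring real attention is the bookkeeping of signs: the normalization of the equivalence relation on associated bundles, the eigenvalue convention defining $\Sigma^{\pm}_2$, and the identification $[(e_1,e_2),1]\mapsto e_1$ of $\rP_{\SO}(M,g)\times_{\tau_{\pm}}\mC$ with $(TM,\mp J)$ recalled just before the lemma must all be chosen in a compatible way. Once they are, the chain of equalities above is purely mechanical, and there is no substantive obstacle beyond this sign accounting.
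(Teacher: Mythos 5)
Your proof is correct. The paper itself gives no proof of this lemma (it is quoted from Ammann's thesis \cite{ammann:98}), so there is nothing to compare against; your argument — reducing well-definedness on the associated bundles to the character identity $\rho(\tilde g)^2|_{\Sigma^{\pm}_2}=\tau_{\mp}(\Theta_0(\tilde g))$ for $\tilde g\in\Spin(2)$, then verifying it by writing $\tilde g=\cos(\phi/2)+\sin(\phi/2)\,e_1e_2$ and using that $e_1e_2$ acts as $\mp i$ on $\Sigma^{\pm}_2$ while $\Theta_0(\tilde g)$ is the rotation by $\phi$ — is the standard verification, and your explicit flagging of the orientation and eigenvalue conventions is exactly the right caveat.
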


We denote by~$\gamma$ the genus of~$M$. 
Assume that~$\psi$ is a positive harmonic 
spinor on~$M$ and that 
$p\in M$ is a point with $\psi(p)=0$. 
After a choice of a local holomorphic chart of~$M$ 
and of a local 
trivialization of the holomorphic line 
bundle~$\Sigma^+M$ around~$p$ 
the spinor~$\psi$ corresponds locally to a 
holomorphic function. 
We define~$m_p$ as the order of the zero~$p$. 
Let~$X$ be the vector field on~$M$ assigned 
to~$\psi$ via Lemma \ref{spinor_to_vector}. It 
follows that~$X$ 
has an isolated zero at~$p$ with index equal 
to~$-2m_p$. 
Let $\chi(M)=2-2\gamma$ denote the Euler 
characteristic of~$M$. 
Denote by~$N$ the zero set of~$\psi$. 
Since~$M$ is compact, 
the set~$N$ is finite. By the Poincar\'e-Hopf 
Theorem we obtain the following result.

\begin{theorem}
\label{theorem_surfaces}
Assume that $\psi$ is a positive harmonic spinor on 
a 
closed surface $(M,g,\Theta)$ and let $N\subset M$ 
be its zero set. Then $N$ is finite and we have
\begin{displaymath}
  \sum_{p\in N}m_p=-\frac{1}{2}\chi(M)=\gamma-1.
\end{displaymath}
\end{theorem}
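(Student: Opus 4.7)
The plan is to combine the holomorphic structure of positive harmonic spinors with the Poincar\'e--Hopf index theorem applied to the vector field $X=\Phi_+(\psi)$ produced by Lemma~\ref{spinor_to_vector}.

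First I would establish that $N$ is finite. Using the identification of $\Sigma^+M$ with a holomorphic line bundle $L$, the positive harmonic spinor $\psi$ corresponds to a holomorphic section $s$ of $L$. Around any point $p\in M$, choose a holomorphic chart $z$ and a local holomorphic trivialization of $L$; then $s$ is represented by a holomorphic function $\widetilde{s}(z)$. Since $M$ is connected and $\psi\neq 0$ (otherwise $N=M$ is trivially handled by convention; but then the identity would be vacuous — so assume $\psi\not\equiv 0$), the zeros of $\widetilde{s}$ are isolated. Compactness of $M$ then forces $N$ to be finite, and the order $m_p$ of $\widetilde{s}$ at $p$ is a well-defined positive integer independent of the choice of chart and trivialization.

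Next I would compute the local index of $X=\Phi_+(\psi)$ at a zero $p\in N$. The map $\Phi_+$ squares the spinor coordinate and identifies $\Sigma^+M$ with $(TM,-J)$ via the character $\tau_-$. Working in a holomorphic trivialization adapted to $p$, if $\widetilde{s}(z)=z^{m_p}u(z)$ with $u(0)\neq 0$, then on the $(TM,-J)$ side the vector field $X$ is locally represented by $z^{2m_p}\cdot (\text{nonvanishing})$ in the complex structure $-J$. Viewed as a real vector field on the disk with the standard orientation induced by $J$, the $-J$-holomorphic expression $z^{2m_p}$ becomes $\bar z^{2m_p}$ in $J$-coordinates, whose winding number about the origin equals $-2m_p$. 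Hence $p$ is an isolated zero of $X$ with
\begin{displaymath}
\mathrm{index}_p(X) = -2m_p.
\end{displaymath}
This sign computation, keeping track of how $\tau_-$ reverses the complex structure, is the step I expect to require the most care; once the conventions of Section~\ref{surfaces_section} are fixed it is routine, but it is the only place where something non-trivial happens.

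Finally, since $N$ is precisely the zero set of $X$ (the map $\sigma\mapsto\sigma^2$ has only $0$ as preimage of $0$), the Poincar\'e--Hopf theorem yields
\begin{displaymath}
\chi(M) \;=\; \sum_{p\in N} \mathrm{index}_p(X) \;=\; -2\sum_{p\in N} m_p,
\end{displaymath}
and dividing by $-2$ together with $\chi(M)=2-2\gamma$ gives the desired equality $\sum_{p\in N}m_p = -\tfrac12\chi(M)=\gamma-1$. This completes the proof.
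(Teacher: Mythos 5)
Your proposal is correct and is essentially the same argument as in the paper: identify positive harmonic spinors with holomorphic sections of $L$ to deduce finiteness of $N$ and define the orders $m_p$, note that the vector field from Lemma~\ref{spinor_to_vector} has an isolated zero of index $-2m_p$ at each $p\in N$, and apply the Poincar\'e--Hopf theorem. The paper states the index computation without detail, whereas you spell out the sign via the orientation reversal built into $\tau_-$ and the squaring $\sigma\mapsto\sigma^2$; this is a faithful elaboration rather than a different route.
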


On every closed oriented surface $M$ 
of genus~$2$ there exists a spin structure, 
such that for every Riemannian 
metric~$g$ on~$M$ we have $\dim_{\mC}\ker(D^g)=2$ 
(see Proposition 2.3 in \cite{hitchin:74} and its 
proof). 
We take such a spin structure. 
Then by Theorem \ref{theorem_surfaces} for every 
Riemannian metric~$g$ on~$M$ every 
positive harmonic spinor of~$D^g$ vanishes 
at exactly one point. 
Thus Theorem \ref{main_theorem} does not hold for 
harmonic spinors in the case~$n=2$.

\subsection{Harmonic spinors in dimensions $4k$, $k\geq 1$}

In this section we give examples showing that 
Theorem \ref{main_theorem} does 
not hold for harmonic spinors in the 
case~$n=4k$, $k\geq 1$. 

Let~$M$ be a closed spin manifold of dimension~$4k$, 
$k\geq 1$. 
It follows from the Atiyah-Singer index theorem 
that for every Riemannian metric $g$ on $M$ we have 
\begin{equation}
\label{A_hat_lower_bound}
\dim_{\mC}\ker(D^g)\geq|\hat{A}(M)|,
\end{equation}
where~$\hat{A}(M)$ is an integer-valued invariant 
which depends on the spin-bordism class of~$M$ but 
not on the metric 
(see e.\,g.\,\cite{lawson.michelsohn:89}, 
Thm.\,III.13.10, p.\,256). 
Now for every integer~$k\geq 1$ and for every 
even integer~$d\geq 2$ let~$p$ be 
a polynomial function on~$\mC^{2k+2}$ with 
complex coefficients which is homogeneous of 
degree~$d$ and such that for every 
$z\in\mC^{2k+2}\setminus\{0\}$ with $p(z)=0$ 
we have $\nabla p(z)\neq 0$. 
We define 
\begin{displaymath}
V^{2k}(d):=\{[z_0:...:z_{2k+1}]\in\mC P^{2k+1}|\,
p(z_0,...,z_{2k+1})=0\}.
\end{displaymath}
As explained on pages 88 and 138 of 
\cite{lawson.michelsohn:89} 
the space~$V^{2k}(d)$ is a closed spin manifold of 
real dimension~$4k$ and we have 
\begin{displaymath}
\hat{A}(V^{2k}(d))=\frac{2^{-2k}d}{(2k+1)!}
\prod_{j=1}^{k}(d^2-4j^2).
\end{displaymath}
Thus we can choose~$d$ large enough such that for 
every Riemannian metric~$g$ on~$V^{2k}(d)$ the 
dimension of $\ker D^g$ is greater than 
the rank of the spinor bundle 
by (\ref{A_hat_lower_bound}). 
In particular for every Riemannian 
metric~$g$ on~$V^{2k}(d)$ 
there exists a harmonic spinor with non-empty 
zero set. 
We remark that~$V^{2k}(d)$ is connected since 
$k\geq 1$. 
This shows that Theorem~\ref{main_theorem} does 
not hold for harmonic spinors in dimensions~$4k$, 
$k\geq 1$. 


\end{document}